\numberwithin{equation}{section}
\theoremstyle{plain}
\newtheorem{proposition}{Proposition}[section]
\newtheorem{theorem}[proposition]{Theorem}		
\newtheorem*{theorem*}{Theorem}		
\newtheorem{corollary}[proposition]{Corollary}
\newtheorem{lemma}[proposition]{Lemma}
\theoremstyle{definition}
\newtheorem{definition}[proposition]{Definition}
\newtheorem{remark}[proposition]{Remark}
\newtheorem{example}[proposition]{Example}
\newcommand{\C}{\mathbb C}
\newcommand{\R}{\mathbb R}
\newcommand{\Hom}{\mathop{\rm Hom}\nolimits}
\newcommand{\Ad}{\mathop{\rm Ad}\nolimits}
\newcommand{\GL}{\mathsf{GL}}
\newcommand{\U}{\mathsf{U}}
\DeclareMathOperator{\Rep}{Rep}
\DeclareMathOperator{\id}{id}
\DeclareMathOperator{\im}{im}
\DeclareMathOperator{\grad}{grad}
\DeclareMathOperator{\Crit}{Crit}
\begin{document}


\title{Equivariant Morse theory for the norm-square of a moment map on a variety}

\author{Graeme Wilkin}
\address{Department of Mathematics,
National University of Singapore, 
Singapore 119076}
\email{graeme@nus.edu.sg}

\date{\today}

\thanks{This research was partially supported by grant number R-146-000-200-112 from the National University of Singapore. The author also acknowledges support from NSF grants DMS 1107452, 1107263, 1107367 ``RNMS GEometric structures And Representation varieties'' (the GEAR Network).}

\subjclass[2010]{Primary: 53D20; Secondary: 37B30, 55R55}

\begin{abstract}
We show that the main theorem of Morse theory holds for a large class of functions on singular spaces. The function must satisfy certain conditions extending the usual requirements on a manifold that Condition C holds and the gradient flow around the critical sets is well-behaved, and the singular space must satisfy a local deformation retract condition. We then show that these conditions are satisfied when the function is the norm-square of a moment map on an affine variety, and that the homotopy equivalence from this theorem is equivariant with respect to the associated Hamiltonian group action. An important special case of these results is that the main theorem of Morse theory holds for the norm square of a moment map on the space of representations of a finite quiver with relations.
\end{abstract}


\maketitle


\thispagestyle{empty}

\baselineskip=16pt



\section{Introduction}

Morse theory relates information about the topology of a manifold $M$ to information about the critical points of a smooth Morse function $f : M \rightarrow \R$. The ``Main theorem of Morse theory" makes this precise by describing the change in the homotopy type of the level sets of $f$ in terms of analytic data (the Morse index) around the critical points. When $f$ is a proper Morse function, the statement is as follows (see for example \cite{Bott88}).
\begin{theorem*}[Main theorem of Morse theory]\label{thm:Milnor-main-theorem}
Let $f : M \rightarrow \R$ be a $C^\infty$ proper function with nondegenerate critical points, let $e_\lambda$ denote a cell of dimension $\lambda$ and given any $a \in \R$, let $M_a = f^{-1}(-\infty, a]$. 
\begin{enumerate}
\item If there are no critical values of $f$ in $[a,b]$ then $M_b \simeq M_a$.

\item If $a < c < b$ and there is one critical value $c$ in $[a,b]$ of index $\lambda$ then $M_b \simeq M_a \cup e_\lambda$.
\end{enumerate}
\end{theorem*}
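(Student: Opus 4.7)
The plan follows the standard gradient-flow argument of Milnor. For part (1), I would fix any complete Riemannian metric on $M$ and consider $\nabla f$. Since $f$ is proper and has no critical points in the compact set $f^{-1}[a,b]$, the norm $\|\nabla f\|$ is bounded below there by some $\delta > 0$. Let $\rho$ be a smooth cutoff equal to $1$ on $f^{-1}[a,b]$ and supported in a slightly larger compact set, and consider the globally defined vector field
\[
X = -\rho \, \frac{\nabla f}{\|\nabla f\|^2}.
\]
Along any integral curve $\gamma$ one computes $\tfrac{d}{dt} f(\gamma(t)) = -\rho(\gamma(t))$, so inside $f^{-1}[a,b]$ the function $f$ decreases at unit rate. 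The time-$(b-a)$ flow of $X$ restricts to a map $M_b \to M_a$, and the obvious straight-line homotopy in the time parameter exhibits it as a deformation retraction, so $M_b \simeq M_a$.

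For part (2), the plan is local to the critical point $p$ with $f(p) = c$. By the Morse lemma there is a chart centred at $p$ in which
\[
f = c - |u|^2 + |v|^2, \qquad u \in \R^\lambda, \ v \in \R^{n-\lambda}.
\]
Choose $\varepsilon > 0$ small enough that $[c - \varepsilon, c + \varepsilon] \subset [a,b]$ and that the ellipsoid $\{|u|^2 + 2|v|^2 \leq 2\varepsilon\}$ lies inside the chart. The descending disk $e_\lambda = \{v = 0, \ |u|^2 \leq \varepsilon\}$ is a $\lambda$-cell whose boundary sphere sits in $f^{-1}(c-\varepsilon) \subset M_{c-\varepsilon}$. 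I would then construct a smooth $\mu : [0,\infty) \to \R$ with $\mu(0) > \varepsilon$, $\mu(r) = 0$ for $r \geq 2\varepsilon$, and $-1 < \mu'(r) \leq 0$, and set $F = f - \mu(|u|^2 + 2|v|^2)$ inside the chart and $F = f$ outside. Direct computation shows that $F$ is smooth, $F \leq f$ everywhere, $F$ has the same critical points as $f$ (the condition $\mu' > -1$ is what prevents new ones appearing), and $F^{-1}(-\infty, c - \varepsilon]$ is precisely $M_{c-\varepsilon}$ together with a handle $H$ that contains $e_\lambda$ in its core.

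To assemble the pieces, I would apply part (1) to $f$ on $[a, c-\varepsilon]$ to get $M_{c-\varepsilon} \simeq M_a$, and to $F$ on $[c-\varepsilon, c+\varepsilon]$ (combined with $F \equiv f$ outside the chart, so that $F^{-1}(-\infty, c+\varepsilon] = M_{c+\varepsilon}$) to get $M_b \simeq M_{c+\varepsilon} \simeq F^{-1}(-\infty, c - \varepsilon] = M_{c-\varepsilon} \cup H$. The remaining ingredient is a deformation retraction of the handle $H$ onto $e_\lambda$ fixing $M_{c-\varepsilon} \cap H$, constructed in the $(u,v)$-chart by first linearly collapsing the $v$-direction to $0$ and then radially pushing the part of the $u$-axis with $|u|^2 > \varepsilon$ down into $M_{c-\varepsilon}$. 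Composing the equivalences yields $M_b \simeq M_a \cup e_\lambda$.

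The main obstacle is the bookkeeping around the modification $F$: verifying the critical-point count, checking that $F^{-1}(-\infty, c-\varepsilon]$ has the claimed handle-plus-sublevel structure, and producing the handle retraction in a way that matches the attaching data on $\partial e_\lambda \subset M_{c-\varepsilon}$. Everything else reduces to compactness and standard ODE estimates for the gradient flow.
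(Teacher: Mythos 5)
Your proposal is correct: it is precisely the classical Milnor argument (reparametrized gradient-like vector field for the non-critical interval, and the Morse-lemma modification $F = f - \mu(|u|^2+2|v|^2)$ followed by the handle retraction for the cell attachment), which is the proof given in the references the paper cites for this statement; the paper itself quotes the theorem as background and does not reprove it. The only cosmetic caveat is that the ``straight-line homotopy in the time parameter'' gives a homotopy equivalence rather than a deformation retraction onto $M_a$ (for the latter one flows each point only for time $\max(0, f(x)-a)$), but this does not affect the conclusion $M_b \simeq M_a$. It is worth noting that the paper's own generalization to singular $Z$ (Theorem \ref{thm:main-theorem-Morse-theory}) necessarily proceeds by a different mechanism --- gradient-flow retractions, Wa\.{z}ewski's theorem, and a retraction onto the unstable set $W_C^-$ --- because the Morse lemma and the local modification $F$ are unavailable there.
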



Various generalisations of this idea have been used to (a) deduce information about the critical points of a given function from information about the topology of a manifold (for example \cite{Morse25}, \cite{Morse28}, \cite{Milnor63}, \cite{Palais63}, \cite{Smale64}, \cite{GromollMeyer69}, \cite{Conley78}, \cite{ConleyZehnder84}, \cite{Floer88}) and (b) deduce information about the topology of a manifold from information about the critical points of a function (for example \cite{Bott56}, \cite{BottSamelson58}, \cite{Smale61}, \cite{AtiyahBott83}, \cite{Kirwan84}).

Each of these results depends on proving an analog of the main theorem of Morse theory for a given class of functions. The generalisation most relevant to this paper is that of Kirwan in \cite[Sec. 10]{Kirwan84} who proved that an analogous statement holds for minimally degenerate functions on a smooth manifold (``Morse-Kirwan functions''). Kirwan also showed that the norm-square of a moment map on a smooth symplectic manifold is always minimally degenerate and used this to derive very general results about the topology of symplectic quotients.

This idea originated in the work of Atiyah and Bott \cite{AtiyahBott83}, who used the Morse theory of the Yang-Mills functional in order to inductively compute cohomological invariants of the moduli space of semistable holomorphic bundles on a compact Riemann surface. Their approach was algebraic in the sense that they used the Harder-Narasimhan stratification instead of the gradient flow stratification by the Yang-Mills functional, and the analytic details of the Morse theory were later filled in by Daskalopoulos \cite{Daskal92} and Rade \cite{Rade92}. In \cite{DWWW11}, \cite{WentworthWilkin12} and \cite{WentworthWilkin13} we continued this program for certain spaces of coupled equations in gauge theory in order to derive new results about the topology of moduli spaces of Higgs bundles and stable pairs in low rank. In these examples the analog of the Morse function is the Yang-Mills-Higgs functional, which can be interpreted as the norm-square of a moment map (cf. \cite{AtiyahBott83}, \cite{Hitchin87}, \cite{Bradlow91}). The new feature  is that the underlying space is singular and since the existing theory from \cite{AtiyahBott83} and \cite{Kirwan84} requires the underlying space to be a manifold, then a new proof of the main theorem of Morse theory is needed, which we carried out using a method specific to these particular examples in low rank. Many other interesting examples of moduli spaces (for example the quiver varieties of \cite{KronheimerNakajima90}, \cite{Nakajima94} and \cite{Nakajima12}) can also be defined as the minimum of the norm-square of a moment map on a singular space, and therefore it is of interest to develop a general approach for these examples.

The first result of this paper is Theorem \ref{thm:main-theorem-Morse-theory}, which shows that the main theorem of Morse theory holds for a large class of functions on singular spaces (functions $f : Z \rightarrow \R$ satisfying Conditions \eqref{cond:isolated-crit-values}--\eqref{cond:def-retract} below). In particular, this class of functions includes the case where $f$ is the norm-square of a moment map on an affine variety (not necessarily smooth), which extends Kirwan's results \cite[Sec. 10]{Kirwan84} from smooth varieties to varieties with singularities. An important class of examples is given by representations of a finite quiver with relations, and this paper together with \cite{wilkin-quiver-flow-hecke} completes the study of the local analysis around the critical sets with a view to using the ideas of Atiyah \& Bott and Kirwan to compute topological invariants of moduli spaces of quivers with relations in analogy with the approach of \cite{DWWW11}, \cite{WentworthWilkin12} and \cite{WentworthWilkin13}.

The results are stated as follows. Let $M$ be a Riemannian manifold and let $f : M \rightarrow \mathbb{R}$ be a smooth function. The time $t$ negative gradient flow of $f$ with initial condition $x \in M$ is denoted by $\phi(x, t)$ satisfying
\begin{equation*}
\frac{\partial}{\partial t} \phi(x, t) = - \grad f(\phi(x,t)), \quad \phi(x,0) = x .
\end{equation*}
Let $Z \subset M$ be any closed subset preserved by the gradient flow of $f$, i.e. if $x \in Z$ then $\phi(x, t) \in Z$ for all $t \in \mathbb{R}$ such that $\phi(x, t)$ is defined. It is not necessary to assume that $\phi(x, t)$ exists for all $t \in \R$, however when the initial condition is in $Z$ then we do assume local existence, continuous dependence on initial conditions and Condition \eqref{cond:flow-alternative} below. Since $Z$ is closed in $M$ then if $x \in Z$ and $\lim_{t \rightarrow \infty} \phi(x, t)$ or $\lim_{t \rightarrow - \infty} \phi(x, t)$ exists in $M$ then this limit is contained in $Z$.

Define a critical point of $f : Z \rightarrow \mathbb{R}$ to be a stationary point of this flow (i.e. a critical point in $Z$ is a critical point of $f : M \rightarrow \mathbb{R}$ that is also contained in $Z$). Let $\Crit_Z(f)$ denote the set of critical points for $f : Z \rightarrow \R$. Given a critical value $c \in \R$ and the associated critical set $C = \Crit_Z(f) \cap f^{-1}(c)$, define $W_C^+$ and $W_C^-$ to be the stable and unstable sets of $C$ with respect to the flow
\begin{align*}
W_C^+ & := \left\{ x \in Z \mid \lim_{t \rightarrow \infty} \phi(x, t) \in C \right\} \\
W_C^- & := \left\{ x \in Z \mid \lim_{t \rightarrow - \infty} \phi(x, t) \in C \right\} .
\end{align*}
Let $W_x^+$ and $W_x^-$ denote the analogous stable/unstable sets with respect to a specific critical point $x \in C$. Suppose also that $f : Z \rightarrow \R$ and the flow $\phi$ satisfy the following conditions.
\begin{enumerate}

\item \label{cond:isolated-crit-values} The critical values of $f$ are isolated.

\item \label{cond:flow-alternative} For any $a < b$ such that $f^{-1}(a) \cap \Crit_Z(f) = \emptyset = f^{-1}(b) \cap \Crit_Z(f)$ and any $x \in f^{-1}((a,b))$ either there exists $T_+ > 0$ such that $\phi(x, T_+) = a$ or $\lim_{t \rightarrow \infty} \phi(x, t)$ exists in $f^{-1}((a, b))$. Similarly, either there exists $T_- < 0$ such that $\phi(x, T_-) = b$ or $\lim_{t \rightarrow -\infty} \phi(x, t)$ exists in $f^{-1}((a,b))$.

\item \label{cond:analytic} $M$ is real analytic and $f : M \rightarrow \mathbb{R}$ is real analytic.

\item \label{cond:hyperbolicity} For each non-minimal critical point $x$, let $C = \Crit_Z(f) \cap f^{-1}(f(x))$. For each $a < f(x)$ such that there are no critical values in $[a, f(x))$ and for each neighbourhood $U \subset f^{-1}(a)$ of $W_x^- \cap f^{-1}(a)$ there exists a neighborhood $V$ of $x$ such that for each $y \in V \setminus W_C^+$ there exists $T > 0$ such that $\phi(y, T) \in U$.

\item \label{cond:def-retract} For each critical value $c$ let $C = \Crit_Z(f) \cap f^{-1}(c)$. Then there exists $\varepsilon > 0$ such that $W_C^- \cap f^{-1}(c-\varepsilon)$ has an open neighborhood $E \subset f^{-1}(c-\varepsilon)$ and a strong deformation retract $r : E \times [0,1] \rightarrow E$ of $E$ onto $W_C^- \cap f^{-1}(c-\varepsilon)$ such that (using $E_s$ to denote the image $r(E, s)$ for each $s \in [0,1]$) we have
\begin{enumerate}

\item $E_s$ is open in $f^{-1}(c-\varepsilon)$ for all $s \in [0,1)$,

\item $\overline{E}_t \subset E_s$ for all $t > s$,

\item $E_s = \bigcup_{t > s} E_t$ and $\overline{E}_s = \bigcap_{t < s} E_t$ for all $s \in (0,1)$.

\end{enumerate}

\end{enumerate}

 Denote $Z_a := \{ x \in Z \mid f(x) \leq a \}$. The following theorem is the analog of the main theorem of Morse theory for $f : Z \rightarrow \R$.

\begin{theorem}\label{thm:main-theorem-Morse-theory}
Let $M$ be a Riemannian manifold and let $f : M \rightarrow \mathbb{R}$ be a smooth function. Let $Z \subset M$ be any closed subset preserved by the gradient flow of $f$ and suppose that the restriction $f : Z \rightarrow \mathbb{R}$ satisfies the conditions \eqref{cond:isolated-crit-values}--\eqref{cond:def-retract}. 
\begin{enumerate}

\item[(a)] \label{item:main-theorem-no-critical} If there are no critical values in $[a,b]$ then $Z_a \simeq Z_b$.

\item[(b)] \label{item:main-theorem-critical} If $f^{-1}(a) \cap \Crit_Z(f) = \emptyset = f^{-1}(b) \cap \Crit_Z(f)$ and there is only one critical value $c \in (a,b)$ with associated critical set $C = \Crit_Z(f) \cap f^{-1}(c)$ then $Z_a \cup W_C^- \simeq Z_b$.

\end{enumerate}

Moreover, if a group $K$ acts on $Z$ such that (a) $f$ is $K$-invariant and (b) the deformation retract of condition \eqref{cond:def-retract} is $K$-equivariant, then these homotopy equivalences are $K$-equivariant.
\end{theorem}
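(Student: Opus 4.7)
The plan for (a) is a standard gradient flow deformation retract. For $x \in Z_b$ with $f(x) > a$, Condition \eqref{cond:flow-alternative} together with the absence of critical values in $[a,b]$ forces the exit time $\tau(x) := \inf\{t \geq 0 : f(\phi(x,t)) \leq a\}$ to be finite, since the flow limit would otherwise be a critical point at a non-existent critical value in $(a,b)$. Continuous dependence on initial conditions, combined with the strict monotonicity of $f$ along non-stationary orbits, yields that $\tau$ (extended by $0$ on $Z_a$) is continuous, so $H(x, s) = \phi(x, s \tau(x))$ is the desired strong deformation retract of $Z_b$ onto $Z_a$.

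For (b), applying part (a) above $c + \delta$ and below $c - \varepsilon$ reduces the problem to showing that $Z_{c + \delta}$ admits a strong deformation retract onto $Z_{c-\varepsilon} \cup W_C^-$, where $\varepsilon$ is as in Condition \eqref{cond:def-retract} and $\delta > 0$ is chosen so that $(c, c+\delta]$ contains no critical values. The plan is to glue three ingredients: the negative gradient flow, which drives any $x \in Z_{c+\delta} \setminus W_C^+$ down to $f^{-1}(c-\varepsilon)$ in finite time $\tau(x)$ by Condition \eqref{cond:flow-alternative} and which on $W_C^+$ converges (in infinite time) to a point of $C \subset W_C^-$; the deformation retract $r$ of Condition \eqref{cond:def-retract}, which retracts the neighborhood $E \subset f^{-1}(c-\varepsilon)$ onto $W_C^- \cap f^{-1}(c-\varepsilon)$; and the hyperbolicity Condition \eqref{cond:hyperbolicity}, which ensures that the landing point $\phi(x, \tau(x))$ lies inside any prescribed $E_s$ whenever $x$ is close enough to $W_C^+$.

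Concretely, for $x \in Z_{c+\delta} \setminus W_C^+$ I would set $\sigma(x) := \sup\{ s \in [0,1] : \phi(x, \tau(x)) \in E_s \}$; the nested family axioms of Condition \eqref{cond:def-retract} (openness of $E_s$ for $s < 1$ and $\overline{E}_t \subset E_s$ for $t > s$) make $\sigma$ well-defined and continuous on its domain, and Condition \eqref{cond:hyperbolicity} forces $\sigma(x) \to 1$ as $x \to W_C^+$. The retraction would then be built by the recipe $\Phi(x, s) = r(\phi(x, s \tau(x)), s \sigma(x))$ off $W_C^+$ and $\Phi(x, s) = \phi(x, \theta(s))$ on $W_C^+$ for a reparametrization $\theta : [0,1] \to [0, \infty]$ with $\theta(1) = \infty$, so that $\Phi(x, 1) = \lim_{t \to \infty} \phi(x, t) \in C \subset W_C^-$.

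The main obstacle is the continuity of $\Phi$ across the boundary of $W_C^+$ inside $Z_{c+\delta}$, where flow times $\tau$ blow up; this is precisely what the axiomatic structure of $\{E_s\}$ is engineered for, since $\sigma(x) \to 1$ forces the ``retract piece'' of the formula to approach $W_C^-$ uniformly as $x$ approaches $W_C^+$, matching the flow-limit piece defined on $W_C^+$ itself. For the equivariant conclusion (assuming a $K$-invariant Riemannian metric, or after $K$-averaging when $K$ is compact), $K$-invariance of $f$ makes the flow $\phi$ and hence the functions $\tau$ and $\sigma$ all $K$-equivariant; combined with the hypothesis that $r$ is $K$-equivariant, the entire construction $\Phi$ is $K$-equivariant.
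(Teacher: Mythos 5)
Your part (a) and the overall idea of combining the flow, the retract $r$ of Condition \eqref{cond:def-retract}, and the function $\sigma$ are in the spirit of the paper's argument, but the single-formula retraction proposed for part (b) has a genuine gap: it is discontinuous along $W_C^+$. Fix $x \in W_C^+$ with $f(x) > c$ and a sequence $x_n \rightarrow x$ with $x_n \notin W_C^+$. Your map sends $x_n$ at time $s=1$ to $r(\phi(x_n, \tau(x_n)), \sigma(x_n))$, which lies in $f^{-1}(c-\varepsilon)$ because $r$ preserves that level set, while it sends $x$ to $\lim_{t \rightarrow \infty} \phi(x,t) \in C \subset f^{-1}(c)$. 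Since $f$ is continuous these images cannot converge to one another, no matter how close $\sigma(x_n)$ is to $1$: having the landing point approach $W_C^- \cap f^{-1}(c-\varepsilon)$ is not the same as approaching $C$, as these are disjoint closed sets separated by the value of $f$. (A secondary defect: for $0 < s < 1$ the expression $r(\phi(x, s\tau(x)), s\sigma(x))$ is not defined, because $\phi(x, s\tau(x))$ lies strictly above level $c-\varepsilon$ and hence outside the domain $E \subset f^{-1}(c-\varepsilon)$ of $r$; and you never invoke Condition \eqref{cond:analytic}, which is what makes the infinite-time limit map on $W_C^+$ continuous in the first place, via the Lojasiewicz argument.)

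The paper avoids the jump by never sending points near $C$ all the way down to level $c-\varepsilon$. It first retracts $Z_b$ onto $Z_c$ (Proposition \ref{prop:retract-to-critical-level}, where Condition \eqref{cond:analytic} and Simon's Lojasiewicz inequality give continuity at $W_C^+$ -- here all targets lie at the single level $c$, so no jump occurs); it then retracts $Z_c$ onto $Z_{c-\varepsilon} \cup Y$ with $Y = g^{-1}([c-3\varepsilon, c-\varepsilon])$ for the auxiliary function $g = f - 2\varepsilon\sigma$, using Wazewski's theorem (Proposition \ref{prop:retract-to-Y}); and finally it retracts $Y$ onto $f^{-1}(c-\varepsilon) \cup W_C^-$ by flowing down to level $c-\varepsilon$, applying $r$, and flowing \emph{back up} to the interpolated level $f_{final}(x) = f(x) - 2\varepsilon(1-\sigma(x))$ (Proposition \ref{prop:retract-to-cell-attachment}). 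That interpolation is exactly the ingredient your construction is missing: points with $\sigma$ near $1$ barely descend in $f$, so the retraction limits to the identity on $C$ instead of jumping down to level $c-\varepsilon$.
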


Instead of directly verifying Condition \eqref{cond:def-retract} for each example, Theorem 1.1 of \cite{pflaumwilkin} shows that it is sufficient to verify the following simpler condition, which can be done for a large class of examples.
\begin{enumerate}
\item[(5$'$)] For each critical value $c$ there exists $\varepsilon > 0$ such that there are no critical values of $f$ in $[c-\varepsilon, c)$, and there is a stratification of $f^{-1}(c-\varepsilon)$ satisfying Whitney's Condition B such that $W_C^- \cap f^{-1}(c-\varepsilon)$ is a union of strata.
\end{enumerate}
Moreover, Theorem 1.1 of \cite{pflaumwilkin} shows that if a compact Lie group $K$ acts on $f^{-1}(c-\varepsilon)$ and preserves $W_C^- \cap f^{-1}(c-\varepsilon)$, then the deformation retract of Condition (5) can be made to be $K$-equivariant.

In Section \ref{sec:condition5-moment-maps} we prove that $(5')$ is satisfied in the case where $f : Z \rightarrow \R$ is the norm-square of a moment map on an affine variety. As a consequence of this, we can prove that the main theorem of Morse theory holds in this setting. 
\begin{theorem}\label{thm:main-theorem-affine-variety}
Let $G$ be a connected complex reductive group and let $V$ be a linear representation of $G$. Suppose that the action of the maximal compact subgroup $K \subset G$ is Hamiltonian with respect to the standard symplectic structure on $V$ and let $\mu : V \rightarrow \mathfrak{k}^*$ be a moment map for this action. Let $Z \subset V$ be a closed affine subvariety preserved by $G$. Then $f = \| \mu \|^2 : Z \rightarrow \mathbb{R}$ satisfies conditions \eqref{cond:isolated-crit-values}--\eqref{cond:def-retract} and therefore
\begin{enumerate}

\item[(a)] \label{item:moment-map-no-critical} If there are no critical values in $[a,b]$, then $Z_a$ is $K$-equivariantly homotopic to $Z_b$.

\item[(b)] \label{item:moment-map-critical} If $f^{-1}(a) \cap \Crit_Z(f) = \emptyset = f^{-1}(b) \cap \Crit_Z(f)$ and there is only one critical value $c \in (a,b)$ with associated critical set $C = \Crit_Z(f) \cap f^{-1}(c)$, then $Z_a \cup W_C^-$ is $K$-equivariantly homotopic to $Z_b$.

\end{enumerate} 

\end{theorem}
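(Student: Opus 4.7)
The plan is to verify each of Conditions \eqref{cond:isolated-crit-values}--\eqref{cond:def-retract} for $f = \|\mu\|^2 : Z \to \R$ and then invoke Theorem \ref{thm:main-theorem-Morse-theory}. Since $\mu$ is $K$-equivariant and the inner product on $\mathfrak{k}^*$ is $K$-invariant, $f$ is automatically $K$-invariant, so the $K$-equivariance of the resulting homotopy equivalences comes for free once the deformation retract in Condition \eqref{cond:def-retract} is produced $K$-equivariantly via the equivariant statement of Theorem 1.1 of \cite{pflaumwilkin}.

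Conditions \eqref{cond:isolated-crit-values} and \eqref{cond:analytic} are essentially immediate. Taking $M = V$, the ambient space is a real vector space, and because $\mu$ is quadratic in the coordinates of $V$, the function $f = \|\mu\|^2$ is polynomial; both $M$ and $f$ are therefore real analytic. For isolated critical values, I would use the standard characterisation of critical points of $\|\mu\|^2$: at any critical point $x$, the element $\beta := \mu(x) \in \mathfrak{k}^*$ (identified with $\mathfrak{k}$ via an invariant inner product) annihilates $x$ under the infinitesimal action, so $x$ lies in the fixed point set of the one-parameter subgroup generated by $\beta$. This forces $\|\mu(x)\|^2$ to lie in the finite collection of squared norms of the rational weight vectors $\beta$ attached to weight subspaces of $V$ that intersect $Z$.

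For Condition \eqref{cond:flow-alternative}, forward trajectories of $-\grad f$ on $V$ remain bounded (because $\|x\|^2$ grows at a controlled rate along the flow of the square of a moment map) and hence converge by the Lojasiewicz inequality applied to the real analytic function $f$; since $Z$ is closed and preserved by the flow, the limit lies in $Z$. The backward-time alternative (escape through $f = b$ in finite time, versus convergence to a higher critical value) follows by applying the same Lojasiewicz argument in reverse time together with the monotonicity of $f$. Condition \eqref{cond:hyperbolicity} is local around a critical set. Using the $G$-action I would pass to a Luna-type slice at a critical point $x$ with $\beta = \mu(x)$, decompose the slice into weight spaces for $e^{t\beta}$, and note that the Hessian of $f$ is strictly negative along the unstable weights; any $y$ near $x$ with $y \notin W_C^+$ has a nontrivial unstable component, and the ensuing gradient estimate drags $y$ into any prescribed neighbourhood of $W_x^- \cap f^{-1}(a)$ in finite time.

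The main work, and the likely obstacle, is Condition \eqref{cond:def-retract}. By Theorem 1.1 of \cite{pflaumwilkin} it suffices to establish the simpler Condition $(5')$: produce a Whitney stratification of $f^{-1}(c - \varepsilon)$ in which $W_C^- \cap f^{-1}(c - \varepsilon)$ is a union of strata. The key input is the Kirwan-Ness description of the unstable set, namely that $W_C^-$ coincides with the algebraic subset of $Z$ consisting of those points whose limit under $e^{-t\beta}$ (with $\beta$ attached to $C$) lies in $C$. Since $Z$ is a real algebraic subvariety of $V$, a Whitney stratification of $Z$ refining this algebraic subset exists by general stratification theory and restricts to a Whitney stratification of the real algebraic level set $f^{-1}(c - \varepsilon)$ in which $W_C^- \cap f^{-1}(c - \varepsilon)$ is a union of strata. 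The technical hurdle is identifying the analytic unstable set defined by the gradient flow with this algebraic description when $Z$ is singular, extending Kirwan's smooth argument to the present setting; this is precisely what is carried out in Section \ref{sec:condition5-moment-maps}. Once $(5')$ is in place, Theorem \ref{thm:main-theorem-Morse-theory} applies and yields both statements.
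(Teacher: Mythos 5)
Your overall architecture matches the paper's: check Conditions \eqref{cond:isolated-crit-values}--\eqref{cond:hyperbolicity} on the ambient representation $V$ and then restrict to the closed, flow-invariant subset $Z$ (this is Lemma \ref{lem:restrict-to-closed-subset} together with Proposition \ref{prop:conditions1-4}), reduce Condition \eqref{cond:def-retract} to Condition $(5')$ via Theorem 1.1 of \cite{pflaumwilkin}, and finally invoke Theorem \ref{thm:main-theorem-Morse-theory}. However, there is a genuine gap at the decisive step, Condition $(5')$. You assert that $W_C^-$ ``coincides with the algebraic subset of $Z$ consisting of those points whose limit under $e^{-t\beta}$ lies in $C$.'' That description is correct only for the linear model, namely the negative slice bundle $S_C^-$ of Definition \ref{def:neg-slice-bundle}, whose fibres are cut out by the condition $\lim_{t\rightarrow\infty}e^{i\mu(x)t}\cdot y=0$; it is not a description of the unstable set of the nonlinear gradient flow of $\|\mu\|^2$. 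The Kirwan--Ness--Hoskins algebraic description applies to the strata, i.e.\ to the stable sets $W_C^+$, and no analogous algebraic formula for $W_C^-$ is available. What the paper actually proves (Lemmas \ref{lem:neg-slice-analytic}--\ref{lem:unstable-analytic} and Proposition \ref{prop:condition5}) is the weaker but sufficient statement that $W_C^-$ is locally a \emph{real analytic} set near $C$: this uses Hubbard's parametrization of unstable manifolds of analytic flows, applied to the complexification of the flow so that Hartogs' theorem upgrades separate analyticity in the base and fibre variables to joint analyticity, and then transports the analyticity down to $f^{-1}(c-\varepsilon)$ using the analyticity of the finite-time flow. None of this is in your proposal, and deferring it to ``Section \ref{sec:condition5-moment-maps}'' is circular, since that section is precisely the proof you are being asked to supply.

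Two smaller soft spots. For Condition \eqref{cond:flow-alternative}, the claim that ``$\|x\|^2$ grows at a controlled rate'' does not by itself give boundedness of forward trajectories (linear growth is controlled but unbounded), and in backward time even that estimate reverses sign; the paper instead uses Sjamaar's result that $\|\mu\|^2$ is proper on each $G$-orbit, so the portion of any flow line lying in $f^{-1}([a,b])$ is contained in a compact set, after which the Lojasiewicz argument applies in both time directions. For Condition \eqref{cond:hyperbolicity}, note that $\|\mu\|^2$ is only minimally degenerate, not Morse--Bott, so the Hessian has degenerate directions transverse to $C$; the assertion that a nontrivial unstable component ``drags $y$ into any prescribed neighbourhood of $W_x^-\cap f^{-1}(a)$'' is exactly the content of the condition, and the paper obtains it from the topological conjugacy of the flow with its linearization (Hirsch--Pugh--Shub) in Kirwan's minimally degenerate local coordinates rather than from a bare weight-space decomposition.
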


\begin{remark}

A result of Kempf \cite[Lemma 1.1]{Kempf78} shows that for an affine variety $Z$ with the action of a connected reductive algebraic group $G$, there is a representation $V$ of $G$ and a $G$-equivariant isomorphism from $Z$ to a closed affine subvariety of $V$, and so the above theorem applies to any affine $G$-variety.

\end{remark}

{\bf Conditions \eqref{cond:isolated-crit-values} and \eqref{cond:flow-alternative} in the context of Morse theory on smooth spaces.} Combining the results of \cite[Sec. 10]{Palais63} and \cite{Simon83} shows that Conditions \eqref{cond:isolated-crit-values} and \eqref{cond:flow-alternative} are automatically satisfied by an analytic function satisfying the Palais-Smale Condition C. The advantage of Condition C is that it can be verified directly from the function $f$ (i.e. one does not need solve the gradient flow equations to verify Condition C), however many of the examples that we would like to study do not satisfy Condition C because the critical sets are non-compact. For example, in the setting of Theorem \ref{thm:main-theorem-affine-variety} the norm-square of a moment map on the space of representations of a quiver with an oriented cycle does not satisfy Condition C (not even equivariantly), however the norm square of a moment map on an affine variety does satisfy Conditions \eqref{cond:isolated-crit-values} and \eqref{cond:flow-alternative} above, since the gradient flow satisfies a compactness condition due to Sjamaar \cite{Sjamaar98} (see Proposition \ref{prop:conditions1-4}) which allows us to prove that Conditions \eqref{cond:isolated-crit-values} and \eqref{cond:flow-alternative} hold in this setting.

{\bf Conditions \eqref{cond:analytic} and \eqref{cond:hyperbolicity} in the context of Morse theory on smooth spaces.} Conditions \eqref{cond:analytic} and \eqref{cond:hyperbolicity} are chosen to ensure the continuity of the homotopy equivalences near the critical set (see Propositions \ref{prop:retract-to-critical-level} and \ref{prop:retract-to-cell-attachment}). These conditions impose extra structure on the maps between level sets of $f$ defined by the gradient flow (see Lemma \ref{lem:level-set-map-closed}), which allows for the gradient flow $\phi(x,t)$ to translate the deformation retract of Condition \eqref{cond:def-retract} from a neighbourhood of the unstable manifold to a neighbourhood of the critical set.

In the context of moment maps on affine varieties, it is natural to impose Condition \eqref{cond:analytic}, since the norm-square of a moment map on an affine variety is an analytic function. It is also natural to impose Condition \eqref{cond:hyperbolicity}, since this condition is satisfied by any function whose gradient flow is hyperbolic around the critical set. In particular this is true for a Morse function or a Morse-Bott function, since the flow is hyperbolic with respect to the coordinates given by the Morse Lemma (see for example \cite{HirschPughShub77}, \cite{Jost05}). Since Condition \eqref{cond:hyperbolicity} only involves the unstable set $W_C^-$ and not the stable set $W_C^+$ then it is also satisfied by a Morse-Kirwan function (see \cite[Sec. 10]{Kirwan84} and Proposition \ref{prop:conditions1-4} in this paper for more details). 

Therefore Conditions \eqref{cond:isolated-crit-values}--\eqref{cond:hyperbolicity} are natural extensions of the usual conditions needed to prove the main theorem of Morse theory for functions on a manifold (cf. \cite{Milnor63}, \cite{Palais63} or \cite{Kirwan84}).  Lemma \ref{lem:restrict-to-closed-subset} shows that when $Z$ is closed in $M$ and preserved by the gradient flow $\phi$, then Conditions \eqref{cond:isolated-crit-values}--\eqref{cond:hyperbolicity} can be checked by studying the properties of $f$ on the ambient smooth manifold $M$. Therefore, in order to show that these conditions are satisfied for the norm-square of a moment map on an affine variety, it is sufficient to check these conditions for the moment map associated to a linear Hamiltonian action on a vector space, which is done in Proposition \ref{prop:conditions1-4}.

The structure of the singular set $Z$ enters the picture via Condition \eqref{cond:def-retract}. The deformation retract studied here is a special case of a Neighbourhood Deformation Retract (NDR) (see for example \cite{May99}), however we also require the extra conditions (a), (b) and (c) on the deformation retract in order to guarantee that the function $\sigma$ from Section \ref{subsec:sigma} is continuous (this is explained in Lemma \ref{lem:sigma-continuous}), which is needed to show that the deformation retract of \eqref{eqn:definition-R} is continuous. Proposition \ref{prop:condition5} shows that Condition \eqref{cond:def-retract} is satisfied when $f$ is the norm square of a moment map on an analytic variety and that the deformation retract can be chosen to be equivariant with respect to the associated Hamiltonian group action. Therefore Condition \eqref{cond:def-retract} is valid for a large class of interesting examples. 

{\bf Connection with other examples of Morse theory on singular spaces.} The stratified Morse theory of Goresky and MacPherson \cite{GoreskyMacPhersonSMT} is also valid for a large class of functions on singular spaces which includes affine varieties. This theory uses a Whitney stratification of the singular space $Z$ which is compatible with the function $f : Z \rightarrow \R$. It is important to note that in general the norm-square of a moment map on a variety does not fulfil the conditions of Goresky and MacPherson in \cite{GoreskyMacPhersonSMT}. It may be possible to perturb the original function to obtain a new function satisfying Goresky and MacPherson's conditions, however in doing this we lose the equivariance of the moment map and also lose the possibility of inductively computing the cohomology of the critical sets in analogy with the computations of Atiyah \& Bott \cite{AtiyahBott83} and Kirwan \cite{Kirwan84} when the space is smooth.

The essential difference between the stratified Morse theory of \cite{GoreskyMacPhersonSMT} and the results of this paper is that Goresky and MacPherson use the local structure of the Whitney stratification to prove the main theorem of Morse theory, while for moment maps on affine varieties we already have a gradient flow which is well-behaved near the critical sets, and so we use the properties of this flow to prove Theorem \ref{thm:main-theorem-Morse-theory} instead of using the properties of the Whitney stratification.

The Conley index theory \cite{Conley78} is another theory valid for singular spaces, however the proof of the homotopy invariance of the Conley index requires the critical sets to have compact neighbourhoods, which is not generally true for the norm-square of a moment map on an affine variety. In particular, the proof of 4.2(D) on p50 of \cite{Conley78} requires compactness (see also \cite[Lemma 4.7]{Salamon85}). Nicolaescu in \cite[Thm. 9.10]{Nicolaescu10} also uses the homotopy invariance of the Conley index to prove an analog of the main theorem of Morse theory for tame flows with Morse-like critical points, however this proof also requires compactness. It may be possible to recover the Conley theory for moment maps on affine varieties by carefully analysing the behaviour of the gradient flow of $f$ near the ends of the critical sets, but we avoid this approach here as this would require specific knowledge of $f$, and the theory of this paper only requires checking Conditions \eqref{cond:isolated-crit-values}--\eqref{cond:def-retract} which are already valid for a large class of examples.

{\bf Organisation of the paper.} The results of Section \ref{sec:preliminary} show that it is possible to deformation retract to a ``good'' neighbourhood of each critical set, and the results of Section \ref{sec:main-theorem-proof} show that it is possible to deformation retract from this neighbourhood to the unstable manifold, which completes the proof of Theorem \ref{thm:main-theorem-Morse-theory}. In Section \ref{sec:moment-map} we show that Conditions \eqref{cond:isolated-crit-values}--\eqref{cond:hyperbolicity} are satisfied when $f : Z \rightarrow \R$ is the norm-square of a moment map on a variety and in Section \ref{sec:compactness-flow-lines} we use Conditions \eqref{cond:isolated-crit-values}--\eqref{cond:hyperbolicity} to prove a compactness theorem for sequences of flow lines. The results of Section \ref{sec:condition5-moment-maps} complete the proof of Theorem \ref{thm:main-theorem-affine-variety} by showing that Condition \eqref{cond:def-retract} holds for moment maps on varieties.

{\bf Acknowledgements.} I would like to thank Dinh Tien Cuong and Markus Pflaum for sharing their knowledge of singular spaces, Carlos Florentino for pointing out the reference \cite{Kempf78}, and George Daskalopoulos and Richard Wentworth for discussions about our joint work \cite{DWWW11}, \cite{WentworthWilkin12} and \cite{WentworthWilkin13} which motivated the current project. 

\section{Preliminary results}\label{sec:preliminary}

This section contains the preliminary results and definitions needed to complete the proof of Theorem \ref{thm:main-theorem-Morse-theory}. The first two steps of the proof of Theorem \ref{thm:main-theorem-Morse-theory} are contained in Proposition \ref{prop:retract-to-critical-level} and Proposition \ref{prop:retract-to-Y}. Throughout this section and the next we will refer to Conditions \eqref{cond:isolated-crit-values}--\eqref{cond:def-retract} from the introduction.

\subsection{The deformation retraction defined by the gradient flow}

\begin{lemma}\label{lem:tau-continuous}
Suppose that $f : Z \rightarrow \R$ satisfies Condition \eqref{cond:flow-alternative}, and suppose also that there is at most one critical value $c$ in the interval $[a,b]$. Let $C = \Crit_Z(f) \cap f^{-1}(c)$. Then for each $\ell \in [a,b]$ there is a continuous function $\tau_\ell : f^{-1}([a,b]) \setminus (W_C^+ \cup W_C^-) \rightarrow \mathbb{R}$ such that $f(\phi(x, \tau_\ell(x))) = \ell$. 

If $\ell \in [a,c)$ then $\tau_\ell$ can be extended to a continuous function $\tau_\ell : f^{-1}([a,b]) \setminus W_C^+ \rightarrow \R$.

If $\ell \in (c,b]$ then $\tau_\ell$ can be extended to a continuous function $\tau_\ell : f^{-1}([a,b]) \setminus W_C^- \rightarrow \R$.

Moreover, $\tau_\ell(x)$ also depends continuously on $\ell$.
\end{lemma}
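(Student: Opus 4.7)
First, for $x \in f^{-1}([a,b]) \setminus (W_C^+ \cup W_C^-)$ I would establish that $\tau_\ell(x)$ is well-defined. Since $C \subset W_C^+$, the excluded set contains $C$, so $x$ is non-critical and uniqueness of solutions to the gradient flow ODE implies $\phi(x,t)$ is non-critical for all $t$ in its domain; thus $\tfrac{d}{dt}f(\phi(x,t)) = -\|\grad f(\phi(x,t))\|^2 < 0$ and $t \mapsto f(\phi(x,t))$ is strictly decreasing. Condition (2) applied to the slab $f^{-1}([a,b])$ then forces the forward orbit to exit through $f^{-1}(a)$ at finite positive time (the alternative of forward convergence would produce a limit in $\Crit_Z(f) \cap f^{-1}([a,b]) \subset C$, i.e. place $x$ in $W_C^+$), and symmetrically the backward orbit must exit through $f^{-1}(b)$ at finite negative time. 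The intermediate value theorem combined with strict monotonicity then produces a unique $\tau_\ell(x) \in \R$ with $f(\phi(x, \tau_\ell(x))) = \ell$.

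Next, joint continuity in $(x, \ell)$ follows from a standard approximation argument. Given $(x_0, \ell_0)$ and $\tau_0 = \tau_{\ell_0}(x_0)$, strict monotonicity gives $f(\phi(x_0, \tau_0 - \epsilon)) > \ell_0 > f(\phi(x_0, \tau_0 + \epsilon))$ for every $\epsilon > 0$. Continuous dependence of $\phi$ on initial conditions on the compact interval $[\tau_0 - \epsilon, \tau_0 + \epsilon]$, together with continuity of $f$, preserves the strict inequalities after $(x_0, \ell_0)$ is replaced by any $(x, \ell')$ in a small enough neighborhood, and the intermediate value theorem then forces $\tau_{\ell'}(x) \in (\tau_0 - \epsilon, \tau_0 + \epsilon)$. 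This gives continuity both in $x$ and in $\ell$ (simultaneously).

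For the extensions I would treat $\ell \in [a,c)$ and $x \in W_C^- \setminus W_C^+$; the case $\ell \in (c,b]$ is symmetric under time reversal. Here the backward orbit is defined for all $t \leq 0$ (by the definition of $W_C^-$) with $\lim_{t \to -\infty} f(\phi(x,t)) = c > \ell$, while the forward orbit still exits through $f^{-1}(a)$ because $x \notin W_C^+$; monotonicity and the intermediate value theorem again furnish a unique $\tau_\ell(x) \in \R$. The continuity argument from the previous paragraph carries over without modification, since it only invokes $\phi$ on a bounded time interval around $\tau_0$. The main (mild) obstacle I anticipate is confirming this robustness: a sequence $x_n \to x_0 \in W_C^-$ may lie outside $W_C^-$, so that the orbits of $x_n$ behave qualitatively differently from that of $x_0$ at large $|t|$ (they exit through $f^{-1}(b)$ going backward rather than converging to $C$), but continuous dependence on initial conditions over the fixed finite window $[\tau_0 - \epsilon, \tau_0 + \epsilon]$ is insensitive to this distinction and is enough to conclude.
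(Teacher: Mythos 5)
Your proposal is correct and follows essentially the same route as the paper: existence and uniqueness of $\tau_\ell(x)$ from Condition (2) together with strict monotonicity of $t \mapsto f(\phi(x,t))$ off the critical set, and continuity from continuous dependence of the finite-time flow on initial conditions, with the extensions handled by the same argument on a fixed bounded time window. The only cosmetic difference is in the continuity in $\ell$: the paper derives it from the identity $\ell = f(x) - \int_0^{\tau_\ell(x)} \| \grad f(\phi(x,t)) \|^2\, dt$ and a positive lower bound on $\| \grad f \|$ along the compact flow segment, whereas your sandwich/intermediate-value argument establishes joint continuity in $(x,\ell)$ in one step, which is a perfectly valid (and slightly tidier) variant of the same idea.
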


\begin{proof}
For each $x \in  f^{-1}([a,b]) \setminus (W_C^+ \cup W_C^-)$, the function $\tau_\ell(x)$ exists by Condition \eqref{cond:flow-alternative}. Since $f(\phi(x, t)) \in \R$ depends continuously on $(x, t)$ and is $C^1$ with respect to $t$ with $\frac{\partial}{\partial t} f(\phi(x, t)) < 0$, then $\tau_\ell(x)$ is uniquely defined and continuous with respect to $x$.

If $\ell \in [a,c)$ and $x \in f^{-1}([a,b]) \setminus W_C^+$ then $\tau_\ell(x)$ exists by Condition \eqref{cond:flow-alternative} and the same proof shows that $\tau_\ell$ is well-defined and continuous with respect to $x$ on  $f^{-1}([a,b]) \setminus W_C^+$. Similarly, if $\ell \in (c,b]$ then $\tau_\ell$ is well-defined and continuous with respect to $x$ on $f^{-1}([a,b]) \setminus W_C^- \rightarrow \R$.

To show continuous dependence on $\ell$, note that 
\begin{equation}\label{eqn:tau_ell-in-terms-of-ell}
\ell = f(\phi(x,\tau_\ell(x))) = f(x) - \int_0^{\tau_\ell(x)} \| \grad f(\phi(x,t)) \|^2 \, dt
\end{equation}
Since $\| \grad f(\phi(x,t)) \| > 0$ depends continuously on $(x,t)$ then for $t$ in any closed bounded interval we have that $\| \grad f(\phi(x, t)) \|$ is bounded below by a positive constant (here we use the continuity of $\grad f$ along a compact subset of a flow line in place of Condition C as in \cite{Palais63}). Since we assumed that $x \notin W_C^+ \cup W_C^-$, then  $\| \grad f(\phi(x, t)) \|$ is bounded below by a positive constant along the entire flow line from $x$ to $\phi(x, \tau_\ell(x))$ and therefore \eqref{eqn:tau_ell-in-terms-of-ell} implies that $\tau_\ell(x)$ depends continuously on $\ell$. Similarly, if $\ell \in [a,c)$ and $x \notin W_C^+$, or if $\ell \in (c,b]$ and $x \notin W_C^-$, then the same argument shows that $\tau_\ell(x)$ depends continuously on $\ell$.
\end{proof}

The following lemma is well-known (see for example \cite[Sec. 10]{Palais63} or \cite[Thm. 2.3]{Conley78}).
\begin{lemma}\label{lem:retract-between-critical-levels}
If $f : Z \rightarrow \R$ satisfies Conditions \eqref{cond:isolated-crit-values} and \eqref{cond:flow-alternative}, and there are no critical values in the interval $[a,b]$, then there is a deformation retract $Z_b \simeq Z_a$ . 
\end{lemma}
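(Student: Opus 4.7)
The plan is to construct an explicit deformation retract $R : Z_b \times [0,1] \to Z_b$ of $Z_b$ onto $Z_a$ by using the negative gradient flow $\phi$ to carry each point of level greater than $a$ down to the level set $f^{-1}(a)$, while fixing all points already in $Z_a$.

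First I would invoke Lemma \ref{lem:tau-continuous} on the interval $[a,b]$. By hypothesis there are no critical values of $f$ in $[a,b]$, so the associated critical set is $C = \emptyset$ and hence $W_C^+ = W_C^- = \emptyset$; Lemma \ref{lem:tau-continuous} therefore produces a continuous function $\tau_a : f^{-1}([a,b]) \to \R$ with $f(\phi(x,\tau_a(x))) = a$ defined on the whole preimage. Moreover, the second alternative in Condition \eqref{cond:flow-alternative} would force $\lim_{t \to \infty} \phi(x,t)$ to be a critical point lying in $f^{-1}((a,b))$, which contradicts the absence of critical values in $[a,b]$; hence the first alternative must hold, giving $0 \leq \tau_a(x) < \infty$, with $\tau_a(x) = 0$ precisely when $f(x) = a$.

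Next I would define
\begin{equation*}
R(x,s) = \begin{cases} x & \text{if } f(x) \leq a, \\ \phi\bigl(x,\, s\,\tau_a(x)\bigr) & \text{if } a \leq f(x) \leq b. \end{cases}
\end{equation*}
The two cases agree on the overlap $f^{-1}(a)$ because $\tau_a$ vanishes there, and since $f$ is non-increasing along $\phi$ the map $R$ takes values in $Z_b$. The identities $R(\,\cdot\,,0) = \mathrm{id}_{Z_b}$, $R(x,s) = x$ for $x \in Z_a$ and all $s \in [0,1]$, and $R(x,1) \in f^{-1}(a) \subset Z_a$ for $x \in f^{-1}([a,b])$ are then immediate from the construction, so once $R$ is shown to be continuous it is a strong deformation retract.

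The only delicate point is the continuity of $R$ along the seam $f^{-1}(a)$, which I would handle via the pasting lemma: each piece is manifestly continuous (the first trivially, the second by continuity of $\phi$ and of $\tau_a$ from Lemma \ref{lem:tau-continuous}), and they glue continuously on the closed overlap $f^{-1}(a)$ because $\tau_a$ is continuous and vanishes identically there. I do not anticipate any serious obstacle: this is the standard ``flow-down'' deformation retract, and all the substantive analytic content has been absorbed into Lemma \ref{lem:tau-continuous}, so once that lemma is available the proof reduces to the piecewise construction above.
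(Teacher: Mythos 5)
Your proof is correct and is essentially the argument the paper intends: the paper simply cites Palais and Conley for this well-known fact, and the standard proof there is exactly your ``flow down for time $s\tau_a(x)$'' retraction, with the analytic content carried by Lemma \ref{lem:tau-continuous} and Condition \eqref{cond:flow-alternative} ruling out an interior limit point. The same construction (reparametrised by the value of $f$ rather than by time) reappears in the paper's own Proposition \ref{prop:retract-to-critical-level} for the harder case of a critical endpoint, so your route matches the paper's.
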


\begin{remark}
Note that Conditions \eqref{cond:isolated-crit-values} and \eqref{cond:flow-alternative} are a consequence of the Palais-Smale Condition C used in \cite[Sec. 10]{Palais63} and the proof of Lemma \ref{lem:retract-between-critical-levels} given in \cite{Palais63} only uses Conditions \eqref{cond:isolated-crit-values} and \eqref{cond:flow-alternative}. We avoid assuming Condition C here since we would also like to consider the case where $f$ is the norm-square of a moment map on a variety, in which case Condition C fails in general since $f$ may have non-compact critical sets, but Conditions \eqref{cond:isolated-crit-values} and \eqref{cond:flow-alternative} still hold.
\end{remark}

The next result shows that if we also assume Condition \eqref{cond:analytic} ($f$ is analytic), then an analogous statement is true for the situation when there is a critical value at one end of the interval $[c,b]$.

\begin{proposition}\label{prop:retract-to-critical-level}
Suppose that $f : Z \rightarrow \mathbb{R}$ satisfies conditions \eqref{cond:isolated-crit-values}, \eqref{cond:flow-alternative} and \eqref{cond:analytic}. Let $C$ be a critical set of $f : Z \rightarrow \mathbb{R}$ with critical value $c$ and suppose that there are no critical values in the interval $(c, b]$. Then $Z_b \simeq Z_c$.
\end{proposition}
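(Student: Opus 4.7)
I plan to construct an explicit strong deformation retraction $r \colon Z_b \times [0,1] \to Z_b$ of $Z_b$ onto $Z_c$ via the gradient flow, whence $Z_b \simeq Z_c$. For $s \in [0,1]$ let $\ell(s) = c + (1-s)(b - c)$, so $\ell(0) = b$ and $\ell(1) = c$. Define
\[
r(x, s) = \begin{cases} x, & f(x) \leq \ell(s), \\ \phi(x, \tau_{\ell(s)}(x)), & f(x) > \ell(s),\ s < 1, \end{cases}
\]
and at $s=1$ extend by $r(x,1) = x$ if $f(x) \leq c$, by $r(x,1) = \phi(x, \tau_c(x))$ if $x \notin W_C^+$ and $f(x) > c$, and by $r(x,1) = \lim_{t \to \infty} \phi(x, t) \in C$ if $x \in W_C^+$ and $f(x) > c$. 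Existence of this last limit follows from Lojasiewicz's convergence theorem for analytic functions (Condition \eqref{cond:analytic}).

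For $s < 1$ the target level $\ell(s)$ exceeds $c$, so $\{f > \ell(s)\}\subset f^{-1}([a,b]) \setminus W_C^-$ and the second clause of Lemma \ref{lem:tau-continuous} delivers a continuous $\tau_{\ell(s)}$ on that set. Combined with the continuous dependence of $\tau_\ell$ on $\ell$ from the same lemma, this gives joint continuity of $r$ on $Z_b \times [0, 1)$, and continuity across the matching locus $\{f = \ell(s)\}$ is automatic from $\tau_{\ell(s)}(x) \to 0$ as $f(x) \to \ell(s)^+$.

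The main obstacle is continuity of $r$ at points $(x, 1)$ with $x \in W_C^+ \cap f^{-1}((c,b])$; set $p := \lim_{t \to \infty} \phi(x, t) \in C$. My plan here is to use the Lojasiewicz gradient inequality $\|\grad f\| \geq \kappa |f - c|^\theta$ in a neighbourhood $N$ of $p$ (available from Condition \eqref{cond:analytic}), yielding the length bound
\[
\int_T^\tau \|\grad f(\phi(y,t))\|\, dt \;\leq\; \frac{(f(\phi(y,T)) - c)^{1-\theta}}{\kappa(1-\theta)}
\]
for any flow line remaining in $N$ throughout $[T,\tau]$. Given $\eta > 0$, first pick $T$ so large that $\phi(x, T) \in N \cap B(p, \eta/4)$ and the right-hand side above, evaluated at $y=x$, is below $\eta/4$; then continuous dependence of $\phi$ on initial data yields a neighbourhood $V$ of $x$ on which the same bounds hold uniformly. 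A short bootstrap then confines $\phi(y, t)$ to $N \cap B(p, \eta)$ for all $t \geq T$ --- until the flow of $y$ either converges in $C$ (if $y \in W_C^+$) or reaches level $c$ (if $y \notin W_C^+$). In either case $r(y, s') \in B(p, \eta)$ once $s'$ is close enough to $1$ and $y \in V$, giving continuity at $(x,1)$. Hence $r$ is a continuous strong deformation retraction of $Z_b$ onto $Z_c$ (with $r(\cdot, 0) = \mathrm{id}$, $r(Z_b, 1) \subseteq Z_c$, and $r$ fixing $Z_c$ pointwise), which proves $Z_b \simeq Z_c$. The essential use of analyticity (Condition \eqref{cond:analytic}) is precisely to obtain the Lojasiewicz length estimate that controls the flow endpoints of points approaching $W_C^+$; without it, the naive ``flow to level $c$'' map is typically discontinuous.
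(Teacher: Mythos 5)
Your proposal is correct and follows essentially the same route as the paper: use $\tau_\ell$ from Lemma \ref{lem:tau-continuous} to flow everything down to level $c$, and handle continuity at the stable set $W_C^+$ via the Lojasiewicz gradient inequality of \cite{Simon83}, which is exactly the paper's argument (your uniform sweeping level $\ell(s)$ versus the paper's point-dependent target $f(x)-s(f(x)-c)$ is an immaterial reparametrization of the same homotopy). Your write-up actually spells out the length-estimate bootstrap that the paper only sketches.
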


\begin{proof}
Lemma \ref{lem:tau-continuous} shows that there exists a continuous function $\tau_c : f^{-1}([c,b]) \setminus W_C^+ \rightarrow \R$ such that $f(x, \tau_c(x)) = c$. This immediately gives us a deformation retract from $f^{-1}([c,b]) \setminus W_C^+$ to $f^{-1}(c) \setminus C$, and the goal of the proof is to show that this can be extended to a deformation retract from $f^{-1}([c,b])$ to $f^{-1}(c)$.

Given $\ell_1, \ell_2 \in (c,b]$, Lemma \ref{lem:tau-continuous} implies that the gradient flow defines a continuous map of level sets $\varphi_{\ell_1, \ell_2} : f^{-1}(\ell_1) \rightarrow f^{-1}(\ell_2)$ by $\varphi_{\ell_1,\ell_2}(x) = \phi(x, \tau_{\ell_2}(x))$. 

If $x \in W_C^+$ then Condition \eqref{cond:flow-alternative} guarantees that $\lim_{t \rightarrow \infty} \phi(x, t) = z$ for some $z \in C$. Therefore, for any $\ell \in (c,b]$ there is a well-defined map of level sets $\varphi_{\ell, c} : f^{-1}(\ell) \rightarrow f^{-1}(c)$ and we aim to show that this is continuous. If $x \in f^{-1}(\ell) \setminus (f^{-1}(\ell) \cap W_C^+)$ then $\tau_c(x)$ is finite and so the continuity of $\varphi_{\ell,c}(x) = \phi(x, \tau_c(x))$ follows from the continuous dependence of $\tau_c$ on $x$ proved in Lemma \ref{lem:tau-continuous}. 

If $x \in f^{-1}(\ell) \cap W_C^+$ then the proof of continuity of $\varphi_{\ell,c}$ uses the Lojasiewicz gradient inequality method of \cite{Simon83} as follows. Let $z = \varphi_{\ell,c}(x)$. For every neighbourhood $V$ of $z$ in $f^{-1}(c)$ there exists a neighbourhood $U$ of $z$ in $f^{-1}([c, \infty))$ such that if $y \in U$ then either $\phi(y,t)$ converges to a critical point in $V \cap C$, or there exists a finite $T$ such that $\phi(y, T) \in V$. Continuity of the finite-time flow guarantees an open neighbourhood $U'$ of $x$ in $f^{-1}(\ell)$ such that for each $y \in U'$ there exists $T'$ such that $\phi(y, T') \in U$ and therefore $\varphi_{\ell, c}(y) \in V$. Therefore, given any open set $V \subset f^{-1}(c)$ containing $\varphi_{\ell,c}(x)$, there exists an open neighbourhood $U'$ of $x$ in $f^{-1}(\ell)$ such that $\varphi_{\ell, c}(U') \subset V$, and so $\varphi_{\ell, c}$ is continuous.

Therefore we can define a continuous deformation retract $\rho : Z_b \times [0,1] \rightarrow Z_c$ by
\begin{equation*}
\rho(x,s) = \begin{cases} x & \text{if} \, \, f(x) \leq c \\ \varphi_{f(x), (f(x) - s(f(x)-c))}(x) & \text{if} \, \, c < f(x) \leq b \end{cases} \qedhere
\end{equation*}
\end{proof}

\subsection{The deformation retract to a neighbourhood of the critical set}\label{subsec:sigma}

In this section we prove Proposition \ref{prop:retract-to-Y}, which shows that $Z_c$ deformation retracts to the union of $Z_{c-\varepsilon}$ with the set $Y$ defined below.

Now assume that conditions \eqref{cond:isolated-crit-values}, \eqref{cond:flow-alternative}, \eqref{cond:hyperbolicity} and \eqref{cond:def-retract} hold. Let $c$ be a critical value with corresponding critical set $C = \Crit_Z(f) \cap f^{-1}(c)$ and let $\varepsilon$, $E$ and $r : E \times [0,1] \rightarrow E$ be as in condition \eqref{cond:def-retract}. Define $E_s = r(E, s)$ for all $s \in [0,1]$. Define $\sigma : f^{-1}(c-\varepsilon) \rightarrow [0,1]$ by
\begin{equation*}
\sigma(x) := \begin{cases} 0 & x \notin E \\ \sup \{ s \in [0,1] \mid x \in E_s \} & x \in E \end{cases}
\end{equation*}

\begin{lemma}\label{lem:sigma-continuous}
If $x \in E$ and $s \in (0,1)$, then
\begin{enumerate}
\item $\sigma(x) = s$ if and only if $x \in \overline{E}_s \setminus E_s$.

\item $\sigma(x) < s$ if and only if $x \notin \overline{E}_s$.

\item $\sigma(x) > s$ if and only if $x \in E_s$.
\end{enumerate}
\end{lemma}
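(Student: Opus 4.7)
The three equivalences are purely order-theoretic consequences of the nestedness axioms (a)--(c) on the family $\{E_s\}_{s \in [0,1]}$ and of the supremum definition of $\sigma$. The first observation I would record is a monotonicity statement: combining the trivial inclusion $E_t \subset \overline{E}_t$ with condition (b) of \eqref{cond:def-retract} yields $E_t \subset E_s$ whenever $t > s$, so the family $\{E_s\}$ is decreasing in $s$. With this in hand, the sup defining $\sigma(x) = \sup \{ s \in [0,1] \mid x \in E_s\}$ can be unpacked using standard arguments about suprema of indicator-type sets.

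I would then establish (3) first, since it is the most direct. For the forward implication, $\sigma(x) > s$ means by definition of the supremum that there exists some $t$ with $s < t \leq \sigma(x)$ and $x \in E_t$; monotonicity then gives $x \in E_s$. Conversely, if $x \in E_s$, then the union formula in condition (c), $E_s = \bigcup_{t > s} E_t$, produces some $t > s$ with $x \in E_t$, whence $\sigma(x) \geq t > s$. For (2), I would argue symmetrically using the intersection formula: if $\sigma(x) < s$, pick any $t_0 \in (\sigma(x), s)$, so that $x \notin E_{t_0}$ by the definition of $\sigma$, and hence $x \notin \bigcap_{t<s} E_t = \overline{E}_s$. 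Conversely, if $x \notin \overline{E}_s$ then by condition (c) there is some $t_0 < s$ with $x \notin E_{t_0}$, and monotonicity forces $\sigma(x) \leq t_0 < s$.

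Finally, (1) drops out tautologically from (2) and (3): since $\sigma(x) \in [0,1]$, the equality $\sigma(x) = s$ is equivalent to the conjunction of $\neg(\sigma(x) < s)$ and $\neg(\sigma(x) > s)$, which by the already-proved equivalences translates exactly into $x \in \overline{E}_s$ and $x \notin E_s$, i.e.\ $x \in \overline{E}_s \setminus E_s$. I do not anticipate a genuine obstacle here: the conditions (a)--(c) imposed in Condition \eqref{cond:def-retract} have been engineered precisely so that $\sigma$ admits this clean characterisation, and the only real care needed is to keep the strict versus non-strict inequalities straight when working with the supremum (so as not to conflate the case where the sup is attained with the case where it is merely approached).
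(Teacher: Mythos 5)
Your argument is correct and follows essentially the same route as the paper: both rest on the monotonicity $E_t \subset \overline{E}_t \subset E_s$ for $t > s$ (so that $\{ s \mid x \in E_s \}$ is an interval containing $0$) together with the union and intersection formulas of condition (c) to unpack the supremum defining $\sigma$. The only difference is organisational — you prove (3) and (2) directly and obtain (1) by trichotomy, whereas the paper proves (1) first and uses it for the converse of (3) — which does not change the substance of the proof.
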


\begin{proof}

First note that since $\overline{E}_t \subset E_s$ for all $t > s$ then the set $\{ s \in [0,1] \mid x \in E_s \}$ is a connected interval for all $x \in E$.
\begin{enumerate}

\item If $\sigma(x) = s$ then $x \in E_t$ for all $t < s$. Therefore $x \in \bigcap_{t < s} E_t = \overline{E}_s$. If $x \in E_s = \bigcup_{t > s} E_t$ then there exists $t > s$ such that $x \in E_t$, contradicting $\sigma(x) = s$. Therefore $x \in \overline{E}_s \setminus E_s$.

Conversely, if $x \in \overline{E}_s = \bigcap_{t < s} E_t$ then $\sigma(x) \geq s$. If $x \notin E_s$ then $\{ s \in [0,1] \mid x \in E_s \} \subset [0, s)$ (since it is connected and $x \in \bar{E}_s \subset E_0$) and so $\sigma(x) \leq s$. Therefore $x \in \overline{E}_s \setminus E_s$ implies that $\sigma(x) = s$.

\item If $\sigma(x) < s$ then there exists $t$ such that $\sigma(x) < t < s$. Then $x \notin E_t \supset \overline{E}_s$.

If $x \notin \overline{E}_s = \bigcap_{t < s} E_t$ then there exists $t < s$ such that $x \notin E_t$ and so $\sigma(x) < s$ since $\{ s \in [0,1] \mid x \in E_s \}$ is a connected interval and $x \in E_0$ by assumption.

\item If $\sigma(x) > s$ then there exists $t$ such that $x \in E_t$ and $\sigma(x) > t > s$. Therefore $x \in E_t \subset E_s$.

If $x \in E_s$ then $\sigma(x) \geq s$. The proof above shows that $\sigma(x) = s$ if and only if $x \in \overline{E}_s \setminus E_s$ and so $x \in E_s$ implies that $\sigma(x) > s$. \qedhere

\end{enumerate}
\end{proof}

\begin{corollary}\label{cor:sigma-continuous}
$\sigma$ is continuous.
\end{corollary}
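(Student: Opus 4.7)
The plan is to derive continuity of $\sigma$ from the identities in Lemma \ref{lem:sigma-continuous} by checking that the preimages of a subbase for the topology on $[0,1]$ are open. Specifically, I will use the subbase consisting of the sets $[0,s)$ and $(s,1]$ for $s \in (0,1)$, and show that both $\sigma^{-1}([0,s))$ and $\sigma^{-1}((s,1])$ are open in $f^{-1}(c-\varepsilon)$.

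First I would record the preliminary fact that $\overline{E}_s \subset E$ for every $s \in (0,1)$. This follows from condition (c) of \eqref{cond:def-retract}: $\overline{E}_s = \bigcap_{t<s} E_t \subset E_0 = E$. As a consequence, if $x \notin E$ then $x \notin \overline{E}_s$ for every $s \in (0,1)$, so Lemma \ref{lem:sigma-continuous}(2) gives $\sigma(x) < s$ for all such $s$, in agreement with the definition $\sigma(x) = 0$ for $x \notin E$.

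Next I would identify the two preimages. For $s \in (0,1)$, Lemma \ref{lem:sigma-continuous}(2) together with the observation above yields
\begin{equation*}
\sigma^{-1}([0,s)) \;=\; \bigl(f^{-1}(c-\varepsilon) \setminus E\bigr) \cup \{\, x \in E : x \notin \overline{E}_s\,\} \;=\; f^{-1}(c-\varepsilon) \setminus \overline{E}_s,
\end{equation*}
which is the complement of a closed set and hence open. Similarly, since $\sigma(x) = 0$ for $x \notin E$, only points of $E$ can lie in $\sigma^{-1}((s,1])$, and Lemma \ref{lem:sigma-continuous}(3) then gives
\begin{equation*}
\sigma^{-1}((s,1]) \;=\; \{\, x \in E : \sigma(x) > s\,\} \;=\; E_s,
\end{equation*}
which is open by hypothesis (a) of condition \eqref{cond:def-retract}. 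Together with $\sigma^{-1}([0,1]) = f^{-1}(c-\varepsilon)$, these sets form a subbase for the topology on $[0,1]$, so $\sigma$ is continuous.

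I do not expect a genuine obstacle here; the only subtlety is the bookkeeping at points $x \notin E$, and this is handled uniformly by the inclusion $\overline{E}_s \subset E$ derived from condition (c). The role of condition (c) is precisely to pin down where the supremum in the definition of $\sigma$ is attained (or fails to be attained), via the characterisation $\{\sigma = s\} = \overline{E}_s \setminus E_s$ of Lemma \ref{lem:sigma-continuous}(1), and without it the two level-set descriptions above would not line up to give open preimages.
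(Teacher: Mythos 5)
Your proposal is correct and follows essentially the same route as the paper: both verify that $\sigma^{-1}([0,s)) = f^{-1}(c-\varepsilon)\setminus \overline{E}_s$ and $\sigma^{-1}((s,1]) = E_s$ are open using Lemma \ref{lem:sigma-continuous}, the paper working with a base of intervals and you with the equivalent subbase. Your explicit check that points outside $E$ are handled via $\overline{E}_s \subset E_0 = E$ is a detail the paper leaves implicit, but the argument is the same.
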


\begin{proof}
The previous lemma shows that if $0 < s_1 < s_2 < 1$ then $\sigma^{-1}\left((s_1, s_2)\right) = E_{s_1} \setminus \overline{E}_{s_2}$ which is open. We also have $\sigma^{-1}([0, s_2)) = f^{-1}(c-\varepsilon) \setminus \overline{E}_{s_2}$ which is open and $\sigma^{-1}((s_1, 1]) = E_{s_1}$ which is also open. Therefore $\sigma^{-1}(U)$ is open for all open sets $U \subset [0,1]$ and so $\sigma$ is continuous.
\end{proof}

Now extend the domain of $\sigma$ to $f^{-1}([c-\varepsilon, c+\varepsilon]) \setminus W_C^+$ by defining $\sigma(x) = \sigma(\phi(x, \tau_{c-\varepsilon}(x)))$ so that $\sigma$ is constant on flow lines. Since $\tau_{c-\varepsilon}$ and $\phi$ are continuous (Lemmas \ref{lem:tau-continuous} and \ref{lem:sigma-continuous}) then $\sigma : f^{-1}([c-\varepsilon, c+\varepsilon]) \setminus W_C^+ \rightarrow [0,1]$ is also continuous.

\begin{lemma}\label{lem:continuous-above-critical-level}
For each $c \in C$ and any $\varepsilon > 0$ there exists a neighbourhood $V$ of $c$ such that $y \in V \setminus W_C^+$ implies that $\sigma(y) \in (1-\varepsilon, 1]$. 
\end{lemma}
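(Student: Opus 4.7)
The plan is to combine the hyperbolicity condition \eqref{cond:hyperbolicity} with part (3) of Lemma \ref{lem:sigma-continuous}. Throughout, let $c_0 = f(c)$ denote the critical value (the statement slightly overloads notation: here $c \in C$ is a critical point), and let $\varepsilon > 0$ be the constant from Condition \eqref{cond:def-retract}, so that $\sigma$ is defined on $f^{-1}([c_0-\varepsilon, c_0+\varepsilon]) \setminus W_C^+$.

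First, I would fix $\varepsilon' > 0$ and identify a good open target in the level set $f^{-1}(c_0-\varepsilon)$. By property (a) of Condition \eqref{cond:def-retract}, $E_{1-\varepsilon'}$ is open in $f^{-1}(c_0-\varepsilon)$, and since the family $\{E_s\}$ is nested (so $E_{1-\varepsilon'} \supset E_1 = W_C^- \cap f^{-1}(c_0-\varepsilon)$), we have $E_{1-\varepsilon'} \supset W_c^- \cap f^{-1}(c_0-\varepsilon)$, where the last inclusion follows from $c \in C$.

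Second, I would invoke Condition \eqref{cond:hyperbolicity} with $a = c_0-\varepsilon$ (which is permissible since there are no critical values in $[c_0-\varepsilon, c_0)$ by condition \eqref{cond:isolated-crit-values} and the choice of $\varepsilon$) and with $U = E_{1-\varepsilon'}$. This produces an open neighborhood $V$ of $c$ in $Z$ such that for every $y \in V \setminus W_C^+$ there exists $T > 0$ with $\phi(y, T) \in E_{1-\varepsilon'}$. Since $E_{1-\varepsilon'} \subset f^{-1}(c_0-\varepsilon)$, this $T$ must coincide with $\tau_{c_0-\varepsilon}(y)$.

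Finally, Lemma \ref{lem:sigma-continuous}(3) gives $\sigma(\phi(y, \tau_{c_0-\varepsilon}(y))) > 1-\varepsilon'$, and by the extended definition $\sigma(y) = \sigma(\phi(y, \tau_{c_0-\varepsilon}(y)))$, so $\sigma(y) \in (1-\varepsilon', 1]$. By shrinking $V$ if necessary (using continuity of $f$) we can ensure $V \subset f^{-1}([c_0-\varepsilon, c_0+\varepsilon])$, so that $\sigma$ is actually defined on $V \setminus W_C^+$.

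There is no real obstacle here: the lemma is essentially a packaging of hyperbolicity \eqref{cond:hyperbolicity} with the characterization of $\sigma$ in terms of the shrinking family $\{E_s\}$. The only point requiring care is the bookkeeping between the critical value $c_0 = f(c)$, the ``height'' $c_0-\varepsilon$ at which $\sigma$ lives, and the level at which $U$ is chosen; once these are lined up, the assertion is immediate.
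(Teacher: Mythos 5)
Your proof is correct and follows essentially the same route as the paper: invoke Condition \eqref{cond:hyperbolicity} with an open set $U$ contained in some $E_s$ with $s > 1-\varepsilon$, then conclude via Lemma \ref{lem:sigma-continuous}(3) and the fact that $\sigma$ is constant along flow lines. Your version is slightly more explicit about the notational clash between the critical point $c\in C$, the critical value $f(c)$, and the two roles of $\varepsilon$, but the argument is the same.
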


\begin{proof}
Given any $\varepsilon > 0$, choose $s \in (1-\varepsilon, 1]$. Choose a neighbourhood $U$ of $W_x^- \cap f^{-1}(c-\varepsilon)$ such that $U \subset E_s$. Then Condition \eqref{cond:hyperbolicity} says that there exists a neighbourhood $V$ of $x$ such that for each $y \in V \setminus W_C^+$ we have $\phi(y, T) \in U \subset E_s$ for some $T > 0$. Therefore $\sigma(y) > s > 1 - \varepsilon$ by Lemma \ref{lem:sigma-continuous}. 
\end{proof}

We can further extend $\sigma$ to a function 
\begin{equation}
\sigma : \left( f^{-1}([c-\varepsilon, c+\varepsilon]) \setminus W_C^+ \right) \cup C \rightarrow [0,1] .
\end{equation}
by defining $\sigma(x) = \sigma(\phi(x, \tau_{c-\varepsilon}(x)))$ for $x \notin C$ and $\sigma(x) = 1$ for $x \in C$. Lemma \ref{lem:continuous-above-critical-level} shows that this is continuous. Define $g : f^{-1}([c-\varepsilon, c]) \rightarrow [c-3\varepsilon, c]$ by 
\begin{equation*}
g(x) = f(x) - 2 \varepsilon \sigma(x)
\end{equation*}
and define the set $Y := g^{-1}([c-3 \varepsilon, c-\varepsilon]) \subset f^{-1}([c-\varepsilon, c])$. Note that $g$ is continuous since $\sigma$ is continuous. Also note that $f$ and $g$ have the same stationary points for the flow $\phi(x,t)$, since $\frac{\partial}{\partial t} f(\phi(x, t)) = 0$ if and only if $x \in C$ and (by definition) $\sigma$ is constant along the flow, so $\frac{\partial}{\partial t} g(\phi(x, t)) = 0$ if and only if $x \in C$. In particular, there are no stationary points in $g^{-1}([c-\varepsilon, c])$ since $x \in C$ implies $g(x) = c - 2\varepsilon$.

\begin{figure}[ht]
\begin{center}
\begin{pspicture}(0,0.5)(10,3.5)
\psline[fillstyle=solid,fillcolor=lightgray](2,1)(3.5,3)(6.5,3)(8,1)(2,1)
\psline(0,1)(10,1)
\psline(3.5,3)(6.5,3)
\psline[linewidth=2pt](5,1)(5,3)
\psline(2,1)(3.5,3)
\psline(6.5,3)(8,1)
\psline[linestyle=dashed,linewidth=0.5pt](0,3)(10,3)
\psline[linestyle=dashed,linewidth=0.5pt](3.5,1)(3.5,3)
\psline[linestyle=dashed,linewidth=0.5pt](6.5,1)(6.5,3)
\psdots(5,3)(8,1)(6.5,1)(5,1)(3.5,1)(2,1)
\uput{4pt}[90](5,3){$C$}
\uput{4pt}[45](7.5,1.666){\small $g^{-1}(c-\varepsilon)$}
\uput{4pt}[135](2.5,1.666){\small $g^{-1}(c-\varepsilon)$}
\uput{4pt}[180](5,2){$W_C^-$}
\uput{4pt}[90](0,3){\small $f^{-1}(c)$}
\uput{4pt}[270](0,1){\small $f^{-1}(c-\varepsilon)$}
\uput{4pt}[270](8,1){\small $\sigma=0$}
\uput{4pt}[270](6.5,1){\small $\sigma=\frac{1}{2}$}
\uput{4pt}[270](5,1){\small $\sigma=1$}
\uput{4pt}[270](3.5,1){\small $\sigma=\frac{1}{2}$}
\uput{4pt}[270](2,1){\small $\sigma=0$}
\end{pspicture}
\end{center}
\caption{In the figure above, the flow lines for $f$ are represented by vertical lines and the level sets of $f$ are represented by horizontal lines. The set $Y$ is the shaded region.}
\end{figure}
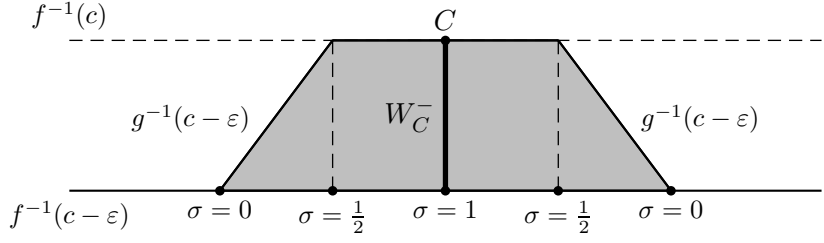

\begin{proposition}\label{prop:retract-to-Y}
Suppose that $f : Z \rightarrow \mathbb{R}$ satisfies conditions \eqref{cond:isolated-crit-values}, \eqref{cond:flow-alternative} and \eqref{cond:def-retract}. Then $Z_c \simeq Z_{c-\varepsilon} \cup Y$.
\end{proposition}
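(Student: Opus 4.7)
The plan is to construct an explicit strong deformation retract $\rho : Z_c \times [0,1] \to Z_c$ onto $Z_{c-\varepsilon} \cup Y$ that fixes $Z_{c-\varepsilon} \cup Y$ pointwise and, on the complementary region $\{x \in Z_c : g(x) > c - \varepsilon\}$, pushes points down the negative gradient flow of $f$ just until $g$ drops to the level $c - \varepsilon$. The key observation that makes this well-defined is that $\sigma$ is by construction constant along flow lines, so along the flow
\[
g(\phi(x, t)) = f(\phi(x, t)) - 2\varepsilon \sigma(x)
\]
decreases at exactly the rate of $f$; in particular the moment when $g$ first reaches $c - \varepsilon$ is the moment when $f$ reaches the target level $\ell(x) := c - \varepsilon + 2\varepsilon \sigma(x)$.

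For $x \in Z_c$ with $g(x) > c - \varepsilon$ one has $f(x) \in (c-\varepsilon, c]$ and $\sigma(x) < 1/2$, so $\ell(x) \in [c-\varepsilon, c)$. I would set $T(x) := \tau_{\ell(x)}(x) \geq 0$, which is provided by Lemma \ref{lem:tau-continuous}; it is well-defined because $W_C^+ \cap Z_c = C$, which is contained in $Y$ (since $g \equiv c - 2\varepsilon$ on $C$), so the flow region avoids $W_C^+$. Continuity of $T$ follows from the joint continuity of $\tau_\ell(x)$ in $(\ell, x)$ in Lemma \ref{lem:tau-continuous} combined with Corollary \ref{cor:sigma-continuous}. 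Then define
\[
\rho(x, s) = \begin{cases} x, & f(x) \leq c - \varepsilon \text{ or } g(x) \leq c - \varepsilon, \\ \phi(x, s\, T(x)), & g(x) > c - \varepsilon. \end{cases}
\]
By construction $\rho(\cdot, 0) = \id$, $\rho$ fixes $Z_{c-\varepsilon} \cup Y$ at all times $s$, and $g(\phi(x, T(x))) = c - \varepsilon$ for $x$ in the flow region, so $\rho(\cdot, 1)$ lands in $Z_{c-\varepsilon} \cup Y$.

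The main obstacle will be verifying joint continuity of $\rho$, which requires checking three interfaces. Across $\{g = c - \varepsilon\}$: if $g(x_n) \searrow c - \varepsilon$ from the flow region then $\ell(x_n) \to f(x)$ by continuity of $\sigma$, and hence $T(x_n) \to 0$ by the joint continuity in Lemma \ref{lem:tau-continuous}, so the two formulas agree continuously at the junction. Across $\{f = c - \varepsilon\}$: such points automatically satisfy $g \leq c - \varepsilon$ and so lie in $Y$, where $\rho$ is the identity on both sides. The most delicate case is near the critical set $C$, where flow times could a priori blow up; this is resolved by Lemma \ref{lem:continuous-above-critical-level}, which forces $\sigma$ to be close to $1$, hence $g < c - \varepsilon$, on an open neighbourhood of $C$, placing that entire neighbourhood inside $Y$ where $\rho$ is already the identity. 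Elsewhere joint continuity is immediate from continuity of $\phi$ and $T$. Finally, if a group $K$ acts on $Z$ with the retract of Condition \eqref{cond:def-retract} chosen $K$-equivariant, then $\sigma$, $g$, $Y$, and $T$ are $K$-invariant while $\phi$ is $K$-equivariant, so $\rho$ is automatically $K$-equivariant.
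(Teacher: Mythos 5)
Your construction is essentially the paper's proof made explicit: the paper produces the retraction of $\{g \geq c-\varepsilon\}$ onto $g^{-1}(c-\varepsilon)$ by invoking Wazewski's theorem with exit set $g^{-1}(c-\varepsilon)$, and the exit-time map in that argument is exactly your $T(x) = \tau_{\ell(x)}(x)$, so the two retractions coincide. The only point worth flagging is your appeal to Lemma \ref{lem:continuous-above-critical-level} near $C$, which rests on Condition \eqref{cond:hyperbolicity} even though the proposition lists only \eqref{cond:isolated-crit-values}, \eqref{cond:flow-alternative} and \eqref{cond:def-retract}; but the paper has the same hidden dependence (continuity of $g$ at $C$, and hence the disjointness of $\overline{\{g > c-\varepsilon\}}$ from $C$, comes from that lemma), and Condition \eqref{cond:hyperbolicity} is a standing assumption in that subsection, so this is not a defect of your argument.
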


\begin{proof}
Since $Y = g^{-1}([c-3 \varepsilon, c-\varepsilon]) \subset f^{-1}([c-\varepsilon, c])$ then the proof reduces to defining a deformation retract of $f^{-1}([c-\varepsilon, c]) = g^{-1}([c-3\varepsilon, c])$ onto $g^{-1}([c-3\varepsilon, c-\varepsilon])$. Since $\frac{\partial}{\partial t} g(\phi(x, t)) = \frac{\partial}{\partial t} f(\phi(x, t)) < 0$ if and only if $x \notin C$ and there are no stationary points for the flow in $g^{-1}([c-\varepsilon, c])$, then 
\begin{itemize}

\item for all $x \in g^{-1}([c-\varepsilon, c])$ there exists $t$ such that $g(\phi(x, t)) < c - \varepsilon$, and 
 
\item for all $x \in g^{-1}(c-\varepsilon)$ we have $g(\phi(x,t)) < c-\varepsilon$ for all $t > 0$.

\end{itemize}
Therefore $g^{-1}(c-\varepsilon)$ is an \emph{exit set} for the flow on $g^{-1}([c-\varepsilon, c])$ (see \cite[Def. 2.2]{Conley78}). 

The function $g$ is continuous, so $g^{-1}(c-\varepsilon)$ is closed in $g^{-1}([c-\varepsilon, c])$, and we have already observed that there are no stationary points for the flow in $g^{-1}([c-\varepsilon, c])$. Therefore the required deformation retract follows from Wazewski's theorem (see for example \cite{Wazewski54} or \cite[Thm 2.3]{Conley78}).
\end{proof}

\section{Proof of the main theorem}\label{sec:main-theorem-proof}

Propositions \ref{prop:retract-to-critical-level} and \ref{prop:retract-to-Y} together define a deformation retract from $Z_b$ to $Z_c \cup Y$. In this section we complete the proof of Theorem \ref{thm:main-theorem-Morse-theory} by constructing a deformation retract from $Z_c \cup Y$ to $Z_{c-\varepsilon} \cup W_C^-$.

The basic idea is to construct this deformation retract by combining the gradient flow $\phi(x,t)$ and the deformation retract $r$ from condition \eqref{cond:def-retract}. Since these both preserve the space $Z$ then the composition of these deformation retracts will also preserve $Z$. The proof that the deformation retract is continuous uses condition \eqref{cond:hyperbolicity}. 

Given $x \in Y$, define $s_{final} : Y \rightarrow [0,1]$ by
\begin{equation*}
s_{final}(x) = \begin{cases} \frac{f(x)-(c-\varepsilon)}{2\varepsilon(1-\sigma(x))} & c-\varepsilon \leq f(x) < c-\varepsilon + 2\varepsilon(1-\sigma(x)) \\ 1 & f(x) \geq c-\varepsilon + 2 \varepsilon (1-\sigma(x))  \end{cases}
\end{equation*}
Note that if $f(x) < c-\varepsilon + 2\varepsilon(1-\sigma(x))$ then we automatically have $s_{final}(x) = \frac{f(x)-(c-\varepsilon)}{2\varepsilon(1-\sigma(x))} < 1$. If $\sigma(x) = 1$ then we have $f(x) \geq c-\varepsilon = c-\varepsilon + 2\varepsilon(1-\sigma(x))$ and so $s_{final}(x) = 1$. Define $f_{final} : Y \rightarrow [c-\varepsilon, c]$ by
\begin{equation*}
f_{final}(x) = \begin{cases} c-\varepsilon & c-\varepsilon \leq f(x) < c-\varepsilon + 2\varepsilon(1-\sigma(x)) \\ f(x) - 2\varepsilon(1-\sigma(x)) & f(x) \geq c-\varepsilon + 2 \varepsilon (1-\sigma(x)) \end{cases}
\end{equation*}
Note that it follows immediately from the definition that $c - \varepsilon \leq f_{final}(x) \leq f(x)$ for all $x \in Y$.

The functions $s_{final}$ and $f_{final}$ are chosen so that $s_{final}$ is the maximum value of $s$ to use in the definition of $y(x,s)$ in \eqref{eqn:definition-y} below, and $f_{final}$ is the value of $f(R(x,1))$ in the deformation retract $R(x,s)$ of \eqref{eqn:definition-R}. Note that $s_{final}(x) < 1$ implies that $f_{final}(x) = c-\varepsilon$.

Since $f_{final}$ is composed of continuous functions that agree when $f(x) = c-\varepsilon + 2 \varepsilon(1-\sigma(x))$ and the same is true for $s_{final}$ on the set $Y \setminus (W_C^- \cap f^{-1}(c-\varepsilon))$, then we have the following lemma.
 
\begin{lemma}
$s_{final}$ is continuous on $Y \setminus (W_C^- \cap f^{-1}(c-\varepsilon))$ and $f_{final}$ is continuous on $Y$.
\end{lemma}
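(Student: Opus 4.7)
The plan is to use the pasting (gluing) lemma for closed covers, noting that the two defining subsets of $Y$ are closed and that the two formulas agree on their common boundary. First I would record the preliminaries: $f$ is continuous on $Z$, and $\sigma$ is continuous on its full domain of definition by Corollary \ref{cor:sigma-continuous} together with Lemma \ref{lem:continuous-above-critical-level}. Consequently the auxiliary function $h(x) := f(x) - 2\varepsilon(1-\sigma(x))$ is continuous on $Y$, so the two sets
\begin{align*}
A &:= \{x\in Y : f(x) \le c-\varepsilon + 2\varepsilon(1-\sigma(x))\}, \\
B &:= \{x\in Y : f(x) \ge c-\varepsilon + 2\varepsilon(1-\sigma(x))\}
\end{align*}
are closed in $Y$, cover $Y$, and meet in the locus $\{h = c-\varepsilon\}$. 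On $A\cap B$ the first formula for $s_{final}$ yields $(f(x)-(c-\varepsilon))/(2\varepsilon(1-\sigma(x))) = 1$, matching the second; for $f_{final}$ the first formula gives $c-\varepsilon$ while the second gives $f(x)-2\varepsilon(1-\sigma(x)) = c-\varepsilon$, so they agree as well.

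For $f_{final}$ the two pieces are respectively the constant $c-\varepsilon$ on $A$ and the continuous function $f - 2\varepsilon(1-\sigma)$ on $B$. Both are continuous on their closed domain, they agree on $A\cap B$, and $A\cup B = Y$, so the gluing lemma gives continuity of $f_{final}$ on all of $Y$. For $s_{final}$ the second piece is again the constant $1$ on $B$, hence continuous. On $A$ the formula $(f(x)-(c-\varepsilon))/(2\varepsilon(1-\sigma(x)))$ is a quotient of continuous functions, so continuity on $A$ reduces to controlling the zero set of the denominator. If $\sigma(x) = 1$ for some $x\in A$, then the defining inequality of $A$ forces $f(x) \le c-\varepsilon$, and since $Y\subset f^{-1}([c-\varepsilon,c])$ we obtain $f(x) = c-\varepsilon$; in this case the numerator also vanishes, so the issue is a genuine $0/0$ indeterminacy rather than a blow-up.

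It remains to identify this singular locus with $W_C^-\cap f^{-1}(c-\varepsilon)$. By the extension of $\sigma$ along flow lines, $\sigma(x) = 1$ for $x\in f^{-1}(c-\varepsilon)$ means $x \in E_s$ for all $s < 1$, and together with the definition $E_1 = r(E,1) = W_C^-\cap f^{-1}(c-\varepsilon)$ and the nested structure in condition \eqref{cond:def-retract}(b),(c) this is precisely the condition $x \in W_C^-\cap f^{-1}(c-\varepsilon)$ up to the closure $\overline{E}_1$; since the lemma excises exactly $W_C^-\cap f^{-1}(c-\varepsilon)$, the denominator is nonzero on $A\setminus (W_C^-\cap f^{-1}(c-\varepsilon))$, giving continuity of the first piece on its (closed in $Y\setminus(W_C^-\cap f^{-1}(c-\varepsilon))$) domain. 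Applying the pasting lemma once more finishes the proof for $s_{final}$.

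The main obstacle is the book-keeping at the boundary where the denominator of $s_{final}$ vanishes: one has to verify that the singular locus is exhausted by (and hence excised by) $W_C^-\cap f^{-1}(c-\varepsilon)$, using the flow-invariant extension of $\sigma$ and the fact that $E_1$ is precisely $W_C^-\cap f^{-1}(c-\varepsilon)$. Once this identification is in place, both continuity statements reduce to routine applications of the closed-set gluing lemma.
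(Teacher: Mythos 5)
Your proof is correct and follows essentially the same route as the paper, which justifies the lemma in one sentence by observing that the defining formulas are continuous on their respective pieces and agree on the overlap $f(x) = c-\varepsilon + 2\varepsilon(1-\sigma(x))$; you have simply made the closed-cover pasting argument explicit and carefully located the $0/0$ locus of the first formula for $s_{final}$ inside the excised set $W_C^- \cap f^{-1}(c-\varepsilon)$. The identification of $\{\sigma = 1\}\cap f^{-1}(c-\varepsilon)$ with $E_1 = W_C^-\cap f^{-1}(c-\varepsilon)$ that you flag is used implicitly by the paper as well, so your treatment is at least as complete as the original.
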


\begin{definition}
For $s \in [0,1]$ and $x \in Y$, define 
\begin{equation*}
f_s(x) = \begin{cases} f(x) & s = 0 \\ \frac{s}{s_{final}(x)} f_{final}(x) + \left( 1 - \frac{s}{s_{final}(x)} \right) f(x) & 0 < s \leq s_{final}(x) \\ f_{final}(x) & s > s_{final}(x) \end{cases}
\end{equation*}
\end{definition}

\begin{remark}
Since $c - \varepsilon \leq f_{final}(x) \leq f(x)$ for all $x \in Y$ then $c - \varepsilon \leq f_s(x) \leq f(x)$ for all $x \in Y$ and $s \in [0,1]$.
\end{remark}

\begin{lemma}
The function $f_s(x)$ is continuous, it satisfies $f_{s_{final}(x)}(x) = f_{final}(x)$ and $f_s(x) \leq f(x)$ for all $(x,s) \in Y \times [0,1]$. 
\end{lemma}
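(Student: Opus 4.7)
The plan is to derive a single closed-form expression for $f_s(x)$ that is manifestly continuous, and from which all three assertions follow transparently. The key algebraic step is the identity
\[
f(x) - f_{final}(x) = 2\varepsilon(1-\sigma(x))\, s_{final}(x), \qquad x \in Y,
\]
which I would verify by inspecting the two branches of the piecewise definitions: on the branch $c-\varepsilon \leq f(x) < c-\varepsilon + 2\varepsilon(1-\sigma(x))$ both sides equal $f(x)-(c-\varepsilon)$, while on the complementary branch $s_{final}(x) = 1$ and both sides equal $2\varepsilon(1-\sigma(x))$.

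Using this identity, the middle case in the definition of $f_s(x)$ collapses to the affine expression $f_s(x) = f(x) - 2\varepsilon s(1-\sigma(x))$ for $0 < s \leq s_{final}(x)$, which extends continuously to $s = 0$. The same identity shows that this affine expression equals $f_{final}(x)$ precisely at $s = s_{final}(x)$ and is strictly smaller for $s > s_{final}(x)$. Consequently, for all $(x,s) \in Y \times [0,1]$,
\[
f_s(x) = \max\bigl\{ f(x) - 2\varepsilon s(1-\sigma(x)),\ f_{final}(x) \bigr\}.
\]
The degenerate case $s_{final}(x) = 0$, where the ratio $s/s_{final}(x)$ in the original middle case is undefined, is absorbed automatically: the identity forces $f(x) = f_{final}(x) = c-\varepsilon$, so the max equals $c-\varepsilon$ for every $s$, which agrees with the outer cases of the original definition.

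From this max formulation every claim is transparent. Continuity of $f_s$ on $Y \times [0,1]$ follows because $f$ is smooth, $\sigma$ is continuous on $Y$ by Corollary \ref{cor:sigma-continuous} together with the subsequent extensions of $\sigma$ along flow lines and to $C$ (via Lemma \ref{lem:continuous-above-critical-level}), $f_{final}$ is continuous on $Y$ by the preceding lemma, and the maximum of two continuous functions is continuous. Substituting $s = s_{final}(x)$ into the middle case of the original definition gives $f_{s_{final}(x)}(x) = f_{final}(x)$ directly. Finally, $\sigma \leq 1$ and $\varepsilon, s \geq 0$ yield $f(x) - 2\varepsilon s(1-\sigma(x)) \leq f(x)$, and $f_{final}(x) \leq f(x)$ is immediate from inspection of its definition, so the max is bounded above by $f(x)$. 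The only real obstacle is the degenerate locus where $s_{final}$ vanishes or may fail to be continuous (namely along $W_C^- \cap f^{-1}(c-\varepsilon)$), and the max formulation is designed precisely to bypass it.
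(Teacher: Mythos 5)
Your argument is correct, and it takes a genuinely different route from the paper's. The identity $f(x)-f_{final}(x)=2\varepsilon(1-\sigma(x))\,s_{final}(x)$ does hold on both branches, the middle case of the definition does collapse to $f(x)-2\varepsilon s(1-\sigma(x))$, and the resulting formula
\[
f_s(x)=\max\bigl\{\,f(x)-2\varepsilon s(1-\sigma(x)),\ f_{final}(x)\,\bigr\}
\]
agrees with the piecewise definition in all three cases (including the degenerate locus $s_{final}(x)=0$, where both branches of the max equal $c-\varepsilon$). Since $f$, $\sigma$ and $f_{final}$ are continuous on $Y$ and the maximum of continuous functions is continuous, joint continuity in $(x,s)$ is immediate, as are the bound $f_s(x)\leq f(x)$ and the evaluation at $s=s_{final}(x)$. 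The paper instead argues by cases: away from $W_C^-\cap f^{-1}(c-\varepsilon)$ continuity follows from continuity of $s_{final}$ and $f_{final}$, while at a point $x\in W_C^-\cap f^{-1}(c-\varepsilon)$ (where $s_{final}$ is discontinuous but $f_s(x)=c-\varepsilon$ for all $s$) it uses the squeeze $c-\varepsilon\leq f_s(y)\leq f(y)$ to get continuity by hand. Your closed-form reformulation buys a uniform treatment that absorbs both the discontinuity locus of $s_{final}$ and the division-by-zero locus $s_{final}=0$ without any separate argument, at the cost of the algebraic verification of the identity; the paper's argument is shorter on the generic locus but must single out the bad set. One tiny imprecision in your write-up: when $\sigma(x)=1$ the affine expression equals $f_{final}(x)$ for every $s$, not ``precisely at $s=s_{final}(x)$''; this does not affect the validity of the max formula, since in that case $s_{final}(x)=1$ and the third branch is vacuous on $[0,1]$.
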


\begin{proof}
The statement that $f_{s_{final}(x)}(x) = f_{final}(x)$ and $f_s(x) \leq f(x)$ for all $(x,s) \in Y \times [0,1]$ follows directly from the definition of $f_s$, and so it only remains to prove continuity. Since $s_{final}$ is continuous on  $Y \setminus (W_C^- \cap f^{-1}(c-\varepsilon))$ then the problem reduces to proving that $f_s$ is continuous at $W_C^- \cap f^{-1}(c - \varepsilon)$.

Given any $x \in W_C^- \cap f^{-1}(c- \varepsilon)$, note that $\sigma(x) = 1$ and so $f(x) \geq c - \varepsilon + 2 \varepsilon(1-\sigma(x))$, which implies that $s_{final}(x) = 1$ and $f_{final}(x) = c - \varepsilon = f(x)$. Therefore $f_s(x) = c - \varepsilon$ for all $s \in [0,1]$. 

Given $\delta > 0$, choose an open neighbourhood $U$ of $x$ in $Y$ such that $y \in U$ implies that $|f(y) - f(x)| < \delta$. Then $f(x) = c - \varepsilon \leq f_s(y) \leq f(y)$, and so $|f_s(y) - f_s(x)| < \delta$ for all $s \in [0,1]$ and all $y \in U$. Therefore $f_s(x)$ is continuous for $x \in W_C^- \cap f^{-1}(c-\varepsilon)$ and all $s \in [0,1]$.
\end{proof}

For convenience, define $y : Y \setminus C \rightarrow f^{-1}(c-\varepsilon)$ by 
\begin{equation}\label{eqn:definition-y}
y(x,s) = r \left( \phi(x, \tau_{c-\varepsilon}(x)), \min\{ s, s_{final}(x) \} \right) .
\end{equation}
where $r$ is the deformation retract of Condition \eqref{cond:def-retract} and $\tau$ is the map from Lemma \ref{lem:tau-continuous}. Note that if $f(x) \geq c - \varepsilon + 2 \varepsilon(1 - \sigma(x))$ then $s_{final}(x) = 1$ and so $y(x, 1) \in W_C^- \cap f^{-1}(c-\varepsilon)$. Now define the deformation retract $R : Y \times [0,1] \rightarrow Y$ by
\begin{equation}\label{eqn:definition-R}
R(x,s) = \begin{cases} \phi \left( y(x,s), \tau_{f_s(x)}(y(x,s)) \right) & x \in Y \setminus C \\ x & x \in C \end{cases}
\end{equation}
\begin{figure}[ht]
\begin{center}
\begin{pspicture}(0,0.5)(10,3.5)
\psline(0,1)(10,1)
\psline(3.5,3)(6.5,3)
\psline[linewidth=2pt](5,1)(5,3)
\psline(2,1)(3.5,3)
\psline(6.5,3)(8,1)
\psline[linestyle=dashed,linewidth=0.5pt](3.5,3)(0,3)
\psline[linestyle=dashed,linewidth=0.5pt](6.7,2.033)(0,2.033)
\psline[linestyle=dashed,linewidth=0.5pt](6.3,1.6)(0,1.6)
\psline[linewidth=0.5pt](6.7,2.033)(6.3,1.6)
\psline[linewidth=0.5pt](6.7,2.033)(6.7,1)
\psline[linewidth=0.5pt](6.3,1.6)(6.3,1)
\psdots(6.7,2.033)(6.3,1.6)(6.7,1)(6.3,1)
\psdots(5,3)
\uput{3pt}[0](6.7,2.033){\tiny $x$}
\uput{4pt}[305](6.7,1){\tiny $\phi(x, \tau_{c-\varepsilon}(x))$}
\uput{4pt}[240](6.3,1){\tiny $y(x,s)$}
\uput{3pt}[225](6.3,1.6){\tiny $R(x,s)$}
\uput{4pt}[90](5,3){$C$}
\uput{4pt}[45](7,2.333){$Y$}
\uput{3pt}[180](0,3){\tiny $f^{-1}(c)$}
\uput{3pt}[180](0,2.033){\tiny $f^{-1}(f(x))$}
\uput{3pt}[180](0,1.6){\tiny $f^{-1}(f_s(x))$}
\uput{3pt}[180](0,1){\tiny $f^{-1}(c-\varepsilon)$}
\end{pspicture}
\end{center}
\caption{The effect of the deformation retract $R$ on $x \in Y$.}
\end{figure}
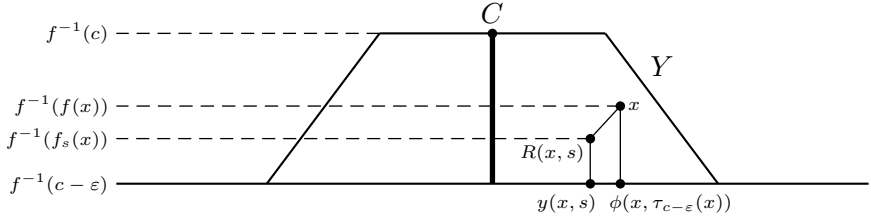

Note that $f(R(x,1)) = f_{final}(x)$ and so if $c-\varepsilon \leq f(x) < c-\varepsilon + 2\varepsilon(1-\sigma(x))$ then $R(x,1) \in f^{-1}(c-\varepsilon)$. If $f(x) \geq c-\varepsilon + 2 \varepsilon (1-\sigma(x))$ then $s_{final} = 1$ and so $R(x,1) \in W_C^-$.

\begin{figure}[ht]
\begin{center}
\begin{pspicture}(0,0.5)(10,3.5)
\psline(0,1)(10,1)
\psline(3.5,3)(6.5,3)
\psline[linewidth=2pt](5,1)(5,3)
\psline(2,1)(3.5,3)
\psline(6.5,3)(8,1)
\psline[linestyle=dashed,linewidth=0.5pt](4.5,3)(5,2.333)
\psline[linestyle=dashed,linewidth=0.5pt](4,3)(5,1.666)
\psline[linestyle=dashed,linewidth=0.5pt](3.5,3)(5,1)
\psline[linestyle=dashed,linewidth=0.5pt](3.25,2.666)(4.5,1)
\psline[linestyle=dashed,linewidth=0.5pt](3,2.333)(4,1)
\psline[linestyle=dashed,linewidth=0.5pt](2.75,2)(3.5,1)
\psline[linestyle=dashed,linewidth=0.5pt](2.5,1.666)(3,1)
\psline[linestyle=dashed,linewidth=0.5pt](2.25,1.333)(2.5,1)
\psline[linestyle=dashed,linewidth=0.5pt](5.5,3)(5,2.333)
\psline[linestyle=dashed,linewidth=0.5pt](6,3)(5,1.666)
\psline[linestyle=dashed,linewidth=0.5pt](6.5,3)(5,1)
\psline[linestyle=dashed,linewidth=0.5pt](6.75,2.666)(5.5,1)
\psline[linestyle=dashed,linewidth=0.5pt](7,2.333)(6,1)
\psline[linestyle=dashed,linewidth=0.5pt](7.25,2)(6.5,1)
\psline[linestyle=dashed,linewidth=0.5pt](7.5,1.666)(7,1)
\psline[linestyle=dashed,linewidth=0.5pt](7.75,1.333)(7.5,1)
\psdots(5,3)(8,1)(5,1)(2,1)
\uput{4pt}[90](5,3){$C$}
\uput{4pt}[45](7,2.333){$Y$}
\uput{4pt}[90](0,1){$f^{-1}(c-\varepsilon)$}
\uput{4pt}[270](8,1){\small $\sigma=0$}
\uput{4pt}[270](5,1){\small $\sigma=1$}
\uput{4pt}[270](2,1){\small $\sigma=0$}
\uput{3pt}[0](5,2.2){$W_C^-$}
\end{pspicture}
\end{center}
\caption{The deformation retract $R$ travels along the dashed lines to give a homotopy equivalence $Y \simeq f^{-1}(c-\varepsilon) \cup W_C^-$.}
\end{figure}
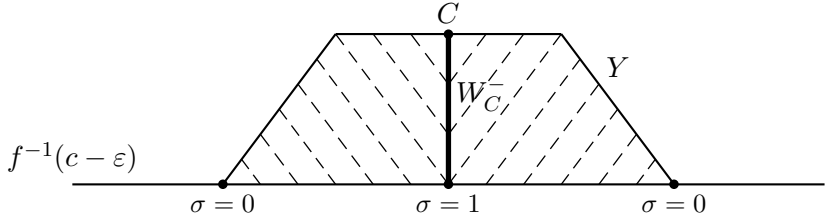

Now we can prove that $R$ is the desired deformation retract.

\begin{proposition}\label{prop:retract-to-cell-attachment}
Suppose that $f : Z \rightarrow \mathbb{R}$ satisfies conditions \eqref{cond:isolated-crit-values}--\eqref{cond:def-retract}. Then $Z_{c-\varepsilon} \cup Y \simeq Z_{c-\varepsilon} \cup W_C^-$.
\end{proposition}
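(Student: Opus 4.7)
The plan is to show that the map $R: Y \times [0,1] \to Y$ defined in \eqref{eqn:definition-R}, extended by the identity on $Z_{c-\varepsilon}$, is a strong deformation retract of $Z_{c-\varepsilon} \cup Y$ onto $Z_{c-\varepsilon} \cup W_C^-$. This requires verifying four properties: $R(\cdot, 0) = \id$, $R(Y \times \{1\}) \subset f^{-1}(c-\varepsilon) \cup W_C^-$, $R(x, s) = x$ for $x$ in the target, and continuity of $R$ on the whole domain.

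First I would dispatch the pointwise axioms. At $s = 0$, since $r(\cdot, 0) = \id$ and $f_0(x) = f(x)$, we get $R(x, 0) = \phi(\phi(x, \tau_{c-\varepsilon}(x)), \tau_{f(x)}(\phi(x, \tau_{c-\varepsilon}(x)))) = x$. At $s = 1$, the two cases in the definitions of $s_{final}$ and $f_{final}$ yield respectively $R(x, 1) \in f^{-1}(c-\varepsilon)$ (when $f_{final}(x) = c-\varepsilon$) and $R(x, 1) \in W_C^-$ (when $s_{final}(x) = 1$ and $y(x, 1) \in E_1 = W_C^- \cap f^{-1}(c-\varepsilon)$; flow-invariance of $W_C^-$ gives the claim). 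The fixed-point property on the target splits into two cases: for $x \in W_C^- \cap Y$, we have $\phi(x, \tau_{c-\varepsilon}(x)) \in E_1$ so $\sigma(x) = 1$, giving $f_s(x) = f(x)$, and since $r$ fixes $E_1$ we get $y(x, s) = \phi(x, \tau_{c-\varepsilon}(x))$ and hence $R(x, s) = x$; for $x \in Y \cap f^{-1}(c-\varepsilon) \setminus W_C^-$, direct computation gives $\sigma(x) < 1$, $s_{final}(x) = 0$, and $f_s(x) = c-\varepsilon$, again yielding $R(x, s) = x$. This last identity also ensures consistency of the extension by the identity along the overlap $Y \cap f^{-1}(c-\varepsilon)$.

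The main obstacle is continuity of $R$. Off of $C \cup (W_C^- \cap f^{-1}(c-\varepsilon))$, continuity follows from continuity of $\phi$, $\tau_\ell$, $r$, $\sigma$, $s_{final}$, and $f_s$, supplied by Lemma \ref{lem:tau-continuous}, Corollary \ref{cor:sigma-continuous}, and Condition \eqref{cond:def-retract}; the piecewise branches of $s_{final}$ and $f_{final}$ glue continuously along $\{f(x) = c-\varepsilon + 2\varepsilon(1-\sigma(x))\}$. At points of $W_C^- \cap f^{-1}(c-\varepsilon)$ where $s_{final}$ may jump, the discontinuity is absorbed because $r$ fixes $E_1$: perturbations of the clamp $\min\{s, s_{final}(x)\}$ leave $r(\phi(x, \tau_{c-\varepsilon}(x)), \cdot)$ essentially unchanged near the target. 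The delicate case is continuity at $x_0 \in C$, where $R(x_0, s) = x_0$. Lemma \ref{lem:continuous-above-critical-level} yields $\sigma(x) \to 1$ as $x \to x_0$, so $f_s(x) \to c$ uniformly in $s$. Condition \eqref{cond:hyperbolicity} ensures that $\phi(x, \tau_{c-\varepsilon}(x))$ enters every prescribed neighborhood of $W_{x_0}^- \cap f^{-1}(c-\varepsilon) \subset E_1$, so $y(x, s)$ stays close to $W_C^- \cap f^{-1}(c-\varepsilon)$ uniformly in $s$ (again using that $r$ fixes $E_1$). Then $R(x, s) = \phi(y(x, s), \tau_{f_s(x)}(y(x, s)))$ is the backward flow of a point close to $W_{x_0}^-$ evaluated at a level $f_s(x)$ tending to $c$, which must converge to $x_0$; this final step, linking proximity on the unstable manifold at level $c-\varepsilon$ to proximity to $x_0$ at level $c$, is the technical heart of the argument and is where Conditions \eqref{cond:analytic} and \eqref{cond:hyperbolicity} combine with the compactness properties of flow lines to close the proof.
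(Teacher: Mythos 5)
Your overall strategy coincides with the paper's: you analyse the same map $R$ from \eqref{eqn:definition-R}, and your treatment of the pointwise retraction axioms, of continuity away from $C\cup(W_C^-\cap f^{-1}(c-\varepsilon))$, and of continuity at $W_C^-\cap f^{-1}(c-\varepsilon)$ (uniformity of $r$ over the compact $s$-interval at a point fixed by $r$) all match the paper's argument. However, there is a genuine gap at exactly the point you flag: continuity at $x_0\in C$. You reduce it to the assertion that the backward flow of a point near $W_{x_0}^-\cap f^{-1}(c-\varepsilon)$, evaluated at a level tending to $c$, ``must converge to $x_0$,'' and you do not prove this. This is not a routine estimate: the time $\tau_{f_s(v)}(y(v,s))$ blows up as $f_s(v)\to c$, so continuity of the finite-time flow gives nothing, and proximity of $y(v,s)$ to the unstable set at level $c-\varepsilon$ does not by itself control where the long-time backward trajectory sits at levels just below $c$ (one must rule out, for instance, that it ends up near a different point of $C$, or that the convergence is non-uniform over the non-compact set $W_{x_0}^-\cap f^{-1}(c-\varepsilon)$).

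The paper closes this gap by never estimating the backward flow at all. It uses neighbourhoods of $x_0$ of the special form
\begin{equation*}
V_0 = \{\, y\in Y \; : \; f(y)>\ell_0 \ \text{and}\ \varphi_{f(y),c}(y)\in X \,\},
\end{equation*}
where $X$ is a neighbourhood of $x_0$ in $f^{-1}(c)$ and $\varphi_{\ell,c}$ is the \emph{forward}-flow map to the critical level, whose continuity was established in Proposition \ref{prop:retract-to-critical-level} via the Lojasiewicz inequality (this is where Condition \eqref{cond:analytic} actually enters). Continuity of $\varphi_{c-\varepsilon,c}$ gives a neighbourhood $U$ of $W_{x_0}^-\cap f^{-1}(c-\varepsilon)$ with $\varphi_{c-\varepsilon,c}(U)\subset X$; then Condition \eqref{cond:def-retract} and Condition \eqref{cond:hyperbolicity} put $y(v,s)$ into $U$ for $v$ near $x_0$, and since $R(v,s)$ lies on the same flow line as $y(v,s)$ at a level above $\ell_0$, one has $\varphi_{f(R(v,s)),c}(R(v,s))=\varphi_{c-\varepsilon,c}(y(v,s))\in X$, hence $R(v,s)\in V_0$. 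Checking membership in $V_0$ by flowing \emph{forward} to level $c$ is the idea your proposal is missing; without it (or an equivalent substitute) the continuity of $R$ at $C$ is not established.
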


\begin{proof}[Proof of Proposition \ref{prop:retract-to-cell-attachment}]

The proof reduces to showing that the deformation retract $R$ is continuous.

If $x \in Y \setminus \left( (W_C^- \cap f^{-1}(c-\varepsilon) ) \cup C \right)$ then this follows from the continuity of $s_{final}$, $f_{final}$ and $\tau$. Therefore the proof reduces to proving continuity at $x \in W_C^- \cap f^{-1}(c-\varepsilon)$ and $x \in C$.

\noindent \emph{Continuity at $x \in W_C^- \cap f^{-1}(c-\varepsilon)$.} 

Since $s_{final}$ is not continuous at $W_C^- \cap f^{-1}(c-\varepsilon)$ then we need to prove continuity by hand. Given any $x \in W_C^- \cap f^{-1}(c-\varepsilon)$ we need to show that for any neighbourhood $V$ of $x$ in $Y$ there exists a neighbourhood $V'$ of $x$ in $Y$ such that $(v,s) \in V' \times [0,1]$ implies that $R(v,s) \in V$. 

Since $V$ is open and the flow $\phi(\cdot,t)$ is continuous, then we can construct an open set in $V$ with a product structure as follows. Choose $\eta > 0$ and an open set $U \subset f^{-1}(c-\varepsilon)$ such that $y \in U$ implies that $\phi(y, \tau_\ell(y)) \in V$ for all $\ell \in [c-\varepsilon, c-\varepsilon + \eta)$. Define
\begin{equation*}
V_{\phi, \eta} := \left\{ z \in f^{-1}([c-\varepsilon, c-\varepsilon+\eta)) \, : \, \phi(z, \tau_{c-\varepsilon}(z)) \in U \right\} \subset V
\end{equation*}

Given such a $U \subset f^{-1}(c-\varepsilon)$, the continuity of the deformation retract $r$ from Condition \eqref{cond:def-retract} shows that for each $s \in [0,1]$ there exists $\delta_s > 0$ and a neighbourhood $U_s'$ of $x$ in $f^{-1}(c-\varepsilon)$ such that $r(v,t) \in U$ for all $(v,t) \in U_s' \times (s-\delta_s, s+\delta_s)$. Since $[0,1]$ is compact then there is a finite cover by open intervals of the form $(s_i - \delta_{s_i}, s_i + \delta_{s_i})$ for $i = 1, \ldots, n$. Therefore there exists a neighbourhood $U' = \cap_{i=1}^n U_{s_i}'$ of $x$ in $f^{-1}(c-\varepsilon)$ such that $r(v,t) \in U$ for all $v \in U'$ and $t \in [0,1]$. 

By continuity of the finite time flow, there exists an open subset $V' \subset f^{-1}([c-\varepsilon, c-\varepsilon + \eta))$ such that $v \in V'$ implies that $\phi(v, \tau_{c-\varepsilon}(v)) \in U'$ and hence $y(v, s) \in U$ for all $(v,s) \in V' \times [0,1]$. Note that $f_{s}(v) \leq f(v) < c-\varepsilon + \eta$ for all $v \in V'$ and $s \in [0,1]$ (using the same $\eta$ as above). Then $y(v,s) \in U$ for all $(v,s) \in V' \times [0,1]$ implies that $R(v,s) \subset V_{\phi, \eta} \subset V$ for all $(v,s) \in V' \times [0,1]$, completing the proof of continuity at $x \in W_C^- \cap f^{-1}(c-\varepsilon)$.

\noindent \emph{Continuity at $x \in C$.} 

Let $x \in C$ and let $V$ be any neighbourhood of $x$ in $Y$. For each $s \in [0,1]$, we want to show that there exists a neighbourhood $V' \times \left( (s-\delta, s+\delta) \cap [0,1] \right)$ of $(x,s)$ in $Y \times [0,1]$ such that $(x,s) \in V'$ implies that $R(x, s) \in V$. 

Given $V$, choose a neighbourhood $X$ of $x$ in $f^{-1}(c)$ and $\ell_0 \in (c-\varepsilon, c)$ such that 
\begin{equation*}
V_0 := \{ y \in Y \, : \, f(y) > \ell_0 \, \text{and} \, \varphi_{f(y), c}(y) \in X \} \subset V ,
\end{equation*}
where $\varphi_{f(y), c}$ is the map of level sets from Proposition \ref{prop:retract-to-critical-level}. In the proof of Proposition \ref{prop:retract-to-critical-level} we showed that $\varphi_{\ell, c} : f^{-1}(\ell) \rightarrow f^{-1}(c)$ is continuous, and so for $\ell = c-\varepsilon$ there exists a neighbourhood $U$ of $W_x^- \cap f^{-1}(c-\varepsilon)$ in $f^{-1}(c-\varepsilon)$ such that $\varphi_{\ell,c}(U) \subset V_0 \cap f^{-1}(c)$. The continuity of the deformation retract $r$ from Condition \eqref{cond:def-retract} together with the fact that $r$ is the identity on $W_x^- \cap f^{-1}(c-\varepsilon)$ shows that for each $s$ and any $\delta > 0$ there exists a neighbourhood $U'$ of $W_x^- \cap f^{-1}(c-\varepsilon)$ in $f^{-1}(c-\varepsilon)$ such that $r(y,t) \in U$ for all $y \in U'$ and $t \in (s-\delta, s+\delta)$. 

Now Condition \eqref{cond:hyperbolicity} shows that there exists a neighbourhood $V'$ of $x$ in $Y$ such that $\phi(y, \tau_{c-\varepsilon}(y)) \in U'$ for all $y \in V' \setminus C$. After shrinking $V'$ if necessary, we can assume that $s_{final} \equiv 1$ and $f_{final} > \ell_0$ on $V'$. By construction, $R$ maps $V' \times \left( (s-\delta, s+\delta) \cap [0,1] \right) \rightarrow U' \rightarrow U \rightarrow V_0 \subset V$, and so $R$ is continuous at $(x,s) \in C \times [0,1]$. 
\end{proof}

We are now ready to complete the proof of Theorem \ref{thm:main-theorem-Morse-theory}.

\begin{theorem}[Part (b) of Theorem \ref{thm:main-theorem-Morse-theory}]\label{thm:Morse-homotopy}
Suppose that $f : Z \rightarrow \mathbb{R}$ satisfies conditions \eqref{cond:isolated-crit-values}--\eqref{cond:def-retract} and suppose that $c \in (a, b)$ is the only critical value of $f$ in the interval $[a,b]$. Then $Z_b \simeq Z_a \cup W_C^-$.

Moreover, if a group $K$ acts on $Z$ such that $f$ is $K$-invariant and the deformation retract of Condition \eqref{cond:def-retract} is $K$-equivariant then the deformation retract from $Z_b$ to $Z_a \cup W_C^-$ is $K$-equivariant.
\end{theorem}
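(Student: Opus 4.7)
The plan is to deduce Theorem \ref{thm:Morse-homotopy} by concatenating the three homotopy equivalences already established in Propositions \ref{prop:retract-to-critical-level}, \ref{prop:retract-to-Y} and \ref{prop:retract-to-cell-attachment}, together with one application of Lemma \ref{lem:retract-between-critical-levels}. Since Condition \eqref{cond:isolated-crit-values} guarantees that critical values are isolated, I may shrink $\varepsilon$ if necessary so that the $\varepsilon$ of Condition \eqref{cond:def-retract} satisfies $c - \varepsilon > a$ and there are no critical values in the intervals $[a, c-\varepsilon]$ or $(c, b]$.

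First, since there are no critical values in $(c, b]$, Proposition \ref{prop:retract-to-critical-level} gives $Z_b \simeq Z_c$. Next, Proposition \ref{prop:retract-to-Y} produces the homotopy equivalence $Z_c \simeq Z_{c-\varepsilon} \cup Y$, using the set $Y = g^{-1}([c-3\varepsilon, c-\varepsilon])$ constructed from Conditions \eqref{cond:flow-alternative} and \eqref{cond:def-retract}. Then Proposition \ref{prop:retract-to-cell-attachment} supplies the retraction $Z_{c-\varepsilon} \cup Y \simeq Z_{c-\varepsilon} \cup W_C^-$; here I will note that $W_C^- \setminus Y \subset Z_{c-\varepsilon}$ (every flow line emanating from $C$ eventually drops below level $c - \varepsilon$ by Condition \eqref{cond:flow-alternative}), so the notation $Z_{c-\varepsilon} \cup W_C^-$ on the right-hand side is unambiguous.

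It remains to replace $c - \varepsilon$ by $a$. Since there are no critical values in $[a, c-\varepsilon]$, Lemma \ref{lem:retract-between-critical-levels} gives a deformation retract of $Z_{c-\varepsilon}$ onto $Z_a$ along the gradient flow. The key observation is that $W_C^-$ is flow-invariant: if $\lim_{t \to -\infty} \phi(x,t) \in C$, then the same holds for every point $\phi(x, t_0)$ on the flow line. Consequently the gradient-flow retract from Lemma \ref{lem:retract-between-critical-levels} carries $W_C^- \cap f^{-1}([a, c-\varepsilon])$ into $W_C^- \cap f^{-1}(a)$ and leaves $Z_a$ pointwise fixed, yielding $Z_{c-\varepsilon} \cup W_C^- \simeq Z_a \cup W_C^-$. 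Chaining the four equivalences gives $Z_b \simeq Z_a \cup W_C^-$.

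For the equivariance statement, I will check at each step that the construction respects the $K$-action. The gradient flow $\phi(x,t)$ is $K$-equivariant because $f$ is $K$-invariant and the ambient metric on $M$ may be taken $K$-invariant (by averaging if necessary); therefore the maps $\tau_\ell$ of Lemma \ref{lem:tau-continuous} and $\varphi_{\ell_1, \ell_2}$ of Proposition \ref{prop:retract-to-critical-level} are $K$-equivariant. The function $\sigma$ is $K$-invariant since it is built from the $K$-equivariant retract $r$ of Condition \eqref{cond:def-retract} together with the flow, so $g$, $Y$, $s_{\mathrm{final}}$, $f_{\mathrm{final}}$, $y(x,s)$ and hence $R(x,s)$ from \eqref{eqn:definition-y}--\eqref{eqn:definition-R} are all $K$-equivariant. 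The anticipated mild obstacle is simply book-keeping in the fourth step, making sure that the flow retract onto $Z_a$ is compatible (as a deformation retract, not merely a set map) with the identity on $W_C^- \cap Z_a$; this is handled by using the same flow-time reparametrisation that defines the retract in Lemma \ref{lem:retract-between-critical-levels}, which commutes with $K$ and fixes points already below level $a$.
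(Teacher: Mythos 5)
Your proposal is correct and follows essentially the same route as the paper: chain the homotopy equivalences of Propositions \ref{prop:retract-to-critical-level}, \ref{prop:retract-to-Y} and \ref{prop:retract-to-cell-attachment}, then observe that each constituent retract is built from the $K$-equivariant flow and the $K$-equivariant retract of Condition \eqref{cond:def-retract}. Your fourth step (passing from $Z_{c-\varepsilon}\cup W_C^-$ to $Z_a\cup W_C^-$ via Lemma \ref{lem:retract-between-critical-levels} and the flow-invariance of $W_C^-$) is a routine reduction the paper leaves implicit, and you are right to spell it out.
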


\begin{proof}
Combining the deformation retracts from Propositions \ref{prop:retract-to-critical-level}, \ref{prop:retract-to-Y} and \ref{prop:retract-to-cell-attachment} proves the first statement of the theorem.

If $f$ is $K$-invariant then the gradient flow is $K$-equivariant. If the deformation retract of Condition \eqref{cond:def-retract} is $K$-equivariant then the composition of deformation retracts from Propositions \ref{prop:retract-to-critical-level}, \ref{prop:retract-to-Y} and \ref{prop:retract-to-cell-attachment} is also $K$-equivariant, which proves the second statement of the theorem.
\end{proof}

\section{Conditions \eqref{cond:isolated-crit-values}--\eqref{cond:hyperbolicity} for the norm-square of a moment map on an affine variety}\label{sec:moment-map}

Let $G$ be a connected complex reductive algebraic group, and let $Z$ be an affine variety with an algebraic action of $G$. A result of Kempf \cite[Lemma 1.1]{Kempf78} shows that there is a representation $V$ of $G$ and a $G$-equivariant isomorphism from $Z$ to a closed affine subvariety of $V$ (which we also denote by $Z$), and so we can assume without loss of generality that $Z \subset V$ is an affine subvariety with a $G$-action induced from the linear action of $G$ on $V$. 

Let $K \subset G$ be the maximal compact subgroup, choose a $K$-invariant Hermitian inner product $\left< \cdot, \cdot \right>$ on $V$ and let $\omega(\cdot, \cdot) = \Im \left< \cdot, \cdot \right>$. Let $\rho_x : \mathfrak{g} \rightarrow T_x V \cong V$ denote the infinitesimal action of $G$ at any $x \in V$. Then the action of $K$ on $V$ is Hamiltonian with a moment map $\mu : V \rightarrow \mathfrak{k}^*$ given by
\begin{equation*}
\mu(x) \cdot u = \frac{1}{2} \omega(\rho_x(u), x) \quad \text{for all $u \in \mathfrak{k}$. }
\end{equation*}
It is easy to check that this satisfies the moment map equation $d \mu_x(X) \cdot u = \omega(\rho_x(u), X)$ (see for example \cite{Sjamaar98}). For any central element $\alpha \in Z(\mathfrak{k}^*)$ the function $\mu(x) - \alpha$ also satisfies the moment map equation; in the following we abuse the notation and absorb this constant into the function $\mu$. Define $f = \| \mu \|^2 : V \rightarrow \R$. Then the gradient flow of $f$ with initial condition $x \in V$ has the form $\phi(x,t) = g(t) \cdot x$ where $g(t) \in G$ satisfies $\frac{dg}{dt} g^{-1} = -i \mu(g(t) \cdot x)$ and $g(0) = \id$, and so $Z$ is preserved by the gradient flow, since it is preserved by $G$.

The goal of this section is to show that Conditions \eqref{cond:isolated-crit-values}--\eqref{cond:hyperbolicity} are satisfied for $f : Z \rightarrow \R$ in the setting described above. 

First we prove a general result showing that if Conditions \eqref{cond:isolated-crit-values}--\eqref{cond:hyperbolicity} are satisfied for $f : Z_1 \rightarrow \R$, then they are satisfied on any closed subset $Z_2 \subset Z_1$ preserved by the flow. In the case of moment maps on an affine variety described above, this lemma shows that it is sufficient to check Conditions \eqref{cond:isolated-crit-values}--\eqref{cond:hyperbolicity} for the ambient affine space $V$.

\begin{lemma}\label{lem:restrict-to-closed-subset}
Let $f : Z_1 \rightarrow \R$ be a function satisfying Conditions \eqref{cond:isolated-crit-values}--\eqref{cond:hyperbolicity} and let $Z_2 \subset Z_1 \subset M$ be any closed subset preserved by the gradient flow of $f$. Then $f : Z_2 \rightarrow \R$ satisfies Conditions \eqref{cond:isolated-crit-values}--\eqref{cond:hyperbolicity}. 
\end{lemma}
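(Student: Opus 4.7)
The plan is to show that each of Conditions \eqref{cond:isolated-crit-values}--\eqref{cond:hyperbolicity} is inherited from $Z_1$ to $Z_2$, in increasing order of difficulty. Conditions \eqref{cond:isolated-crit-values} and \eqref{cond:analytic} are essentially immediate: since $Z_2$ is preserved by the flow and critical points are by definition stationary points of the flow lying in the space, we have $\Crit_{Z_2}(f) = Z_2 \cap \Crit_{Z_1}(f)$, so the critical values of $f : Z_2 \to \R$ form a subset of those of $f : Z_1 \to \R$ and automatically inherit isolation, while Condition \eqref{cond:analytic} is a statement about $M$ and $f$ alone, independent of the subset.

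For Condition \eqref{cond:flow-alternative}, given $a<b$ which are not critical values of $f : Z_2 \to \R$, I would use isolation of critical values of $f : Z_1 \to \R$ to choose $a'<a$ and $b'>b$ close enough to $a$ and $b$ so that $a'$ and $b'$ are not critical values of $Z_1$ and the intervals $(a',a)$ and $(b,b')$ contain no critical values of $Z_1$. For $x\in Z_2$ with $f(x)\in(a,b)\subset(a',b')$, Condition \eqref{cond:flow-alternative} applied in $Z_1$ yields one of two outcomes in forward time: either the flow reaches level $a'$ at some finite $T_+$, in which case the intermediate value theorem applied to $t\mapsto f(\phi(x,t))$ produces a time in $(0,T_+)$ at which the flow crosses level $a$, and this crossing point lies in $Z_2$ by flow-invariance; or the limit $x_\infty:=\lim_{t\to\infty}\phi(x,t)$ exists in $f^{-1}((a',b'))$, in which case $x_\infty\in Z_2$ by closedness, $x_\infty\in\Crit_{Z_2}(f)$ by stationarity, and the estimate $f(x_\infty)\leq f(x)<b$ combined with the absence of critical values of $Z_1$ in $(a',a)$ and the non-criticality of $a$ for $Z_2$ forces $f(x_\infty)\in(a,b)$. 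The backward direction is symmetric.

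The main obstacle is Condition \eqref{cond:hyperbolicity}, because the critical set $C_1:=\Crit_{Z_1}(f)\cap f^{-1}(f(x))$ may strictly contain $C:=\Crit_{Z_2}(f)\cap f^{-1}(f(x))$, and the given open neighbourhood $U$ of $W_x^-\cap f^{-1}(a)$ only controls points in $Z_2$. Two preparatory observations make it possible to reduce to Condition \eqref{cond:hyperbolicity} for $Z_1$: first, non-minimality of $x$ transfers from $Z_2$ to $Z_1$ because any point of $Z_2$ with smaller $f$-value also lies in $Z_1$; and second, flow-invariance together with closedness of $Z_2$ gives the identifications $W_{x,Z_2}^- = W_{x,Z_1}^- \cap Z_2$ and $W_{C,Z_2}^+ = W_{C_1,Z_1}^+ \cap Z_2$, because a flow starting in $Z_2$ stays in $Z_2$ and any limit of such a flow remains in the closed set $Z_2$.

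With these in hand, I would use isolation of critical values of $Z_1$ to choose $a'\in(a,f(x))$ with no critical values of $Z_1$ in $[a',f(x))$, and pull back $U$ along the flow to an open set $U'\subset f^{-1}(a')\cap Z_2$ containing $W_{x,Z_2}^-\cap f^{-1}(a')$; this pull-back is well-defined because Condition \eqref{cond:flow-alternative} for $Z_2$ (just verified) makes the flow from level $a'$ to level $a$ a homeomorphism of $f^{-1}(a')\cap Z_2$ onto $f^{-1}(a)\cap Z_2$. Writing $U'=U_1'\cap Z_2$ for some open $U_1'\subset f^{-1}(a')\cap Z_1$, I form the enlarged open set
\begin{equation*}
\tilde{U}_1' := U_1' \cup \bigl( (f^{-1}(a')\cap Z_1) \setminus Z_2 \bigr),
\end{equation*}
which is open in $f^{-1}(a')\cap Z_1$ since $Z_2$ is closed, contains $W_{x,Z_1}^-\cap f^{-1}(a')$ by decomposition over $Z_2$ and its complement, and satisfies $\tilde{U}_1'\cap Z_2=U'$. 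Condition \eqref{cond:hyperbolicity} for $Z_1$ applied at level $a'$ with neighbourhood $\tilde{U}_1'$ produces a neighbourhood $V_1$ of $x$ in $Z_1$; setting $V:=V_1\cap Z_2$, any $y\in V\setminus W_{C,Z_2}^+$ lies in $V_1\setminus W_{C_1,Z_1}^+$, flows into $\tilde{U}_1'\cap Z_2=U'$ in finite time, and then into $U$ after flowing from level $a'$ down to level $a$, which completes the verification.
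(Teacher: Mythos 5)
Your proof is correct, and the key step --- enlarging the given neighbourhood $U$ of the unstable set inside $f^{-1}(\cdot)\cap Z_2$ to an open subset of the corresponding level set in $Z_1$ by adjoining the complement of the closed set $Z_2$, applying Condition \eqref{cond:hyperbolicity} for $Z_1$, and intersecting the resulting neighbourhood of $x$ back with $Z_2$ --- is exactly the device used in the paper, where it is phrased as $U' = (f^{-1}(a)\cap Z_1)\setminus X$ with $X = (f^{-1}(a)\cap Z_2)\setminus U$. The only difference is that you additionally shift to auxiliary levels ($a'$ in Condition \eqref{cond:hyperbolicity}, $a',b'$ in Condition \eqref{cond:flow-alternative}) to account for critical values of $Z_1$ that are not critical values of $Z_2$, and you make explicit the identifications $W_{x,Z_2}^- = W_{x,Z_1}^-\cap Z_2$ and $W_{C,Z_2}^+ = W_{C_1,Z_1}^+\cap Z_2$; these are points the paper's proof passes over silently, and your extra care is welcome but does not change the argument.
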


\begin{proof}
\begin{enumerate}

\item Since the critical values for $f : Z_1 \rightarrow \R$ are isolated, then the critical values for the restriction $f : Z_2 \rightarrow \R$ are also isolated.

\item Since $Z_2$ is preserved by the flow, then $x \in Z_2$ implies that $\phi(x,t) \in Z_2$ for all $t$. If $x \in Z_2$ and the limit $\lim_{t \rightarrow \infty} \phi(x,t)$ exists then it is contained in $Z_2$ since $Z_2 \subset Z_1$ is closed, and the same is true for $\lim_{t \rightarrow - \infty} \phi(x,t)$ if it exists.

\item The condition that the ambient manifold $M$ is analytic and $f : M \rightarrow \R$ is analytic is independent of $Z_1$ and $Z_2$.

\item Let $x \in Z_2$ be a critical point for $f : Z_2 \rightarrow \R$, and choose any $a < f(x)$ such that there are no critical values in $[a, f(x))$. Given any neighbourhood $U$ of $W_x^- \cap f^{-1}(a)$ in $f^{-1}(a) \cap Z_2$, let $X = (f^{-1}(a) \cap Z_2) \setminus U$. Then $X$ is closed in $Z_2$, so it is also closed as a subset of $Z_1$. Let $U' = (f^{-1}(a) \cap Z_1) \setminus X \subset Z_1$. Note that $U = U' \cap Z_2$. Then $U'$ is open in $f^{-1}(a)$ and so Condition \eqref{cond:hyperbolicity} on the space $Z_1$ shows that there exists an open neighbourhood $V'$ of $x$ in $Z_1$ such that $V' \setminus W_C^+$ flows into $U'$. Therefore, since the flow preserves $Z_2$, then $V := (V' \setminus W_C^+) \cap Z_2$ flows into $U = U' \cap Z_2$, and so Condition \eqref{cond:hyperbolicity} is satisfied for the restriction of $f : Z_2 \rightarrow \R$. \qedhere
\end{enumerate}
\end{proof}

Next we show that Conditions \eqref{cond:isolated-crit-values}--\eqref{cond:hyperbolicity} are satisfied for the norm-square of a moment map associated to a linear action. Combined with Lemma \ref{lem:restrict-to-closed-subset}, this shows that Conditions \eqref{cond:isolated-crit-values}--\eqref{cond:hyperbolicity} are satisfied for the norm-square of a moment map on any affine variety.

\begin{proposition}\label{prop:conditions1-4}
Let $G$ be a connected reductive Lie group acting linearly on $M = \C^n$ or $M = \C P^n$ and suppose that the action of the maximal compact subgroup $K$ is Hamiltonian with respect to the standard symplectic structure on $M$. Let $\mu : M \rightarrow \mathfrak{k}^*$ be a moment map for this action and define $f : M \rightarrow \R$ by $f(x) = \| \mu(x) \|^2$. Then Conditions \eqref{cond:isolated-crit-values}--\eqref{cond:hyperbolicity} are satisfied for $f : M \rightarrow \R$.
\end{proposition}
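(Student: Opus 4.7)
The plan is to verify each of Conditions \eqref{cond:isolated-crit-values}--\eqref{cond:hyperbolicity} in turn, handling $M = \mathbb{C}^n$ and $M = \mathbb{CP}^n$ in parallel.

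Condition \eqref{cond:analytic} is automatic: for the linear $K$-action on $\mathbb{C}^n$ the infinitesimal action $\rho_x(u)$ is linear in $x$, so $\mu(x) \cdot u = \tfrac{1}{2}\omega(\rho_x(u), x)$ is a homogeneous quadratic polynomial in $x$ and hence $f = \|\mu\|^2$ is polynomial; on $\mathbb{CP}^n$ the moment map descends from a real analytic function on $\mathbb{C}^{n+1}\setminus\{0\}$ to a real analytic function on the real analytic manifold $\mathbb{CP}^n$. Condition \eqref{cond:isolated-crit-values} I would obtain by invoking Kirwan's description of $\Crit(f)$ in \cite[Sec. 4]{Kirwan84}: fixing a maximal torus $T \subset K$, one has $\Crit(f) = \bigsqcup_{\beta \in \mathcal{B}} C_\beta$ with $\mathcal{B}$ an explicit finite subset of $\mathfrak{t}$ determined by the weights of the representation, and $f \equiv \|\beta\|^2$ on $C_\beta$. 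This yields a finite set of critical values.

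Condition \eqref{cond:flow-alternative} is the main analytic step and rests on the convergence theorem of Sjamaar \cite{Sjamaar98}: for any $x \in M$ the forward flow $\phi(x,t)$ converges as $t \to \infty$ to a critical point $x_\infty$. For $x \in f^{-1}((a,b))$ with no critical values at $a$ or $b$, monotonicity of $f$ along $\phi$ combined with Sjamaar's theorem gives either $f(x_\infty) \in (a,b)$ (convergence in $f^{-1}((a,b))$) or $f(x_\infty) \leq a$ (a finite crossing time $T_+$ from the intermediate value theorem). For the backward direction, I would use that $\phi(x,t) = g(t) \cdot x$ with $g(t) \in G$ is defined for all $t \in \mathbb{R}$, so the same dichotomy can be posed; if $f(\phi(x,t)) < b$ for all $t \leq 0$ then one needs to show the backward trajectory stays bounded, which I would establish by exploiting the linearity of the action and the weight decomposition to bound $\|\phi(x,t)\|$ in terms of $f(\phi(x,t))$ on each isotypic component away from $\mu^{-1}(0)$; the Lojasiewicz gradient inequality applied to the analytic function $f$ along the bounded backward trajectory (as in \cite{Simon83}) then forces convergence in $f^{-1}((a,b))$.

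Condition \eqref{cond:hyperbolicity} is a consequence of the minimal degeneracy of $\|\mu\|^2$ established by Kirwan \cite[Sec. 10]{Kirwan84}. Near each critical point $x \in C_\beta$ there is a local splitting of the tangent space into stable and unstable directions for the Hessian of $f$ on a suitable minimising submanifold, and the flow is hyperbolic in adapted coordinates. A point $y$ close to $x$ but not in $W_C^+$ must have a nonzero unstable component, whose exponential growth together with the stable component's exponential decay forces $\phi(y,t)$ to approach $W_x^-$ before exiting the local model; shrinking $V$ then places $\phi(y,T) \in U$ for the time $T>0$ at which the trajectory crosses the level $a$. The main obstacle I expect is the backward-flow half of Condition \eqref{cond:flow-alternative}: since Sjamaar's theorem has no off-the-shelf analog running backward in time, establishing that the backward trajectory is bounded whenever $f$ stays away from $b$ requires a careful analysis of how $\|g(t) \cdot x\|$ can grow as $t \to -\infty$ along the non-minimising directions of the $G$-action.
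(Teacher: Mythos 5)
Your treatment of Conditions \eqref{cond:isolated-crit-values}, \eqref{cond:analytic} and \eqref{cond:hyperbolicity} matches the paper's in substance (for \eqref{cond:hyperbolicity} the paper makes the ``exponential growth of the unstable component'' heuristic rigorous by citing topological conjugacy of the nonlinear flow with its linearisation, \cite[Theorem 4.1]{HirschPughShub77}, which is the step you should invoke rather than argue informally; and for \eqref{cond:isolated-crit-values} on $\C^n$ the relevant reference is Hoskins \cite{Hoskins14} rather than Kirwan's projective construction). The genuine gap is exactly where you flag it: the backward half of Condition \eqref{cond:flow-alternative} on $M = \C^n$. Your proposed route --- a bound of $\|\phi(x,t)\|$ in terms of $f(\phi(x,t))$ on isotypic components ``away from $\mu^{-1}(0)$'' --- cannot work as a global statement: already for $\C^*$ acting on $\C^2$ with weights $\pm 1$ the level set $\{|z_1|^2 - |z_2|^2 = 1\}$ is unbounded while $f \equiv 1/4$ on it, so no such bound holds on $M$. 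The correct ingredient, which the paper uses, is Sjamaar's Lemma 4.10 in \cite{Sjamaar98}: $\|\mu\|^2$ is proper \emph{when restricted to a single $G$-orbit}. Since the entire trajectory $\phi(x,t) = g(t)\cdot x$ lies in $G \cdot x$, the portion of the trajectory (forward or backward) on which $f$ takes values in $[a,b]$ is contained in a compact set, and then the Lojasiewicz argument of \cite{Simon83} gives the dichotomy of Condition \eqref{cond:flow-alternative} in both time directions at once --- there is no need for a separate backward-in-time convergence theorem. With that substitution your outline closes; without it the backward case remains unproved.

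Note also that once orbit-wise properness is in hand, the forward direction no longer needs to be quoted as a separate convergence theorem: the paper derives both directions uniformly from compactness of the flow-line segment plus analyticity of $f$, which is a cleaner and more symmetric argument than the forward/backward split you propose.
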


\begin{proof}

\begin{enumerate}

\item For projective varieties Condition \eqref{cond:isolated-crit-values} follows from Kirwan's explicit construction of the critical values in \cite{Kirwan84}. For affine varieties this follows from the analogous construction in \cite{Hoskins14}.

\item In the case $M = \C P^n$, the flow $\phi(x,t)$ exists for all $t \in \R$ since $M$ is compact. In particular, for any $a < b$ the subset $f^{-1}([a,b])$ is compact. Since $f$ is analytic (see below) then the Lojasiewicz inequality method of Simon \cite{Simon83} shows that the flow with initial condition $x \in f^{-1}([a,b])$ either converges to a critical value $c \in [a, b]$ or it flows out of the set $f^{-1}([a, b])$. This is true both forwards and backwards in time and so Condition \eqref{cond:flow-alternative} is satisfied.

For the case $M = \C^n$, the result of Sjamaar \cite[Lemma 4.10]{Sjamaar98} shows that the function $f = \| \mu \|^2$ is proper on each $G$-orbit. Since the gradient flow $\phi(x, t)$ is contained in $G \cdot x$, then for any $a < b$, the intersection of the gradient flow line with $f^{-1}([a,b])$ is compact. Therefore the same proof as the case $M = \C P^n$ shows that Condition \eqref{cond:flow-alternative} is also satisfied for $M = \C^n$.

\item The moment map associated to a linear representation is analytic, and so Condition \eqref{cond:analytic} is satisfied.

\item The function $f = \| \mu \|^2$ is minimally degenerate (see \cite{Kirwan84}). Therefore, for each critical point $x$, the unstable manifold $W_x^-$ has the structure of a graph over the negative eigenspace of the Hessian (see for example \cite{HirschPughShub77}). Around each critical point $x$, there is a neighbourhood $U$ and coordinates $(x_1, \ldots, x_n)$ such that the minimising manifold is given by $x_1 = \cdots = x_d = 0$, where $d$ is the number of non-negative eigenvalues of the Hessian of $f$ at $x$. Let $y = (x_1, \ldots, x_d)$ and $z = (x_{d+1}, \ldots, x_n)$. Following \cite[Ch. 10]{Kirwan84}, the negative gradient flow is given locally by
\begin{equation}\label{eqn:local-grad-flow}
\phi(x(0), t) = \left( \begin{matrix} y(t) \\ z(t) \end{matrix} \right) = \left( \begin{matrix} e^{Pt} y(0) + Y(t, y(0), z(0)) \\ e^{Qt} z(0) + Z(t, y(0), z(0)) \end{matrix} \right)
\end{equation}
where $P$ is diagonal with non-positive entries, $Q$ is diagonal with strictly positive entries, and $Y, Z$ and their first derivatives vanish at the origin. Clearly the linearised flow $y(t) = e^{Pt} y(0)$, $z(t) = e^{Qt} z(0)$ satisfies Condition \eqref{cond:hyperbolicity}, since the term $e^{Pt}$ does not increase the distance from the origin, while the term $z(t)$ strictly increases the distance from the origin. 

Then Condition \eqref{cond:hyperbolicity} for the nonlinear equation \eqref{eqn:local-grad-flow} follows from the fact that the flow is topologically conjugate to the linearised flow in a neighbourhood of the critical point $x$ (see \cite[Theorem 4.1]{HirschPughShub77}). \qedhere
\end{enumerate}

\end{proof}

In the remainder of the section, we show that Conditions \eqref{cond:isolated-crit-values}--\eqref{cond:hyperbolicity} impose extra conditions on the topology of the level sets of the function near a critical point.

Let $c$ be a critical value. Conditions \eqref{cond:isolated-crit-values} and \eqref{cond:flow-alternative} imply that if there are no critical values in $[\ell, c)$ then there is a well-defined function $\varphi_{\ell, c} : f^{-1}(\ell) \rightarrow f^{-1}(c)$ (defined in Proposition \ref{prop:retract-to-critical-level}) given by the gradient flow $\phi(x,t)$. Proposition \ref{prop:retract-to-critical-level} shows that if Condition \eqref{cond:analytic} is also satisfied then this map $\varphi_{\ell,c}$ is continuous. 

Define the equivalence relation $x_1 \sim x_2$ iff $\varphi_{\ell,c}(x_1) = \varphi_{\ell,c}(x_2)$, denote the quotient by $L_{\ell} = f^{-1}(\ell) / \negthickspace \sim$ with quotient map $q : f^{-1}(\ell) \rightarrow L_\ell$ and let $\xi_{\ell,c} : L_\ell \rightarrow f^{-1}(c)$ be the induced injective map
\begin{equation*}
\xymatrix{
f^{-1}(\ell) \ar[r]^{\varphi_{\ell,c}} \ar[d]^{q} & f^{-1}(c) \\
L_\ell \ar[ur]_{\xi_{\ell,c}} 
}
\end{equation*}
The next result shows that adding Condition \eqref{cond:hyperbolicity} imposes more structure on the maps $\varphi_{\ell,c}$ and $\xi_{\ell,c}$.

\begin{lemma}\label{lem:level-set-map-closed}
Let $f : Z \rightarrow \R$ be a function satisfying Conditions \eqref{cond:isolated-crit-values}--\eqref{cond:hyperbolicity}, let $c$ be a non-minimal critical value and choose any $\ell < c$ such that there are no critical values in $[\ell, c)$. Then the map $\varphi_{\ell,c}$ is closed and $\xi_{\ell,c}$ is a homeomorphism onto its image.
\end{lemma}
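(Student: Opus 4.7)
The plan is to prove the closedness of $\varphi_{\ell,c}$ directly via the standard criterion: for each $y \in f^{-1}(c)$ and each open set $U \subset f^{-1}(\ell)$ with $\varphi_{\ell,c}^{-1}(y) \subset U$, produce an open neighbourhood $V$ of $y$ in $f^{-1}(c)$ with $\varphi_{\ell,c}^{-1}(V) \subset U$. Part (b) will then follow by a short categorical argument. The argument splits into the cases $y \notin C$ and $y \in C$, with all the work concentrated in the latter. If $y \notin C$, then $\varphi_{\ell,c}^{-1}(y)$ reduces to the singleton $\{\phi(y, \tau_\ell(y))\}$; the continuity of $\tau_\ell$ on $f^{-1}(c) \setminus C$ from Lemma \ref{lem:tau-continuous}, together with the closedness of $C$ in $f^{-1}(c)$, then supplies a small $V \subset f^{-1}(c) \setminus C$ with the required property.

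For $y \in C$, I would apply Condition \eqref{cond:hyperbolicity} at $y$ with $a = \ell$ to obtain an open neighbourhood $V' \subset Z$ of $y$ such that every $z \in V' \setminus W_C^+$ satisfies $\phi(z, T_z) \in U$ for some $T_z > 0$, and then set $V = V' \cap f^{-1}(c)$. I need to check $\varphi_{\ell,c}^{-1}(v) \subset U$ for each $v \in V$. If $v \in V \setminus C$, then $v$ itself lies in $V' \setminus W_C^+$, since the forward flow from $v$ strictly decreases $f$ below $c$ and hence cannot converge to $C$. Condition \eqref{cond:hyperbolicity} then gives $\phi(v, T_v) \in U$, and since $U \subset f^{-1}(\ell)$ strict monotonicity of $f$ along the forward orbit forces $T_v = \tau_\ell(v)$, so $\varphi_{\ell,c}^{-1}(v) = \{\phi(v, \tau_\ell(v))\} \subset U$.

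The delicate subcase is $v \in V \cap C$, where $\varphi_{\ell,c}^{-1}(v) = W_v^- \cap f^{-1}(\ell)$ need not be a single point and Condition \eqref{cond:hyperbolicity} cannot be applied at $v$ itself because $v \in C \subset W_C^+$. The idea is to transport the hypothesis along the backward flow of an arbitrary $x \in W_v^- \cap f^{-1}(\ell)$: since $\phi(x, t) \to v \in V'$ as $t \to -\infty$, there exists a finite $t_0 < 0$ with $z := \phi(x, t_0) \in V'$. The flow line from $x$ cannot pass through a critical point in finite time, so $f(z) \in (\ell, c)$ and the forward orbit of $z$ has $f$-values bounded above by $f(z) < c$, giving $z \notin W_C^+$. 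Applying Condition \eqref{cond:hyperbolicity} to $z \in V' \setminus W_C^+$ yields $T > 0$ with $\phi(z, T) \in U$; strict monotonicity of $f$ along the forward orbit of $z$ identifies $T$ with $\tau_\ell(z) = -t_0$, so $x = \phi(z, -t_0) \in U$. Hence $W_v^- \cap f^{-1}(\ell) \subset U$, completing the proof that $\varphi_{\ell,c}$ is closed.

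Finally, for (b): $\xi_{\ell,c}$ is continuous by the universal property of the quotient map $q$ and injective by the construction of $\sim$. For any closed $F \subset L_\ell$, the preimage $q^{-1}(F)$ is closed in $f^{-1}(\ell)$, so $\xi_{\ell,c}(F) = \varphi_{\ell,c}(q^{-1}(F))$ is closed in $f^{-1}(c)$ by part (a), hence closed in the subspace $\xi_{\ell,c}(L_\ell)$. Thus $\xi_{\ell,c}$ is a continuous injective closed map onto its image and therefore a homeomorphism onto its image. The main obstacle throughout is the subcase $v \in V \cap C$, where Condition \eqref{cond:hyperbolicity} is vacuous at $v$ itself and must be applied indirectly through a non-critical point on the backward trajectory of each $x \in W_v^- \cap f^{-1}(\ell)$.
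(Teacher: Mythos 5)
Your proof is correct and follows essentially the same route as the paper: Condition \eqref{cond:hyperbolicity} at the critical level, continuity of $\tau_\ell$ away from the critical set, and the quotient topology argument for the statement about $\xi_{\ell,c}$. If anything, your handling of the subcase $v \in V \cap C$ --- transporting Condition \eqref{cond:hyperbolicity} along the backward orbit of each $x \in W_v^- \cap f^{-1}(\ell)$ to a non-critical point of $V'$ so as to conclude that the \emph{entire} fibre over $v$ lies in $U$ --- spells out a point that the paper's chain of inclusions $V \subset \varphi_{\ell,c}(U) \subset f^{-1}(c) \setminus \varphi_{\ell,c}(A)$ passes over silently (the last inclusion needs exactly your fibre argument, since $\varphi_{\ell,c}$ is not injective over the critical set).
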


\begin{proof}
Let $A \subset f^{-1}(\ell)$ be closed. We aim to show that $\varphi_{\ell,c}(A)$ is closed. Let $x \in f^{-1}(c) \setminus \varphi_{\ell,c}(A)$. First consider the case where $x \in C:= \Crit_Z(f) \cap f^{-1}(c)$ is a critical point. Since $\varphi_{\ell,c}$ is continuous then $\varphi_{\ell,c}^{-1}(\{x\}) = W_x^- \cap f^{-1}(\ell)$ is closed in $f^{-1}(\ell)$, and by definition it is contained in $f^{-1}(\ell) \setminus A$, which is open. Therefore there is an open neighbourhood $U$ of $W_x^- \cap f^{-1}(\ell)$ contained in $f^{-1}(\ell) \setminus A$. Condition \eqref{cond:hyperbolicity} implies that there exists an open neighbourhood $V$ of $x$ in $f^{-1}(c)$ such that 
\begin{equation*}
V \subset \varphi_{\ell,c}(U) \subset \varphi_{\ell,c}(f^{-1}(\ell) \setminus A) \subset f^{-1}(c) \setminus \varphi_{\ell,c}(A)
\end{equation*}
Since the flow defines a continuous map $f^{-1}(c) \setminus C \rightarrow f^{-1}(\ell) \setminus W_C^-$, then if $x \notin C$ there exists an open neighbourhood $U$ of $x$ in $f^{-1}(c) \setminus C$ such that $\phi(y, \tau_\ell(y)) \in f^{-1}(\ell) \setminus A$ for all $y \in U$. Therefore $U \subset f^{-1}(c) \setminus \varphi_{\ell, c}(A)$.

Therefore $f^{-1}(c) \setminus \varphi_{\ell,c}(A)$ is open, and so $\varphi_{\ell,c}(A)$ is closed.

To see that $\xi_{\ell,c}$ is a homeomorphism onto its image, it is sufficient to show that it is injective, closed and continuous. It is injective by definition, and continuity and closedness follow from the associated properties of $\varphi_{\ell,c}$, since $L_{\ell}$ has the quotient topology.
\end{proof}

The following corollary of Lemma \ref{lem:level-set-map-closed} shows that Condition \eqref{cond:hyperbolicity} implies that the set of critical points with trivial unstable set (i.e. local minima) must be open in the level set.  
\begin{corollary}\label{cor:index-zero-open}
With the same conditions as Lemma \ref{lem:level-set-map-closed}, the set of critical points $x \in C$ such that $W_x^- = \{x\}$ is an open subset of $f^{-1}(c)$.
\end{corollary}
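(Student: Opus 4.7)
The strategy is to exhibit the set $S := \{x \in C : W_x^- = \{x\}\}$ as the complement in $f^{-1}(c)$ of an image of $\varphi_{\ell,c}$, and then invoke the closed-map property established in Lemma \ref{lem:level-set-map-closed}. Concretely, by Condition \eqref{cond:isolated-crit-values} there exists $\ell < c$ with no critical values in $[\ell,c)$. Setting $A := f^{-1}(\ell)$, Lemma \ref{lem:level-set-map-closed} gives that $\varphi_{\ell,c}(A)$ is closed in $f^{-1}(c)$, so it will suffice to establish the identity $S = f^{-1}(c) \setminus \varphi_{\ell,c}(A)$.

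To prove this identity, I will first observe that every non-critical point $y \in f^{-1}(c) \setminus C$ automatically lies in $\varphi_{\ell,c}(A)$: picking any non-critical value $b \in (\ell,c)$ (which exists by Condition \eqref{cond:isolated-crit-values}), Condition \eqref{cond:flow-alternative} applied to the forward flow of $y$ on the interval $[\ell,b]$ forces the flow either to reach level $\ell$ in finite time or to converge to a critical point with critical value in $(\ell,b)$; the latter is excluded by our choice of $\ell$, and so $y \in \varphi_{\ell,c}(A)$. Consequently $f^{-1}(c) \setminus \varphi_{\ell,c}(A) \subseteq C$, and for $y \in C$ the condition $y \notin \varphi_{\ell,c}(A)$ is simply $W_y^- \cap A = \emptyset$, so what remains is to prove that for $y \in C$,
\[
W_y^- \cap f^{-1}(\ell) = \emptyset \iff W_y^- = \{y\}.
\]
The backward implication is immediate since $\ell \neq c$. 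For the forward implication, suppose some $z \in W_y^-$ has $z \neq y$; then $f(z) < c$ because $f$ strictly decreases along the non-stationary flow line from $y$ to $z$. If $f(z) \leq \ell$, the intermediate value theorem applied to $t \mapsto f(\phi(z,t))$ on $(-\infty,0]$, together with the $t \to -\infty$ limit $f(\phi(z,t)) \to c$, produces a time with $f(\phi(z,t_0)) = \ell$, and $\phi(z,t_0)$ still belongs to $W_y^-$ by flow invariance. If instead $f(z) \in (\ell,c)$, pick a non-critical $b \in (f(z),c)$ and apply Condition \eqref{cond:flow-alternative} on $[\ell,b]$ to conclude that the forward flow of $z$ must reach level $\ell$ at some finite $T > 0$; then $\phi(z,T) \in W_y^- \cap A$. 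Either way $W_y^- \cap f^{-1}(\ell) \neq \emptyset$, contradicting our hypothesis.

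The main subtlety is ensuring that $W_y^-$ for $y \in C$ cannot escape to $-\infty$ in $f$-value without ever crossing the level $\ell$; this is where Conditions \eqref{cond:isolated-crit-values} and \eqref{cond:flow-alternative} do the essential work, replacing the sort of completeness hypothesis one would ordinarily get from Condition C. With this equivalence in hand, $S = f^{-1}(c) \setminus \varphi_{\ell,c}(A)$ is open in $f^{-1}(c)$, and the corollary follows.
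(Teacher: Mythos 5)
Your proposal is correct and follows essentially the same route as the paper: identify $\{x\in C : W_x^-=\{x\}\}$ with $f^{-1}(c)\setminus\varphi_{\ell,c}(f^{-1}(\ell))$ and apply the closedness of $\varphi_{\ell,c}$ from Lemma \ref{lem:level-set-map-closed}, with your write-up merely supplying more detail on the set identity. One small technical fix: in showing that a non-critical $y\in f^{-1}(c)\setminus C$ flows down to level $\ell$, you cannot apply Condition \eqref{cond:flow-alternative} on an interval $[\ell,b]$ with $b<c$, since that condition only governs points of $f^{-1}((\ell,b))$ and $f(y)=c>b$; instead choose a non-critical value $b'>c$ (using Condition \eqref{cond:isolated-crit-values}) and apply Condition \eqref{cond:flow-alternative} on $[\ell,b']$, noting that convergence to a critical point in $f^{-1}((\ell,b'))$ is impossible because $f$ strictly decreases from $c$ along the flow and there are no critical values in $[\ell,c)$.
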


\begin{proof}
The level set $f^{-1}(\ell)$ is closed in $Z$. Then Lemma \ref{lem:level-set-map-closed} shows that $\varphi_{\ell,c}(f^{-1}(\ell))$ is closed in $f^{-1}(c)$. Since Condition \eqref{cond:flow-alternative} implies that any $x \in f^{-1}(c) \setminus C$ flows to $f^{-1}(\ell)$ then
\begin{equation*}
\{ x \in C \, : \, W_x^- = \{x\} \} = f^{-1}(c) \setminus \varphi_{\ell,c}(f^{-1}(\ell))
\end{equation*}
and so $\{ x \in C \, : \, W_x^- = \{x\} \}$ is open in $f^{-1}(c)$.
\end{proof}

Lemma \ref{lem:restrict-to-closed-subset} shows that Conditions \eqref{cond:isolated-crit-values}--\eqref{cond:hyperbolicity} are preserved on restricting to a closed subset preserved by the gradient flow. The following simple example shows that Condition \eqref{cond:hyperbolicity} fails if we do not assume that the subset is closed. In this example the main theorem of Morse theory also fails.

\begin{example}\label{ex:hyperbolicity-fails1}
On $\mathbb{R}_{\geq 0} \times [0, 2 \pi)$ define the equivalence relation $(0, \theta_1) \sim (0, \theta_2)$ and consider the quotient space $Z = \R_{\geq 0} \times [0, 2\pi) / \sim$. Note that the map $\varphi : Z \rightarrow \R^2$ given by $\varphi(\rho, \theta) = (\rho \cos \theta, \rho \sin \theta)$ is a bijection, but not a homeomorphism. 

Define the function $f : Z \rightarrow \R$ by $f(\rho, \theta) = \frac{1}{2} \left( \rho^2 \sin^2 \theta - \rho^2 \cos^2 \theta \right)$. Note that $f \circ \varphi^{-1}(x, y) = \frac{1}{2} ( y^2 - x^2 )$. Although $Z$ is not a manifold, we can still define a flow along which $f$ is strictly decreasing by $\phi(\rho_0, \theta_0, t) = \varphi^{-1}(e^{t} \rho_0 \cos \theta_0, e^{-t} \rho_0 \sin \theta_0)$. Note that the subset $\varphi^{-1} \{ (x, y) \in \R^2 \, : \, x > 0, y < 0 \}$ is preserved by this flow.

The flow has a single critical point $\{ \rho = 0 \}$. The unstable set is $W_0^- = \{ \theta = 0 \} \cup \{ \theta = \pi \}$. Given $\varepsilon > 0$, we have $f^{-1}(-\varepsilon) \cap W_0^- = \{ (\sqrt{\varepsilon}, 0) , (\sqrt{\varepsilon}, \pi) \}$.  Then any neighbourhood of $f^{-1}(-\varepsilon) \cap W_0^-$ in $f^{-1}(-\varepsilon)$ will not intersect the subset $\varphi^{-1} \{ (x, y) \in \R^2 \, : \, x > 0, y < 0 \} = \{ (\rho, \theta) \in Z \, : \, \rho > 0, \theta > \frac{3\pi}{2} \}$, however any neighbourhood of $\{ \rho = 0 \}$ will intersect this set, which is preserved by the flow. Therefore Condition \eqref{cond:hyperbolicity} must fail for this example.

One can also see that the main theorem of Morse theory must also fail for this example, since $f^{-1}((-\infty, -\varepsilon]) \cup W_0^-$ is disconnected, however $f^{-1}((-\infty, \varepsilon])$ is connected.
\end{example}

This example illustrates that Condition \eqref{cond:hyperbolicity} is a nontrivial assumption and that if we don't assume this condition then we need a replacement in order to ensure that the main theorem of Morse theory holds. 

\section{A compactness theorem for spaces of flow lines}\label{sec:compactness-flow-lines}

In this section we show that Conditions \eqref{cond:isolated-crit-values}--\eqref{cond:hyperbolicity} together with an extra compactness condition on the unstable sets imply that spaces of flow lines connecting two critical points can be compactified by spaces of broken flow lines. The same is true for spaces of flow lines connecting two critical sets if we also assume that the critical sets are compact. This type of theorem has appeared for Morse functions and Morse-Bott functions on smooth manifolds (see for example \cite{AustinBraam95}), however the techniques used in these papers rely on the manifold structure of the ambient space and the Morse-Bott assumption to explicitly describe the trajectories near the critical points. Here we give a completely different proof which is intrinsic to the singular space and which replaces the Morse-Bott assumption with the more general Condition \eqref{cond:hyperbolicity}. 

A related question is to construct a collar neighbourhood of the boundary of the space of flow lines and describe this explicitly in terms of spaces of broken flow lines. For Morse-Bott functions on smooth manifolds, this has been done in \cite{AustinBraam95}, and again the methods use the smooth structure of the ambient space, most notably in the assumption that the stable and unstable manifolds intersect transversally. In the paper \cite{wilkin-quiver-broken-flow-lines} we will combine the methods of this section with the results of \cite{wilkin-quiver-flow-hecke} to give an algebro-geometric description of this collar neighbourhood for the space of representations of a quiver with relations, again using a method intrinsic to the singular space. 

We assume throughout this section that Conditions \eqref{cond:isolated-crit-values}--\eqref{cond:hyperbolicity} hold. Since we always assume that $Z$ is a closed subset of a Riemannian manifold $M$, then for any $x \in Z$ there exists a neighbourhood $U$ such that for any $y_1, y_2 \in U$ the distance along the shortest geodesic from $y_1$ to $y_2$ is well-defined. In the remainder of this section we denote this distance by $\| y_1 - y_2 \|$, and we also use $\| \cdot \|$ to denote the length of a tangent vector in $T_y M$ for any $y \in Z$.

First we show that if the unstable set $W_x^-$ is locally compact, then Conditions \eqref{cond:isolated-crit-values}--\eqref{cond:analytic} imply that the intersection of the unstable set with a level set of $f$  is compact if the level set is close enough to the level set containing the critical point $x$. The local compactness condition is satisfied whenever $Z$ is a subset of a finite-dimensional manifold, and it is also satisfied for examples in gauge theory such as the Yang-Mills-Higgs functional on a compact Riemann surface, where the unstable set is a subset of a finite-dimensional manifold.

\begin{lemma}\label{lem:unstable-compact}
Suppose that $Z$ is locally compact and that $f : Z \rightarrow \mathbb{R}$ satisfies Conditions \eqref{cond:isolated-crit-values}--\eqref{cond:analytic}. Let $x$ be a critical point of $f$. Then for every $\varepsilon > 0$ such that there are no critical values in $[f(x)-\varepsilon, f(x))$, the set $W_x^- \cap f^{-1}(f(x) - \varepsilon)$ is compact.
\end{lemma}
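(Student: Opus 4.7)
The plan is to use the Lojasiewicz gradient inequality near $x$ to trap the backward gradient flow of points in $W_x^-$ inside a small compact neighborhood of $x$, and then extract compactness of the level set intersection via Arzelà--Ascoli applied to the arc-length-parametrized flow lines.

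First, since $f$ is analytic by Condition \eqref{cond:analytic}, the Lojasiewicz gradient inequality at $x$ supplies a neighborhood $U$ of $x$ in $M$, constants $C > 0$ and $\theta \in (0, \tfrac{1}{2}]$ such that $\|\grad f(z)\| \geq C|f(z) - f(x)|^{1-\theta}$ for all $z \in U$. Using local compactness of $Z$ I shrink $U$ so that $\overline U \cap Z$ is compact, fix $\delta > 0$ with $\overline{B(x, 2\delta)} \subset U$, and by Condition \eqref{cond:isolated-crit-values} pick $\varepsilon_0 > 0$ satisfying $\tfrac{\varepsilon_0^\theta}{C\theta} < \delta$ and with no critical values in $[f(x) - \varepsilon_0, f(x))$. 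The key containment is that for every $\varepsilon' \in (0, \varepsilon_0]$ and every $y \in W_x^- \cap f^{-1}(f(x)-\varepsilon')$ the entire backward orbit $\{\phi(y, -t) : t \geq 0\}$ lies in $\overline{B(x, \delta)}$: if some such $y$ were outside $\overline{B(x, \delta)}$, then since $\phi(y, -t) \to x$ there would be a last exit time $t^* > 0$ with $w := \phi(y, -t^*) \in \partial B(x, \delta)$ and $\phi(y, -t) \in \overline{B(x, \delta)} \subset U$ for $t \geq t^*$, and the Lojasiewicz length estimate on $[t^*, \infty)$ would then give
\begin{equation*}
\int_{t^*}^\infty \|\grad f(\phi(y, -t))\| \, dt \;\leq\; \frac{(f(x) - f(w))^\theta}{C\theta} \;\leq\; \frac{(\varepsilon')^\theta}{C\theta} \;<\; \delta,
\end{equation*}
producing after arc-length reparametrization a continuous path from $w$ to $x$ of length strictly less than $\delta = \|w - x\|$, a contradiction. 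Applying the same argument to $\phi(y, -t)$ for all $t \geq 0$ shows the whole backward orbit is trapped in $\overline{B(x, \delta)}$, with total length at most $L := \tfrac{(\varepsilon')^\theta}{C\theta}$.

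To deduce that $W_x^- \cap f^{-1}(f(x) - \varepsilon')$ is compact, I reparametrize each backward orbit by arc length to obtain $1$-Lipschitz maps $\sigma_y : [0, L] \to \overline{B(x, \delta)}$ with $\sigma_y(0) = y$ and $\sigma_y(L_y) = x$, extended constantly by $x$ on $[L_y, L]$. For any sequence $y_n$, passing to a subsequence yields $y_n \to y_*$ in the compact set $\overline{B(x, \delta)}$ and $\sigma_{y_n} \to \sigma_*$ uniformly by Arzelà--Ascoli. The limit $\sigma_*$ is $1$-Lipschitz, starts at $y_*$, and satisfies $\sigma_*(L) = \lim_n \sigma_{y_n}(L) = x$. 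Continuous dependence and uniqueness for the gradient flow ODE then identify $\sigma_*$, away from critical points, with the arc-length reparametrization of the backward flow from $y_*$; uniqueness of ODE solutions at the stationary point $x$ rules out the flow reaching $x$ in finite time, forcing the time parameter to tend to $+\infty$ as the arc length approaches $L_*$. Hence $\phi(y_*, -t) \to x$, so $y_* \in W_x^-$. Identifying the limit $\sigma_*$ unambiguously with the backward flow from $y_*$ converging to $x$ itself, as opposed to some other critical point at level $f(x)$, is the main subtlety of the proof, and it is precisely the uniform length bound together with the endpoint $\sigma_*(L) = x$ that makes this identification possible.

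Finally, for an arbitrary $\varepsilon > 0$ with no critical values in $[f(x) - \varepsilon, f(x))$, compactness already holds if $\varepsilon \leq \varepsilon_0$. Otherwise, Condition \eqref{cond:flow-alternative} together with Lemma \ref{lem:tau-continuous} produces a continuous forward-flow map from the regular level set $f^{-1}(f(x) - \varepsilon_0)$ to $f^{-1}(f(x) - \varepsilon)$, which carries $W_x^-$ into itself surjectively onto $W_x^- \cap f^{-1}(f(x) - \varepsilon)$; the latter is thus the continuous image of the compact set $W_x^- \cap f^{-1}(f(x) - \varepsilon_0)$ and hence compact.
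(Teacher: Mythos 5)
Your boundedness argument is essentially the paper's: the Lojasiewicz length estimate traps the backward orbit of any $y \in W_x^- \cap f^{-1}(f(x)-\varepsilon')$ inside $\overline{B(x,\delta)}$, and the reduction from the small $\varepsilon'$ to a general $\varepsilon$ via the finite-time flow between regular levels is also the same. The divergence, and the gap, lies in how you pass from boundedness to compactness. The paper observes that $W_x^- \cap f^{-1}(f(x)-\varepsilon)$ is the preimage of the single point $x$ under the continuous map $f^{-1}(f(x)-\varepsilon) \rightarrow f^{-1}(f(x))$ furnished by Proposition \ref{prop:retract-to-critical-level} (the Lojasiewicz--Simon continuity of the flow-to-the-limit map), hence closed; closed, bounded and locally compact then gives compact. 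You instead try to recover closedness by Arzel\`a--Ascoli on arc-length-parametrized backward flow lines, and that step does not go through as written.

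The problem is the identification of the limit curve $\sigma_*$. Continuous dependence on initial conditions identifies $\sigma_*$ with the arc-length reparametrization of the backward flow from $y_*$ only on $[0,L_*)$, where $L_* \leq L$ is the total length of that backward orbit; at $s = L_*$ the orbit converges to \emph{some} critical point $z = \sigma_*(L_*)$ at level $f(x)$ (there are no critical values in $[f(x)-\varepsilon', f(x))$, and $f \leq f(x)$ along every $\sigma_{y_n}$, hence along $\sigma_*$). Your endpoint condition only gives $\sigma_*(L) = x$. If $L_* < L$, the remaining segment is not a flow line: passing the identity $f(\sigma_{y_n}(L)) - f(\sigma_{y_n}(s)) = \int_s^L \| \grad f(\sigma_{y_n}(u)) \| \, du$ to the limit shows that $\grad f$ vanishes identically on $\sigma_*([L_*,L])$, so this segment is a path of length at most $L - L_*$ lying \emph{inside the critical set} and joining $z$ to $x$. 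Neither the uniform length bound nor $\sigma_*(L) = x$ forces $z = x$ when the critical set is positive-dimensional --- which is the typical situation for $\| \mu \|^2$, whose critical sets are unions of $K$-orbits. So your argument only yields $y_* \in W_C^-$ for the critical set $C$ containing $x$, not $y_* \in W_x^-$, i.e.\ it does not establish that the unstable set of the individual point $x$ is closed. Closing this gap requires exactly the continuity of the backward-flow limit map from Proposition \ref{prop:retract-to-critical-level}, which is the route the paper takes.
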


\begin{proof}
Since the flow defines a continuous map $f^{-1}(f(x) - \varepsilon) \rightarrow f^{-1}(f(x))$ by Proposition \ref{prop:retract-to-critical-level}, and $W_x^- \cap f^{-1}(f(x) - \varepsilon)$ is the preimage of $x$ under this map, then $W_x^- \cap f^{-1}(f(x) - \varepsilon)$ is closed. Therefore, since $Z$ is locally compact, then it is sufficient to show that $W_x^- \cap f^{-1}(f(x) - \varepsilon)$ is contained in a bounded neighbourhood of $x$.

Since $Z$ is a subset of a manifold $M$ on which the gradient flow of $f$ is well-defined, and $f$ is analytic by Condition \eqref{cond:analytic}, then we can apply the Lojasiewicz inequality of \cite{Simon83} to show that there exists $\delta > 0$ and constants $C > 0$, $\theta \in (0, \frac{1}{2})$ such that for any $y$ satisfying $\| x - y \| < \delta$, we have
\begin{equation*}
\| \grad f(y) \| \geq C |f(x) - f(y)|^{1 - \theta} .
\end{equation*}
A standard calculation (cf. \cite{Simon83}) shows that for any flow line $y_t$ we have
\begin{equation*}
C \theta \| \grad f(y_t) \| \leq \frac{\partial}{\partial t} | f(x) - f(y_t) |^\theta
\end{equation*}
if $\| x - y_t \| < \delta$. Given any $y_0 \in W_x^-$ there exists $\tau < 0$ such that $\| x - y_t \| < \delta$ for all $t \leq \tau$. Integrating the above inequality on the interval $(-\infty, \tau]$ gives us
\begin{equation}\label{eqn:flow-line-bound}
\int_{-\infty}^\tau \| \grad f(y_t) \| dt \leq \frac{1}{C \theta} |f(x) - f(y_\tau)|^\theta \leq \frac{1}{C \theta} |f(x) - f(y_0)|^\theta .
\end{equation}
Now choose $\varepsilon' > 0$ such that $\frac{1}{C \theta} (\varepsilon')^\theta < \frac{1}{2} \delta$. Given $y_0 \in W_x^- \cap f^{-1}(f(x) - \varepsilon')$, suppose (for contradiction) that $\| x - y_t\| \geq \delta$ for some $t < 0$. Let $\tau = \inf \{ t \in \mathbb{R}_{< 0} \, : \, \| x - y_t \| \geq \delta \}$. Since the flow is continuous then $\| x - y_\tau \| = \delta$. Then the above estimate applies on the interval $(-\infty, \tau]$ and so we obtain a contradiction
\begin{equation*}
\delta = \| x - y_\tau \| \leq \int_{-\infty}^\tau \| \grad f(y_t) \| dt \leq \frac{1}{C \theta} |f(x) - f(y_0)|^\theta < \frac{1}{2} \delta  .
\end{equation*}
Therefore the set $\{ t \in \mathbb{R}_{< 0} \, : \, \| x - y_t \| \geq \delta \}$ must be empty, which implies that $\| x - y_0 \| < \delta$. Therefore we have shown that $W_x^- \cap f^{-1}(f(x) - \varepsilon')$ is closed and contained in a ball of radius $\delta$ around $x$, which implies that it is compact since $Z$ is locally compact.

Given any $\varepsilon > 0$ such that there are no critical values in $[f(x)-\varepsilon, f(x))$, the flow defines a homeomorphism $W_x^- \cap f^{-1}(f(x) - \varepsilon) \rightarrow W_x^- \cap f^{-1}(f(x) - \varepsilon')$, and so $W_x^- \cap f^{-1}(f(x) - \varepsilon)$ is also compact.
\end{proof}

Given two critical points $x_u$ and $x_\ell$ with $f(x_u) > f(x_\ell)$, define $\tilde{\mathcal{F}}(x_u, x_\ell) := W_{x_u}^- \cap W_{x_\ell}^+$ as the space of all points in $Z$ that flow up to $x_u$ and down to $x_\ell$. The flow defines a natural $\mathbb{R}$-action on $\tilde{\mathcal{F}}(x_u, x_\ell)$.

\begin{definition}\label{def:flow-line-space}
The \emph{space of flow lines connecting $x_u$ and $x_\ell$} is 
\begin{equation*}
\mathcal{F}(x_u, x_\ell) := \tilde{\mathcal{F}}(x_u, x_\ell) / \mathbb{R} .
\end{equation*}
\end{definition}
Given any $z \in (f(x_\ell), f(x_u))$, the space of flow lines $\mathcal{F}(x_u, x_\ell)$ is homeomorphic to $\tilde{\mathcal{F}}(x_u, x_\ell) \cap f^{-1}(z)$. Each flow line $\gamma \in \mathcal{F}(x_u, x_\ell)$ defines a map $\gamma : \R \rightarrow W_{x_u}^- \cap W_{x_\ell}^+$. Given two critical points $x_u, x_\ell$ we fix $z \in (f(x_\ell), f(x_u))$ and adopt the convention that $f(\gamma(t=0)) = z$ for all $\gamma \in \mathcal{F}(x_u, x_\ell)$. 
\begin{definition}\label{def:convergent-flow-lines}
A sequence $\{ \gamma_n \} \subset \mathcal{F}(x_u, x_\ell)$ converges to a limit $\gamma_\infty \in \mathcal{F}(x_u, x_\ell)$ if and only if the sequence $\{ \gamma_n(0) \}$ converges to a limit $\gamma_\infty(0) \in \tilde{\mathcal{F}}(x_u, x_\ell) \cap f^{-1}(z)$.
\end{definition}

\begin{remark}
This is stronger than the condition that $\{ \gamma_n(0) \}$ converges to a limit in $f^{-1}(z)$, since we also require that the limit is in $\tilde{\mathcal{F}}(x_u, x_\ell)$. Theorem \ref{thm:compactification} below shows that if $\{ \gamma_n(0) \}$ converges to a limit in $f^{-1}(z)$ then we can find a subsequence $\{ \gamma_{n_j} \}$ and interpret the limiting trajectory as a broken flow line.

Since the finite-time flow depends continuously on the initial condition, then $\{ \gamma_n \}$ converges to $\gamma_\infty \in \mathcal{F}(x_u, x_\ell)$ if and only if $\{ \gamma_n(t) \}$ converges to $\gamma_\infty(t) \in \tilde{\mathcal{F}}(x_u, x_\ell)$ for every $t \in \mathbb{R}$.
\end{remark}

In preparation for the main result of the section, we prove that the critical value at the lower endpoint of a flow line depends lower semi-continuously on the flow line.

\begin{lemma}\label{lem:local-closedness}
Suppose that $f : Z \rightarrow \mathbb{R}$ satisfies Conditions \eqref{cond:isolated-crit-values}--\eqref{cond:hyperbolicity}. Let $x_u, x_\ell$ be critical points with $f(x_u) > f(x_\ell)$ and let $\{ y_n \}$ be a sequence of points in $\tilde{\mathcal{F}}(x_u, x_\ell)$ which converges to $y_\infty \in Z$. Then $\lim_{t \rightarrow \infty} f(\phi(y_\infty,t)) \geq f(x_\ell)$ with equality if and only if $\lim_{t \rightarrow \infty} \phi(y_\infty, t) = x_\ell$.  
\end{lemma}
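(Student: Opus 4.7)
Since $f\circ\phi(y_\infty,\cdot)$ is non-increasing, the limit $L := \lim_{t\to\infty}f(\phi(y_\infty,t))$ exists in $[-\infty,f(y_\infty)]$, so the plan is to (i) show $L\ge f(x_\ell)$ by a contradiction argument, (ii) show that equality forces $\lim_{t\to\infty}\phi(y_\infty,t)=x_\ell$ by splitting into two sub-cases according to the value of $f(y_\infty)$, and (iii) note that the converse is immediate by continuity of $f$.

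For (i), I would suppose $L<f(x_\ell)$ and use Condition \eqref{cond:isolated-crit-values} to pick a non-critical value $a\in(L,f(x_\ell))$ together with some $T>0$ satisfying $f(\phi(y_\infty,T))<a$. Continuous dependence of the flow on initial data over the finite time $T$ then yields $f(\phi(y_n,T))<a$ for all sufficiently large $n$; but $y_n\in W_{x_\ell}^+$ and monotonicity of $f$ along $\phi(y_n,\cdot)$ force $f(\phi(y_n,T))\ge f(x_\ell)>a$, a contradiction.

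For (ii), assume $L=f(x_\ell)$. Condition \eqref{cond:flow-alternative} applied on an interval $(L-\eta,L+\eta)$ with non-critical endpoints, together with monotonicity (which rules out the forward flow of $y_\infty$ ever reaching $L-\eta$), shows that $\phi(y_\infty,t)$ converges as $t\to\infty$ to some $x_\infty\in C:=\Crit_Z(f)\cap f^{-1}(L)$. In the non-degenerate sub-case $f(y_\infty)>L$, I would choose a non-critical level $\ell\in(L,f(y_\infty))$ with no critical values in $(L,\ell]$ and set $z_n:=\phi(y_n,\tau_\ell(y_n))$, $z_\infty:=\phi(y_\infty,\tau_\ell(y_\infty))$; continuity of $\tau_\ell$ (Lemma \ref{lem:tau-continuous}) and of the finite-time flow give $z_n\to z_\infty$, and the continuity of the level-set map $\varphi_{\ell,L}$ proved in Proposition \ref{prop:retract-to-critical-level} then yields $x_\ell=\varphi_{\ell,L}(z_n)\to\varphi_{\ell,L}(z_\infty)=x_\infty$, hence $x_\infty=x_\ell$.

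The main obstacle is the degenerate sub-case $f(y_\infty)=L$, where $y_\infty$ never reaches a level above $L$ and $\varphi_{\ell,L}$ is not directly available. I would first observe that $y_\infty\in C$ (otherwise strict decrease of $f$ along $\phi(y_\infty,\cdot)$ for small positive times, combined with continuity at the $y_n$, would give $f(\phi(y_n,\delta))<L$ for some small $\delta>0$ and large $n$, contradicting $f\ge L$ along $\phi(y_n,\cdot)$). To force $y_\infty=x_\ell$, I would pick an open neighbourhood $V'\subset f^{-1}(L)$ of $y_\infty$ disjoint from $x_\ell$ and invoke the Lojasiewicz gradient-inequality argument of Simon (available via the analyticity Condition \eqref{cond:analytic}, as already used in the proof of Proposition \ref{prop:retract-to-critical-level}) to produce a neighbourhood $U'\subset f^{-1}([L,\infty))$ of $y_\infty$ such that each $y\in U'$ either flows under $\phi$ to a critical point in $V'\cap C$ or reaches $V'$ at some finite time. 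For $n$ large $y_n\in U'$; the first alternative contradicts $\phi(y_n,t)\to x_\ell\notin V'$, while in the second the landing point has $f$-value exactly $L$ and must itself be critical (otherwise $f$ would drop strictly below $L$, contradicting $f(\phi(y_n,t))\to L$), forcing the flow limit to lie in $V'\cap C\setminus\{x_\ell\}$, again a contradiction. This completes (ii), and (iii) is immediate.
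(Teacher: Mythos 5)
Your proof is correct, and its core mechanism in the equality case is the same as the paper's: both ultimately rest on the continuity of the level-set map $\varphi_{\ell,c}$ established in Proposition \ref{prop:retract-to-critical-level} (the paper phrases this as closedness of the fibre $W_{x_\ell}^+ \cap f^{-1}(f(x_\ell)+\varepsilon) = \varphi_{\ell,c}^{-1}(\{x_\ell\})$ and pushes the sequence $\phi(y_n,t_n)$ into that level set; you phrase it as continuity of $\varphi_{\ell,L}$ applied to $z_n \to z_\infty$). The differences are worth noting. For the inequality $L \ge f(x_\ell)$ you give a more elementary argument, using only monotonicity of $f$ along the trajectories of the $y_n$ and finite-time continuous dependence, with no appeal to Proposition \ref{prop:retract-to-critical-level}; this is cleaner than the paper's route. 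More substantively, you isolate and treat the degenerate sub-case $f(y_\infty)=f(x_\ell)$, where $y_\infty$ is itself critical and no forward crossing time to a level above $f(x_\ell)$ exists; the paper dismisses the equality case with ``the same proof as above,'' which tacitly assumes a time $t_\infty$ with $f(\phi(y_\infty,t_\infty))=f(x_\ell)+\varepsilon$ and so does not literally cover this sub-case. Your Lojasiewicz-neighbourhood argument (borrowed from the proof of Proposition \ref{prop:retract-to-critical-level}) closes that gap correctly. One small presentational caveat: when you invoke Lemma \ref{lem:tau-continuous} for the continuity of $\tau_\ell$ at $y_\infty$, note that the lemma is stated for intervals containing at most one critical value, whereas $[\,\ell, f(y_\infty)]$ may contain several; this is harmless (either first flow everything down to a level in $(L,L+\eta)$ for a fixed finite time, or observe directly that $\tau_\ell(y_n)\to\tau_\ell(y_\infty)$ because $f\circ\phi(y_\infty,\cdot)$ crosses the non-critical level $\ell$ transversally), but it deserves a sentence.
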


\begin{proof}
Suppose that $\lim_{t \rightarrow \infty} f(\phi(y_\infty,t)) = z < f(x_\ell)$. By Condition \eqref{cond:isolated-crit-values} we can choose $\varepsilon > 0$ such that there are no critical values in $(f(x_\ell), f(x_\ell)+\varepsilon]$. For each $n$ there exists $t_n$ such that $f(\phi(y_n, t_n)) = f(x_\ell) + \varepsilon$. Since $\lim_{t \rightarrow \infty} f(\phi(y_\infty,t)) = z < f(x_\ell)$ by assumption, then there also exists $t_\infty$ such that $f(\phi(y_\infty,t_\infty)) = f(x_\ell) + \varepsilon$. Since $\{ y_n \}$ converges to $y_\infty$ and the finite-time flow depends continuously on the initial condition, then $\phi(y_n, t_n)$ converges to $\phi(y_\infty, t_\infty)$. Since $y_n \in \tilde{\mathcal{F}}(x_u, x_\ell)$ for all $n$ then $\phi(y_n, t_n) \in W_{x_\ell}^+ \cap f^{-1}(f(x_\ell) + \varepsilon)$ for all $n$. 

Proposition \ref{prop:retract-to-critical-level} shows that the flow defines a continuous deformation retract from $f^{-1}(f(x_\ell) + \varepsilon)$ to $f^{-1}(f(x_\ell))$, therefore $W_x^+ \cap f^{-1}(f(x_\ell) + \varepsilon)$ is closed in $f^{-1}(f(x_\ell) + \varepsilon)$, and therefore $\phi(y_n, t_n) \rightarrow \phi(y_\infty, t_\infty)$ implies that $\phi(y_\infty, t_\infty) \in W_{x_\ell}^+$, contradicting the assumption that $\lim_{t \rightarrow \infty} f(\phi(y_\infty,t)) = z < f(x_\ell)$. 

If we assume that $\lim_{t \rightarrow \infty} f(\phi(y_\infty,t)) = f(x_\ell)$ then the same proof as above shows that $y_\infty \in W_{x_\ell}^+$. 
\end{proof}

The next theorem is the main result of this section, which shows that $\mathcal{F}(x_u, x_\ell)$ has a natural compactification by spaces of broken flow lines.

\begin{theorem}\label{thm:compactification}
Suppose that Conditions \eqref{cond:isolated-crit-values}--\eqref{cond:hyperbolicity} hold and that for any critical point $x$ the unstable set $W_x^-$ is locally compact. Let $x_\ell, x_u$ be two critical points with $f(x_u) > f(x_\ell)$, and let $\{ \gamma_n \}$ be a sequence of flow lines in $\mathcal{F}(x_u, x_\ell)$. Then there exists a subsequence $\{ \gamma_{n_j} \}$, a finite set of critical points $\{ x_0 = x_u, x_1, \ldots, x_m, x_{m+1} = x_\ell \}$ with $f(x_u) > f(x_1) > \cdots > f(x_m) > f(x_\ell)$ and a finite subset $\{ r_0, \ldots, r_m \}$ such that each $r_k \in (f(x_{k+1}), f(x_k))$ and the following property holds. For each $k = 0, \ldots, m$, define $t_{n_j}^{(k)}$ to be the unique time such that $\gamma_{n_j}(t_{n_j}^{(k)}) = r_k$. Then for each $k = 0, \ldots, m$ the sequence $\gamma_{n_j}(t_{n_j}^{(k)})$ converges to a point $y_k \in \tilde{\mathcal{F}}(x_k, x_{k+1})$. 
\end{theorem}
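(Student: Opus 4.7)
The plan is to iterate over the finitely many critical values of $f$ in $[f(x_\ell), f(x_u)]$, which Condition~\eqref{cond:isolated-crit-values} forces to be a discrete set. Enumerating them as $c_0 = f(x_u) > c_1 > \cdots > c_N = f(x_\ell)$, I would pick a regular value $\rho_i \in (c_{i+1}, c_i)$ for each $i \in \{0, \ldots, N-1\}$, and let $s_n^{(i)}$ denote the unique time with $f(\gamma_n(s_n^{(i)})) = \rho_i$. The aim is to produce a single subsequence $\{\gamma_{n_j}\}$ along which $\gamma_{n_j}(s_{n_j}^{(i)}) \to y_i^* \in f^{-1}(\rho_i)$ for every $i$; the broken trajectory of the theorem is then obtained by recording precisely those indices at which the limiting flow is absorbed into a critical point.

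The base case $i = 0$ follows from Lemma~\ref{lem:unstable-compact}: $\gamma_n(s_n^{(0)}) \in W_{x_u}^- \cap f^{-1}(\rho_0)$, and the latter is compact. For the inductive step, I would apply Condition~\eqref{cond:flow-alternative} to the current limit $y_i^*$ and split into two cases. Case (A): the forward flow of $y_i^*$ reaches $\rho_{i+1}$ in finite time. Then $y_i^*$ lies in the open set $f^{-1}(\rho_i) \setminus W_{C_{i+1}}^+$ on which the level-set map $\varphi_{\rho_i, \rho_{i+1}}$ is continuous by Lemma~\ref{lem:tau-continuous}, so $\gamma_n(s_n^{(i+1)}) \to \varphi_{\rho_i, \rho_{i+1}}(y_i^*) =: y_{i+1}^*$ with no additional extraction. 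Case (B): the flow of $y_i^*$ converges to some $x^* \in C_{i+1}$; Proposition~\ref{prop:retract-to-critical-level} then guarantees that the crossing points $p_n := \varphi_{\rho_i, c_{i+1}}(\gamma_n(s_n^{(i)}))$ at level $c_{i+1}$ converge to $x^*$, setting up a more delicate extraction at level $\rho_{i+1}$.

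The main obstacle will be this last extraction. My plan is to combine Condition~\eqref{cond:hyperbolicity} with the compactness of $K := W_{x^*}^- \cap f^{-1}(\rho_{i+1})$ from Lemma~\ref{lem:unstable-compact}. For every open neighbourhood $U \subset f^{-1}(\rho_{i+1})$ of $K$, hyperbolicity yields a neighbourhood $V$ of $x^*$ such that any point in $V \setminus W_{C_{i+1}}^+$ flows forward into $U$; since $f$ is strictly decreasing along non-stationary flow lines, this entry must coincide with the unique crossing of $f^{-1}(\rho_{i+1})$, which is exactly $\gamma_n(s_n^{(i+1)})$. Because $p_n \in V$ for large $n$ and $\gamma_n$ passes through $c_{i+1}$ without stopping (so $p_n \notin W_{C_{i+1}}^+$), the sequence $\gamma_n(s_n^{(i+1)})$ lies eventually in every such $U$, and local compactness of $f^{-1}(\rho_{i+1})$ near $K$ permits extraction of a convergent subsequence with limit $y_{i+1}^* \in K$ via a nested-neighbourhood diagonal argument.

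To finish, at the terminal index $i = N-1$ Lemma~\ref{lem:local-closedness} forces $y_{N-1}^*$ to flow to $x_\ell$, since the lower bound $\liminf_{t \to \infty} f(\phi(y_{N-1}^*, t)) \geq f(x_\ell)$ combined with the absence of critical values in $(f(x_\ell), \rho_{N-1})$ leaves no alternative. Let $J = \{j_1 < \cdots < j_m\} \subseteq \{0, \ldots, N-2\}$ record the indices at which Case (B) occurred. Setting $x_0 = x_u$, $x_{m+1} = x_\ell$, $x_k$ equal to the critical point identified in Case (B) at $j_k$, $r_0 = \rho_0$, and $r_k = \rho_{j_k+1}$ for $k \geq 1$, the chain of intervening Case (A) transitions confirms that the theorem's $y_0 := y_0^*$ and $y_k := y_{j_k+1}^*$ lie in $W_{x_k}^- \cap W_{x_{k+1}}^+ = \tilde{\mathcal{F}}(x_k, x_{k+1})$, while $r_k \in (f(x_{k+1}), f(x_k))$ by construction.
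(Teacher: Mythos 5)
Your proposal is correct and follows essentially the same strategy as the paper's proof: the base case via the compactness of $W_{x_u}^-\cap f^{-1}(\rho_0)$ from Lemma \ref{lem:unstable-compact}, the identification of intermediate break points via Lemma \ref{lem:local-closedness} and Proposition \ref{prop:retract-to-critical-level}, and the key extraction step in which Condition \eqref{cond:hyperbolicity} funnels the sequence into arbitrary neighbourhoods of the compact unstable slice of the limiting critical point. The only difference is bookkeeping --- you step through every critical level in $[f(x_\ell),f(x_u)]$ with a Case (A)/(B) dichotomy, whereas the paper's induction jumps directly from one break to the next --- and your level-by-level propagation in Case (A) is, if anything, slightly more careful about crossing intermediate critical values at which the limit trajectory does not break.
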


\begin{proof}
Using Condition \eqref{cond:isolated-crit-values}, choose $\varepsilon > 0$ such that there are no critical values in the interval $[f(x_u) - \varepsilon, f(x_u))$. For each $n$, let $t_n^{(0)}$ be the unique time such that $f(\gamma_n(t_n^{(0)})) = f(x_u) - \varepsilon$. Since $W_{x_u}^- \cap f^{-1}(f(x_u) - \varepsilon)$ is compact by Lemma \ref{lem:unstable-compact}, then there exists a subsequence $\{ \gamma_{n_j} \}$ and $y_0 \in W_{x_u}^-$ such that $\gamma_{n_j}(t_{n_j}^{(0)})$ converges to $y_0$. If $y_0 \in \tilde{\mathcal{F}}(x_u, x_\ell)$ then we are done. If not, then Lemma \ref{lem:local-closedness} shows that there exists a critical point $x_1$ with $f(x_u) > f(x_1) > f(x_\ell)$ such that $y_0 \in \tilde{\mathcal{F}}(x_u, x_1)$. Let $C_1$ denote the set of critical points with critical value $f(x_1)$.

Now use Condition \eqref{cond:isolated-crit-values} again to choose $\varepsilon > 0$ such that $[f(x_1) - \varepsilon, f(x_1))$ contains no critical values, define $s_{n_j}^{(1)}$ and $t_{n_j}^{(1)}$ as the unique real numbers such that $f(\gamma_{n_j}(s_{n_j}^{(1)})) = f(x_1)$ and $f(\gamma_{n_j}(t_{n_j}^{(1)})) = f(x_1) - \varepsilon$. Note that $\gamma_{n_j}(s_{n_j}^{(1)})$ converges to $x_1$ by Proposition \ref{prop:retract-to-critical-level}. Given any neighbourhood $U_1$ of $W_{x_1}^- \cap f^{-1}(f(x_1) - \varepsilon)$, Condition \eqref{cond:hyperbolicity} guarantees the existence of a neighbourhood $V_1$ of $x_1$ such that $V_1 \setminus W_{C_1}^+$ flows into $U_1$. Since $\gamma_{n_j}(s_{n_j}^{(1)})$ converges to $x_1$ then there exists $N_1$ such that $n_j > N_1$ implies that $\gamma_{n_j}(s_{n_j}^{(1)}) \in V_1$, and so $\gamma_{n_j}(t_{n_j}^{(1)}) \in U_1$. 

This is true for any neighbourhood $U_1$ of $W_{x_1}^- \cap f^{-1}(f(x_1) - \varepsilon)$, and so since $W_{x_1}^- \cap f^{-1}(f(x_1) - \varepsilon)$ is compact by Lemma \ref{lem:unstable-compact}, then there exists a further subsequence $\gamma_{n_j'}$ such that $\gamma_{n_j'}(t_{n_j'}^{(1)})$ converges to a point $y_1 \in W_{x_1}^- \cap f^{-1}(f(x_1)-\varepsilon)$. If $y_1 \in \tilde{\mathcal{F}}(x_1, x_\ell)$ then we are done. If not, then Lemma \ref{lem:local-closedness} shows that $y_1 \in \tilde{\mathcal{F}}(x_1, x_2)$ for some critical point $x_2$ with $f(x_2) > f(x_\ell)$, and we can repeat the above process inductively to obtain a finite set of critical points satisfying the conditions of the theorem.
\end{proof}

\section{Condition \eqref{cond:def-retract} for moment maps on affine varieties}\label{sec:condition5-moment-maps}

In this section we complete the proof of Theorem \ref{thm:main-theorem-affine-variety} by showing that Condition \eqref{cond:def-retract} holds for the case when $f = \| \mu \|^2$ is the norm-square of a moment map on an affine $G$-variety. The goal is to reduce the problem of proving that Condition \eqref{cond:def-retract} is satisfied to a simpler criterion on the analyticity of the unstable set $W_C^-$. Proposition \ref{prop:condition5} shows that this criterion holds for the norm-square of a moment map on an affine variety.

Let $G$ be a connected, reductive algebraic group and let $Z$ be an affine algebraic $G$-variety. By \cite[Lemma 1.1]{Kempf78}, we can assume that $Z \subset V$ where $V$ is a finite-dimensional vector space and the action of $G$ on $V$ is linear. Using the same notation and setup as in Section \ref{sec:moment-map}, let $f = \| \mu \|^2 : V \rightarrow \R$ be the norm-square of the associated moment map.

Let $c$ be a critical value of $f$ and let $C$ be the associated critical set $\Crit_Z(f) \cap f^{-1}(c)$. The unstable set of $C$ is
\begin{equation*}
W_C^- := \left\{ y \in V \, : \, \lim_{t \rightarrow - \infty} \gamma(y, t) \in C \right\} .
\end{equation*}
The goal of this section is to show that $W_C^-$ has the structure of a real analytic set in a neighbourhood of $C$, and hence we can then apply Theorem 1.1 of \cite{pflaumwilkin} to $W_C^- \cap f^{-1}(c-\varepsilon)$ in order to prove that the deformation retract of Condition \eqref{cond:def-retract} exists.

First we prove some results about the function $f$ on the smooth space $V$, before restricting to the subset $Z$ in Proposition \ref{prop:condition5}. Using an $\Ad$-invariant inner product on $\mathfrak{k}$, we identify $\mathfrak{k} \cong \mathfrak{k}^*$ and also use $\mu(x) \in \mathfrak{k}$ to denote the element corresponding to $\mu(x) \in \mathfrak{k}^*$. The infinitesimal action of $u \in \mathfrak{g}$ at $x$ is denoted $\rho_x(u) = \left. \frac{d}{dt} \right|_{t=0} e^{tu} \cdot x \in T_x V$. Note that the tangent space to the $G$-orbit through $x$ is $T_x (G \cdot x) = \im \rho_x$. Since $V$ is an affine space then (considering $V$ as a vector space with zero element at $x$) we can identify $V \cong T_x V$, and so for each $x$ we can consider $\im \rho_x$ as a subspace of $V$.

\begin{definition}\label{def:neg-slice-bundle}
Let $c$ be a critical value of $f$ and let $C = \Crit(f) \cap f^{-1}(c)$. The \emph{negative slice bundle} is
\begin{equation}\label{eqn:neg-slice-bundle-def}
S_C^- = \{ (x, y) \in C \times V \, : \, \lim_{t \rightarrow \infty} e^{i \mu(x) t} \cdot y = 0 \, \,  \text{and} \, \, y \perp \im \rho_x \}
\end{equation}
which is equipped with a canonical projection map $S_C^- \rightarrow C$. 
\end{definition}

The proof of the following lemma is contained in \cite[Sec. 4]{Kirwan84}.
\begin{lemma}
At each critical point $x \in C$, the fibre $S_x^- = \{ y \in V \, : \, \lim_{t \rightarrow \infty} e^{i \mu(x) t} \cdot y = 0, \, \, y \perp \im \rho_x \}$ is isomorphic to the negative eigenspace of the Hessian of $f$. Moreover, $S_C^-$ is a vector bundle over $C$, with constant rank on each connected component of $C$.
\end{lemma}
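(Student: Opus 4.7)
The plan is to compute the Hessian of $f=\|\mu\|^2$ at the critical point $x$ explicitly and identify its negative eigenspace on the slice $(\im \rho_x)^\perp$ with the vector space $S_x^-$; the bundle statement will then follow from constancy of the relevant spectral data along $C$. The main obstacle is verifying that the family of eigenspaces varies continuously with $x \in C$, which requires ruling out eigenvalue crossings as one moves along a connected component of the critical set.

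First I would differentiate $f(y)=\sum_a \mu(y)(e_a)^2$ twice, using the formula $\mu(y)(e_a)=\tfrac12\omega(\rho_y(e_a),y)$ together with the moment-map identity $d[\mu(\cdot)(e_a)]_y(z)=\omega(\rho_y(e_a),z)$. A first differentiation gives $df_y(z)=2\omega(\rho_y(\mu(y)),z)$, and a second differentiation at $y=x$ (where $\rho_x(i\mu(x))=0$) yields
\[
d^2f_x(w,z)\;=\;2\sum_a \omega(\rho_x(e_a),w)\,\omega(\rho_x(e_a),z)\;+\;2\omega(A_{\mu(x)}w,z),
\]
where $A_\xi:V\to V$ denotes the linear operator arising from the infinitesimal $\mathfrak{g}$-action, so that $\rho_y(\xi)=A_\xi y$. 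For $w,z\in(\im\rho_x)^\perp$ the first sum vanishes, since $i\rho_x(\mathfrak{k})\subset \im\rho_x$ and the pairing $\omega(\rho_x(e_a),\cdot)$ equals, up to sign, the real inner product with $i\rho_x(e_a)$. Thus on the slice the Hessian reduces to $2\omega(A_{\mu(x)}\cdot,\cdot)$, which via the K\"ahler identity $\omega(u,v)=\langle u,iv\rangle_\R$ is a constant multiple of $\langle A_{i\mu(x)}\cdot,\cdot\rangle_\R$.

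Since $\mu(x)\in\mathfrak{k}$ acts on $V$ by skew-Hermitian operators, the operator $A_{i\mu(x)}=iA_{\mu(x)}$ is self-adjoint, so $V$ splits as $V=\bigoplus_\lambda V_\lambda$ into its real eigenspaces. The one-parameter subgroup $\{e^{ti\mu(x)}\}\subset G$ acts on $V_\lambda$ by multiplication by $e^{t\lambda}$, so the set of $y\in V$ for which $e^{ti\mu(x)}y$ converges to $0$ in the limit direction fixed by the paper's conventions is precisely the sum of those $V_\lambda$ with the appropriate sign of $\lambda$. By the Hessian computation, these are exactly the negative eigenspaces of the Hessian restricted to the slice, and intersecting with $(\im\rho_x)^\perp$ as built into the definition of $S_x^-$ establishes the first assertion of the lemma.

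For the vector-bundle statement I would appeal to Kirwan's description of the critical set of $\|\mu\|^2$ from \cite{Kirwan84}: on each connected component of $C$ the element $\mu(x)$ lies in a single $K$-adjoint orbit. The operators $A_{\mu(x)}$ for such $x$ are therefore pairwise conjugate via the unitary $K$-action on $V$, so their spectra and the multiplicities of their eigenvalues are constant along each component, eliminating any eigenvalue crossings. Combined with constancy of $\dim\im\rho_x$ on each component (it equals the dimension of the $G$-orbit through $x$, which is constant on $K$-orbits), this shows that $\dim S_x^-$ is locally constant, and the resulting continuous dependence of the spectral projections of $A_{i\mu(x)}$ on $x$ endows $S_C^-$ with the structure of a topological subbundle of the trivial bundle $C\times V$ of constant rank on each connected component.
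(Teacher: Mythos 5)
The paper offers no proof of this lemma beyond the citation to \cite[Sec.\ 4]{Kirwan84}, and your Hessian computation is essentially Kirwan's: the formula
$d^2f_x(w,z)=2\sum_a\omega(\rho_x(e_a),w)\,\omega(\rho_x(e_a),z)+2\omega(A_{\mu(x)}w,z)$, the vanishing of the first term on $(\im\rho_x)^\perp$, and the identification of $\{y:\lim_{t\to\infty}e^{i\mu(x)t}\cdot y=0\}$ with a sum of eigenspaces of the self-adjoint operator $A_{i\mu(x)}$ are all correct. One step you skip: you identify $S_x^-$ with the negative eigenspace of the Hessian \emph{restricted to the slice}, whereas the lemma refers to the negative eigenspace of the Hessian on $V$. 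To close this you need (i) block-diagonality of $d^2f_x$ with respect to $V=\im\rho_x\oplus(\im\rho_x)^\perp$, which holds because $A_{\mu(x)}$ preserves $\im\rho_x$ when $\rho_x(\mu(x))=0$ and $\im\rho_x$ is a complex subspace, and (ii) the fact that the Hessian has no negative directions along $\im\rho_x$ itself (it vanishes on $\rho_x(\mathfrak{k})$ since $f$ is constant on $K$-orbits, and is nonnegative on $\rho_x(i\mathfrak{k})$ by the convexity of the Kempf--Ness function); point (ii) is a genuine piece of Kirwan's argument, not bookkeeping.

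The real gap is in the constant-rank claim. You argue that $\dim\im\rho_x$ is constant on each component of $C$ ``because it is constant on $K$-orbits,'' but a connected component of $C$ is in general much larger than a single $K$-orbit, and the premise is simply false: for $\C^*$ acting on $\C^2$ with weights $\pm 1$, the component $\{|z_1|=|z_2|\}$ of $\Crit(\|\mu\|^2)$ contains both the origin (where $\im\rho_x=0$) and points with one-dimensional orbits. What saves the lemma is that $\dim S_x^-$ does not depend on $\dim\im\rho_x$ but only on $\dim\rho_x(\mathfrak{g}_-)$, where $\mathfrak{g}_-$ is the negative weight space of $\ad(i\mu(x))$ on $\mathfrak{g}$: since $A_{\mu(x)}x=0$ places $x$ in the zero weight space, equivariance gives $\rho_x(\mathfrak{g}_\lambda)\subset V_\lambda$, the weight spaces are mutually orthogonal, and hence $S_x^-=V_-\ominus\rho_x(\mathfrak{g}_-)$, so $\dim S_x^-=\dim V_--\dim\mathfrak{g}_-+\dim(\ker\rho_x\cap\mathfrak{g}_-)$. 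Your adjoint-orbit observation controls $\dim V_-$ and $\dim\mathfrak{g}_-$, but the constancy of $\dim(\ker\rho_x\cap\mathfrak{g}_-)$ along a component of $C$ is exactly the nontrivial input from Kirwan's description of the critical sets (each component lies in $K\cdot(Z_\beta\cap\mu^{-1}(\beta))$ with its attendant stabilizer analysis), and your proposal does not supply it.
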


Therefore, in a neighbourhood of the critical set, we can identify the negative slice bundle with the normal bundle to the minimising manifold of the critical set via the map $S_C^- \rightarrow V$ given by $(x, y) \mapsto x + y$. Now we prove that the normal bundle is real analytic in $V$, and hence so is $S_C^-$.

\begin{lemma}\label{lem:neg-slice-analytic}
There is a neighbourhood $U$ of $C$ in $S_C^-$ such that for each $y \in U$ there is an open neighbourhood $U'$ of $y$ in $V$ and a finite set of real analytic functions $h_1, \ldots, h_n : U' \rightarrow \R$ such that $U \cap U' = \bigcap_{j=1}^n h_j^{-1}(0)$.
\end{lemma}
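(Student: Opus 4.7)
The plan is to exhibit $S_C^-$ locally as the common zero set of finitely many real analytic functions on $V$. Three steps are needed: (i) real analyticity of $C$ itself, (ii) an analytic description of the negative eigenspace condition via holomorphic functional calculus, and (iii) transferring the resulting equations on $V \times V$ to $V$ through the addition map $\Phi : (x,y) \mapsto x+y$.

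Since $\mu : V \to \mathfrak{k}^*$ is polynomial (it is the moment map of a linear Hamiltonian action), $f = \|\mu\|^2$ and its gradient are real analytic, and hence $C = \Crit_Z(f) \cap f^{-1}(c)$ is cut out in $V$ by real analytic equations. For the second step, write $A(x) \in \End(V)$ for the Hermitian operator on $V$ obtained by infinitesimal action of $i\mu(x) \in \mathfrak{g}$ (Hermitian because the $K$-representation is unitary). By Kirwan's description of the critical sets of $\|\mu\|^2$, on each connected component $C_0$ of $C$ the moment map takes values in a single adjoint $K$-orbit $K\cdot\beta$, and consequently the operators $A(x)$ for $x \in C_0$ are mutually conjugate and share the common real spectrum $\sigma(A_\beta)$, where $A_\beta$ is defined analogously from $i\beta$. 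I would fix a small positively oriented contour $\Gamma \subset \mathbb{C}$ enclosing exactly the strictly negative elements of $\sigma(A_\beta)$; by continuity of the spectrum under analytic perturbation, $\Gamma$ remains disjoint from $\sigma(A(x))$ on some open neighbourhood $N$ of $C_0$ in $V$, so that the Dunford integral
\begin{equation*}
P_-(x) := \frac{1}{2\pi i}\oint_\Gamma (\lambda - A(x))^{-1}\, d\lambda
\end{equation*}
defines a real analytic $\End(V)$-valued function on $N$. On $C_0$, $P_-(x)$ is the orthogonal projection onto the negative eigenspace of $A(x)$, so the condition $\lim_{t\to\infty} e^{i\mu(x)t}y = 0$ is equivalent to the real analytic equation $(I - P_-(x))y = 0$. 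Fixing a basis $u_1,\dots,u_d$ of $\mathfrak{g}$, the orthogonality condition $y \perp \im\rho_x$ becomes the finite analytic system $\langle y, \rho_x(u_i)\rangle = 0$. Together with the defining equations of $C$, these describe $S_C^-$ as a real analytic subset of $V \times V$ in a neighbourhood of $C \times \{0\}$.

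The final step, which I expect to be the main obstacle, is to transfer these defining equations from $V \times V$ to $V$ via the addition map $\Phi : S_C^- \to V$, $(x,y) \mapsto x+y$. The key geometric fact is that $\Phi$ is an analytic embedding in a neighbourhood of the zero section: at each $(x,0)$ its differential is the sum map $T_x C \oplus S_x^- \to V$, and the two summands are complementary because $S_x^-$ lies in a strictly negative eigenspace of the Hessian of $f$ at $x$ while $T_x C$ lies in the kernel of that Hessian (since $f$ is constant on $C$). A real analytic slice construction, using the $K$-equivariant structure of $C$ together with an analytic tubular neighbourhood for the smooth locus of the minimising manifold, then yields an analytic retraction of a neighbourhood of $C$ in $V$ onto $C$ and hence analytic maps $v \mapsto (x(v), y(v))$ with $v = x(v) + y(v)$, $x(v) \in C$, $y(v) \in S_{x(v)}^-$. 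Substituting these into the equations above produces the desired real analytic functions $h_1,\dots,h_n$ on a neighbourhood $U' \subset V$ of any chosen point $y \in U$, with common zero set $U \cap U'$.
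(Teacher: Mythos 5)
Your first two steps are fine, and the Dunford-integral description of the fibre condition $\lim_{t\to\infty}e^{i\mu(x)t}\cdot y=0$ via the spectral projection $P_-(x)$ is a clean, self-contained way to see that $S_C^-$ is an analytic subset of $C\times V$ (the paper instead quotes Kirwan/Hoskins for the structure of the critical sets and strata). The gap is in step (iii). You propose to transfer the equations from $V\times V$ to $V$ by producing ``an analytic retraction of a neighbourhood of $C$ in $V$ onto $C$,'' giving analytic maps $v\mapsto(x(v),y(v))$ with $x(v)\in C$. But $C=\Crit(f)\cap f^{-1}(c)$ is in general a \emph{singular} real analytic set, and analytic (or even smooth) retractions of an open neighbourhood onto a singular set do not exist in general; the tubular neighbourhood theorem only applies to the smooth locus, and your hedge ``for the smooth locus of the minimising manifold'' does not produce a map defined on a neighbourhood of all of $C$ with values in $C$. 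For the same reason your injectivity computation of $d\Phi$ at $(x,0)$, which uses $T_xC\oplus S_x^-$, implicitly assumes $C$ (hence $S_C^-$) is a manifold at $x$. Even granting an injective immersion on the smooth locus, inverting $\Phi$ only gives functions on the image $\Phi(S_C^-)$, not on an open set $U'\subset V$, which is what the lemma requires.

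The paper's proof avoids this by never retracting onto $C$. It retracts onto the Morse \emph{stratum} $S_{K\cdot\beta}=GY_\beta^{\min}$, which is a smooth analytic submanifold of $V$ containing $C$ and whose normal bundle along $C$ is exactly $S_C^-$ (the tangent space of the stratum at a critical point is the sum of the kernel and positive eigenspaces of the Hessian). An analytic tubular neighbourhood projection $\pi$ onto this smooth stratum exists, $C$ is an analytic subset of the stratum, and near $C$ the image of $S_C^-$ in $V$ is precisely $\pi^{-1}(C)$; composing the analytic equations cutting out $C$ with $\pi$ then yields the functions $h_1,\dots,h_n$ directly, with no equations needed in the fibre directions. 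If you replace your retraction onto $C$ by the projection onto the smooth stratum and observe that your equations for the fibre condition become redundant (the whole normal fibre over $x\in C$ is $S_x^-$), your argument closes up and essentially reproduces the paper's.
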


\begin{proof}
When $f = \| \mu \|^2$ is the norm-square of a moment map on a complex vector space, Hoskins \cite{Hoskins14} gives an explicit description of the Morse strata of $f$ based on Kirwan's description of the strata for moment maps on projective varieties in \cite{Kirwan84}. In particular, the strata are submanifolds $S_{K \cdot \beta} = G Y_\beta^{min}$, where $\beta = \mu(x) \in \mathfrak{k}$ for a critical point $x$ (the choice of $\beta$ is unique up to the adjoint action of $K$) and $Y_\beta^{min}$ is an open subset of the analytic set $V_+^\beta := \{ v \in V \, : \, \lim_{t \rightarrow \infty} e^{-i \beta t} \cdot v \, \, \text{exists} \}$. Since the $G$-action is analytic then the strata $S_{K \cdot \beta}$ are also analytic submanifolds. Moreover, the critical set $C_{K \cdot \beta} = S_{K \cdot \beta} \cap f^{-1}(c)$ is a real analytic subset.

Given a critical point $x \in C_{K \cdot \beta}$, there is a neighbourhood $U$ of $x$ and a projection $\pi : U \rightarrow S_{K \cdot \beta}$ such that $U$ is identified with a neighbourhood of the zero section of the normal bundle of $S_{K \cdot \beta}$ at $x$. Since $S_{K \cdot \beta}$ is an analytic submanifold then the normal bundle is analytic and so we can choose the projection $\pi$ to be analytic. 

Since the critical set $C_{K \cdot \beta}$ is analytic, then the preimage of $\pi^{-1} (U \cap C_{K \cdot \beta})$ is analytic. Therefore the negative slice bundle is locally cut out by analytic functions.
\end{proof}

For a fixed finite time $T$, the time $T$ flow defines a diffeormorphism $\phi_T : V \rightarrow V$. Since the flow is defined by the equation $\frac{dx}{dt} = - \grad f(x)$, for which the right-hand side depends analytically on $x$, then as explained in \cite{Nicolaescu10} (see also \cite[Ch. 1]{CourantHilbert89}) it follows from the Cauchy-Kowalevski theorem that the time $T$ solution to the flow depends analytically on the initial condition. This is summarised in the following lemma.

\begin{lemma}\label{lem:finite-flow-analytic}
For each finite $T > 0$, the time $T$ flow $\phi_T$ defines an analytic diffeomorphism $\phi_T : V \rightarrow V$.
\end{lemma}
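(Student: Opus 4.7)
The plan is to reduce the lemma to the classical analytic dependence theorem for ODEs, together with the completeness of the gradient flow established in Section~\ref{sec:moment-map}. Concretely, I would argue as follows. Because the action of $K$ on $V$ is linear and unitary, the moment map $\mu : V \to \mathfrak{k}^*$ is given in real coordinates by homogeneous quadratic polynomials, so $f = \|\mu\|^2$ is a real polynomial on $V$ and $\grad f$ is a polynomial (hence real analytic) vector field on $V$.

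Next I would invoke the analytic version of the existence and uniqueness theorem for ODEs: if $F$ is a real analytic vector field on $V$ and $\phi(x,t)$ denotes the solution of $\dot x = F(x)$ with $\phi(x,0) = x$, then on any interval $[0,T]$ on which $\phi(x,t)$ exists, the map $x \mapsto \phi(x,T)$ is real analytic. This is a standard application of the Cauchy--Kowalevski theorem as invoked in \cite{Nicolaescu10} and \cite[Ch.~1]{CourantHilbert89}, and can alternatively be derived by showing directly that the Picard iteration in the existence proof preserves analyticity. Applied to $F = -\grad f$, this yields real analyticity of $\phi_T$ on its domain.

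Finally I would verify that $\phi_T : V \to V$ is a bijection with inverse $\phi_{-T}$. Completeness of the flow for all $t\in\R$ follows from writing $\phi(x,t) = g(t)\cdot x$ with $g(t)\in G$ satisfying $\tfrac{dg}{dt}g^{-1} = -i\mu(g(t)\cdot x)$ and $g(0)=\id$, together with Sjamaar's properness result \cite[Lemma~4.10]{Sjamaar98}, which confines the flow line to a compact subset of $G\cdot x$ on any finite $f$-interval (this is the same input that was used in the verification of Condition~\eqref{cond:flow-alternative} in Proposition~\ref{prop:conditions1-4}). With the flow globally defined, the identities $\phi_{-T}\circ\phi_T = \id = \phi_T\circ\phi_{-T}$ hold, and the inverse $\phi_{-T}$ is analytic by the same argument, so $\phi_T$ is an analytic diffeomorphism. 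There is no genuine obstacle here: the proof is a straightforward combination of classical ODE analyticity with the polynomiality of $\mu$ in the linear setting and the properness estimate already in play.
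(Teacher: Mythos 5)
Your proof is correct and follows essentially the same route as the paper: the paper's entire justification is the observation that the right-hand side $-\grad f$ of the flow equation is real analytic together with the analytic-dependence-on-initial-conditions theorem (via Cauchy--Kowalevski) as cited in \cite{Nicolaescu10} and \cite[Ch.~1]{CourantHilbert89}. You additionally spell out why $\grad f$ is analytic (polynomiality of $\mu$ in the linear setting) and address completeness of the flow via Sjamaar's properness result, details the paper leaves implicit.
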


Before we prove that Condition \eqref{cond:def-retract} holds for the norm-square of a moment map on an affine variety, we first review some results of Hubbard \cite{Hubbard05} on the analyticity of the unstable manifold for analytic flows which will be used in the proof of Proposition \ref{prop:condition5}.

In \cite{Hubbard05}, Hubbard considers the general case of an analytic flow on a vector space and shows that the unstable manifold of a critical point is analytically isomorphic to the negative eigenspace of the Hessian. In this notation of this paper, given a critical point $x \in C$, Hubbard defines a map on each fibre $\xi_x : S_x^- \cong T_x W_x^- \rightarrow W_x^-$ and shows in \cite[Thm. 6]{Hubbard05} that this is an analytic diffeomorphism in a neighbourhood of $x$. If we use $y$ to denote the coordinate on $S_x^-$ then Hubbard also shows that the power series in $y$ converges absolutely uniformly on this neighbourhood (cf. \cite[Thm. 6]{Hubbard05}) and that the map $\xi_x$ depends analytically on the critical point $x$ (cf. \cite[Thm. 12]{Hubbard05}). 

In order to apply these results in the proof of Proposition \ref{prop:condition5} below, first we complexify $V \hookrightarrow V^\C$ so that the real variables $x$ defining $V$ become complex variables $z$ defining $V^\C$, and with respect to these new variables the time $t$ flow $\phi_t$ (which is real analytic in the variable $x$ by Lemma \ref{lem:finite-flow-analytic}) becomes a flow $\phi_t^\C$ which is complex analytic in the variable $z$. Via the inclusion $V \hookrightarrow V^\C$, fixed points of the flow $\phi_t$ map to fixed points of the complexified flow $\phi_t^\C$. Let $C^\C$ denote the subset of the fixed point set of $\phi_t^\C$ containing the image of a critical set $C \subset V \hookrightarrow V^\C$ and let $W_z^-$ denote the unstable manifold of a critical point $z$. Applying \cite[Thm. 6]{Hubbard05} shows that for each $z \in C^\C$, there is a complex analytic map $\xi_z : T_z W_z^- \rightarrow W_z^-$ in a neighbourhood of $z$, and \cite[Thm. 12]{Hubbard05} shows that $\xi_z$ depends complex analytically on $z$.

If we use $w$ to denote the coordinate on $T_z W_z^-$, then these results show that $\xi_z(w)$ is separately complex analytic in $z$ and $w$, and therefore complex analytic as a function of $(z,w)$ by Hartogs' theorem (see for example \cite[Thm 2.2.8]{Hormander90}). If we restrict to the real locus defined by $(x,y) = (\Re(z), \Re(w))$ then the restriction $\xi_x(y)$ is real analytic as a function of $(x,y)$. Therefore we have proved the following lemma.

\begin{lemma}\label{lem:unstable-analytic}
There is a neighbourhood $U_1$ of the zero section of $S_C^- \rightarrow C$ and a neighbourhood $U_2$ of $C$ in $V$ together with an analytic homeomorphism $\xi : U_1 \rightarrow U_2 \cap W_C^-$.
\end{lemma}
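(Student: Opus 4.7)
The plan is to assemble the fibrewise maps $\xi_x$ from Hubbard's theorems into a single map $\xi$ defined on a neighbourhood of the zero section of $S_C^- \to C$, then verify joint analyticity via the complexification argument already sketched just before the statement, and finally verify that $\xi$ is a homeomorphism onto a neighbourhood of $C$ in $W_C^-$.

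First, I would define $\xi$ pointwise by $\xi(x,y) := \xi_x(y)$ for $(x,y) \in S_C^-$ in a suitable neighbourhood of the zero section. Since $S_C^-$ is a vector bundle over $C$ whose fibre $S_x^- \cong T_x W_x^-$ is the negative eigenspace of the Hessian at $x$, Hubbard's Theorem 6 tells us $\xi_x$ is an analytic diffeomorphism from a neighbourhood of $0 \in S_x^-$ onto a neighbourhood of $x$ in $W_x^-$, and his Theorem 12 tells us that $\xi_x$ depends analytically on $x \in C$. These two properties guarantee separate real-analyticity of $\xi$ in the base and fibre variables.

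The main obstacle is upgrading separate analyticity to joint analyticity of $\xi$ on an open neighbourhood of the zero section, because real-analytic Hartogs fails in general. The fix is the complexification argument outlined in the paragraph preceding the lemma: embed $V \hookrightarrow V^{\C}$ so that the real-analytic gradient flow $\phi_t$ (Lemma \ref{lem:finite-flow-analytic}) extends to a complex-analytic flow $\phi_t^{\C}$; then $C$ is contained in the fixed-point locus $C^{\C}$ of $\phi_t^{\C}$, and Hubbard's results applied to $\phi_t^{\C}$ produce a complex-analytic map $\xi_z^{\C}(w)$ which is separately holomorphic in $z \in C^{\C}$ and $w$. Hartogs' theorem (\cite[Thm. 2.2.8]{Hormander90}) then gives joint holomorphicity, and restricting to the real locus $(x,y) = (\Re z, \Re w)$ yields joint real-analyticity of $\xi$ on a neighbourhood $U_1$ of the zero section.

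Finally I would verify the topological properties. By construction $\xi(U_1) \subset W_C^-$, and for fixed $x \in C$ Hubbard's result gives a local inverse $\xi_x^{-1}$ near $x$; combined with the projection $\pi : U \to S_{K\cdot\beta}$ onto the minimising manifold used in Lemma \ref{lem:neg-slice-analytic} (which identifies a neighbourhood of $C$ in $V$ with a neighbourhood of the zero section of the normal bundle $S_C^- \oplus T C$), one obtains an analytic inverse $\xi^{-1}$ on a neighbourhood $U_2$ of $C$ in $V$ intersected with $W_C^-$, showing $\xi : U_1 \to U_2 \cap W_C^-$ is an analytic homeomorphism. Shrinking $U_1$ and $U_2$ if necessary ensures both sides are open neighbourhoods of their respective zero sections/critical sets.
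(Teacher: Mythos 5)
Your proposal follows the paper's argument essentially verbatim: assemble Hubbard's fibrewise parametrizations $\xi_x$, complexify the flow so that Theorems 6 and 12 of \cite{Hubbard05} give separate complex analyticity in the base and fibre variables, apply Hartogs' theorem to get joint holomorphicity, and restrict to the real locus. The extra paragraph on constructing the inverse is a reasonable (and slightly more explicit) elaboration of what the paper leaves implicit, so the approach is correct and the same as the paper's.
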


\begin{remark}
It is not true in general that a function $f(x,y)$ which is separately real analytic in $x$ and $y$ is then real analytic as a function of $(x,y)$ (for example, consider $f(x,y) = \frac{xy}{x^2+y^2}$ as explained in \cite[p27]{Hormander90}). Therefore we have to apply Hubbard's theorem to the complexification $\phi_t^\C$ of the flow $\phi_t$ to prove the stronger result that our map $\xi_x(y)$ is the restriction of a map which is separately complex analytic in each variable. Then we can apply Hartogs' theorem to show that the map is analytic in both variables. The key is that Hubbard's results apply to the complexification of the flow.
\end{remark}

Now we can prove that Condition \eqref{cond:def-retract} holds for the norm-square of a moment map on an affine variety.

\begin{proposition}\label{prop:condition5}
Let $G$ be a connected, reductive algebraic Lie group, let $V$ be a representation of $G$ and let $Z \subset V$ be an affine $G$-variety. Suppose also that the action of the maximal compact subgroup $K$ on $V$ is Hamiltonian with respect to the standard symplectic structure on $V$. Let $\mu : V \rightarrow \mathfrak{k}^*$ be a moment map for this action and define $f : V \rightarrow \R$ by $f(x) = \| \mu(x) \|^2$. Then Condition \eqref{cond:def-retract} is satisfied for the restriction $f : Z \rightarrow \R$, and the deformation retract can be chosen to be $K$-equivariant.
\end{proposition}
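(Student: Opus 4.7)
The plan is to reduce Condition \eqref{cond:def-retract} to the simpler condition $(5')$ from the introduction, and then to invoke Theorem 1.1 of \cite{pflaumwilkin} to finish the job (including the $K$-equivariance). The verification of $(5')$ breaks into four steps: (i) choose $\varepsilon$, (ii) show that $W_C^-$ is real analytic near $C$, (iii) transport this analyticity to the slice $f^{-1}(c-\varepsilon)$, and (iv) intersect with $Z$ and invoke a standard Whitney stratification theorem for (semi)analytic sets.

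First, Proposition \ref{prop:conditions1-4} shows that the critical values of $f = \|\mu\|^2$ are isolated, so we can pick $\varepsilon > 0$ with no critical values in $[c-\varepsilon, c)$. Next, by Lemma \ref{lem:neg-slice-analytic}, the negative slice bundle $S_C^-$ is a real analytic subset of $C \times V$ in a neighbourhood of the zero section, and by Lemma \ref{lem:unstable-analytic} (an application of Hubbard's analyticity of the unstable manifold, complexified so Hartogs applies) the map $\xi$ identifies a neighbourhood $U_1$ of the zero section of $S_C^-$ analytically with an open neighbourhood $U_2$ of $C$ inside $W_C^-$. Thus there is an open neighbourhood $U_2 \subset V$ of $C$ in which $W_C^- \cap U_2$ is cut out by real analytic equations. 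Shrinking $\varepsilon$ further if necessary, we may assume that $W_C^- \cap f^{-1}([c-\varepsilon, c])$ is contained in $U_2$, so that $W_C^- \cap f^{-1}(c-\varepsilon)$ is a real analytic subset of $V$.

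Now intersect with $Z$. Because $Z \subset V$ is an affine algebraic subvariety, it is real analytic, and so are $f^{-1}(c-\varepsilon) \cap Z$ and $W_C^- \cap f^{-1}(c-\varepsilon) \cap Z$. A standard theorem of \L{}ojasiewicz--Whitney says that any locally finite collection of real analytic subsets of a real analytic space admits a common Whitney stratification; in particular there is a Whitney $(b)$-regular stratification of $f^{-1}(c-\varepsilon) \cap Z$ in which $W_C^- \cap f^{-1}(c-\varepsilon) \cap Z$ appears as a union of strata. This verifies Condition $(5')$. Since $K$ acts analytically on $V$, preserves $Z$, preserves $\mu$ up to the coadjoint action and hence preserves $f$, $C$, and $W_C^-$, every set appearing above is $K$-invariant; a $K$-equivariant refinement of the Whitney stratification exists by the standard equivariant stratification results for compact group actions on analytic sets, so Theorem 1.1 of \cite{pflaumwilkin} delivers the desired $K$-equivariant deformation retract verifying Condition \eqref{cond:def-retract}.

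The main obstacle is step (ii) — proving that $W_C^-$ is genuinely real analytic near $C$, rather than merely $C^\infty$. This is where one has to appeal to Hubbard's theorem together with the complexification/Hartogs trick used in the proof of Lemma \ref{lem:unstable-analytic}; the separate-analyticity in the critical point and in the slice coordinate is not by itself enough. Once this analyticity is in hand, the remaining steps are routine applications of the fact that analytic subsets intersect well and admit (equivariant) Whitney stratifications, together with the black-box Theorem 1.1 of \cite{pflaumwilkin}.
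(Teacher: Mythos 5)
Your overall strategy matches the paper's: establish that $W_C^-$ is locally cut out by real analytic functions near $C$ via Lemmas \ref{lem:neg-slice-analytic} and \ref{lem:unstable-analytic}, transport this to the level set $f^{-1}(c-\varepsilon)$, and then feed the resulting $K$-invariant analytic sets into Theorem 1.1 of \cite{pflaumwilkin}. However, there is a genuine gap in your step (iii). You assert that ``shrinking $\varepsilon$ further if necessary, we may assume that $W_C^- \cap f^{-1}([c-\varepsilon,c])$ is contained in $U_2$.'' This is not available in general: the critical set $C$ of $\|\mu\|^2$ on an affine variety is typically \emph{non-compact} (this is precisely why Condition C fails in this setting, as the paper stresses), and both the neighbourhood $U_2$ from Lemma \ref{lem:unstable-analytic} and the \L{}ojasiewicz constants controlling how far $W_C^-$ spreads before reaching level $c-\varepsilon$ can degenerate along the ends of $C$. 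So no single $\varepsilon>0$ need place all of $W_C^-\cap f^{-1}(c-\varepsilon)$ inside $U_2$, and your claim that $W_C^-\cap f^{-1}(c-\varepsilon)$ is analytic does not follow as written.

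The paper explicitly flags this point (``\emph{a priori} this neighbourhood of the critical set may not intersect $f^{-1}(c-\varepsilon)$'') and repairs it differently: for each individual $y \in W_C^- \cap f^{-1}(c-\varepsilon)$ there is a finite $T<0$ (depending on $y$) with $\phi(y,T)$ in the good neighbourhood $U$, and since the finite-time flow is an analytic diffeomorphism onto its image (Lemma \ref{lem:finite-flow-analytic}), one pulls back the local analytic equations for $W_C^-$ near $\phi(y,T)$ to local analytic equations near $y$. Being cut out by analytic equations is a local condition, so this suffices without any uniform control over $C$. Your proof becomes correct once you replace the shrinking-$\varepsilon$ claim with this pointwise flow-transport argument. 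The remaining differences are cosmetic: you route the argument explicitly through Condition $(5')$ and the \L{}ojasiewicz--Whitney common refinement theorem, whereas the paper applies Theorem 1.1 of \cite{pflaumwilkin} directly to the $K$-invariant analytic sets $W_C^-\cap f^{-1}(c-\varepsilon)\cap Z$ and $f^{-1}(c-\varepsilon)\cap Z$; both routes end in the same place.
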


\begin{proof}
Lemma \ref{lem:neg-slice-analytic} above shows that the negative slice bundle is locally cut out by analytic functions. Lemma \ref{lem:unstable-analytic} shows that (in a neighbourhood of the critical set $C$) there is an analytic homeomorphism $S_C^- \rightarrow W_C^-$ and therefore the unstable set $W_C^-$ is also locally cut out by analytic functions.

\emph{A priori} this neighbourhood of the critical set may not intersect $f^{-1}(c-\varepsilon)$, however for any point $y \in W_C^- \cap f^{-1}(c-\varepsilon)$ there exists a finite $T < 0$ such that $\phi(y, T) \in U$. Lemma \ref{lem:finite-flow-analytic} shows that the finite-time flow defines an analytic diffeomorphism onto its image, and so there exists a neighbourhood $U'$ of $y$ and a collection of functions $h_1, \ldots, h_n$ such that $W_C^- \cap U' = \bigcap_{j=1}^n h_j^{-1}(0)$. Since $f^{-1}(c-\varepsilon)$ is an analytic set, then we can extend the results of the previous paragraph to show that $W_C^- \cap f^{-1}(c-\varepsilon)$ is locally cut out by analytic functions.

Since the spaces $W_C^-$, $f^{-1}(c-\varepsilon)$ and $Z$ are all $K$-invariant analytic subsets of a finite-dimensional affine space $V$, then we can apply Theorem 1.1 of \cite{pflaumwilkin} to $Y = W_C^- \cap f^{-1}(c-\varepsilon) \cap Z$ and $X = f^{-1}(c-\varepsilon) \cap Z$ to show that Condition \eqref{cond:def-retract} holds for the function $f : Z \rightarrow \R$ and that the deformation retract can be chosen to be $K$-equivariant.
\end{proof}

\begin{remark}
In the above proof we use Hubbard's results to prove the existence of an analytic homeomorphism $S_C^- \rightarrow W_C^-$. A different question is whether this map restricts to a homeomorphism $S_C^- \cap Z \rightarrow W_C^- \cap Z$. This does not follow from the methods of \cite{Hubbard05}, however in \cite{wilkin-quiver-flow-hecke} we use the distance-decreasing property of the moment map flow to give a different construction of a map $S_C^- \rightarrow W_C^-$ which does restrict to a homeomorphism $S_C^- \cap Z \rightarrow W_C^- \cap Z$ and which is also $K$-equivariant. In the next section we explain how this fits into a larger picture of using Morse theory to compute topological invariants of quiver varieties.
\end{remark}

\section{Conclusion}

Theorem \ref{thm:main-theorem-Morse-theory} shows that the homotopy type of $Z_a = \{ x \in Z \, : \, f(x) \leq a \}$ changes by attaching a copy of $W_C^-$ as $a$ crosses a critical value $c$.  In order to relate the topological invariants of $Z_a$ and $Z_b$ for $a < c < b$, it is then natural to investigate the topology of the pair $(W_C^-, W_C^- \setminus C)$. 

When $f$ is a Morse function on a manifold $M$ with an appropriate compactness condition (e.g. $f$ satisfies the Palais-Smale Condition C), then one is faced with the same problem, and the solution is to use the Morse Lemma to relate the unstable manifold at a critical point $x$ to the negative eigenspace of the Hessian at $x$. Therefore the pair $(W_x^-, W_x^- \setminus \{x\})$ is homeomorphic to $(\R^\lambda, \R^\lambda \setminus \{0\})$, where $\lambda$ is the dimension of the negative eigenspace of the Hessian at $x$, and so the homotopy type of $Z_a$ changes by attaching a cell of dimension $\lambda$ as $a$ crosses the critical value $f(x)$. On a singular space this procedure fails because the Morse Lemma is no longer available and the definition of the Hessian does not make sense in the absence of the ability to take derivatives.

In this section we explain how to use the results of \cite{wilkin-quiver-flow-hecke} to describe $W_C^-$ in terms of analytic data around the critical set $C$ for the space $Z$ of representations of a quiver with relations. On this singular space the negative eigenspace of the Hessian is replaced by the intersection of $Z$ with the negative slice bundle $S_C^-$ of Definition \ref{def:neg-slice-bundle}, which is defined in terms of the group action and is therefore intrinsic to the singular space. Since the negative slice bundle has an explicit description \eqref{eqn:neg-slice-bundle-def} in terms of analytic data around the critical set then we can develop a procedure to compute topological invariants of the pair $(S_C^-, S_C^- \setminus C)$, which will be carried out in \cite{wilkin-quiver-topology}.

First we recall some basic definitions (cf. \cite{King94}, \cite{Nakajima94}) to set the notation for the rest of the section.
\begin{definition}
A \emph{quiver} $Q$ is a directed graph, consisting of vertices $\mathcal{I}$, edges $\mathcal{E}$, and head/tail maps $h,t : \mathcal{E} \rightarrow \mathcal{I}$.  A \emph{complex representation of a quiver} consists of a collection of complex Hermitian vector spaces $\{ V_i \}_{i \in \mathcal{I}}$, and $\C$-linear homomorphisms $\{ x_a : V_{t(a)} \rightarrow V_{h(a)} \}_{a \in \mathcal{E}}$. The \emph{dimension vector} of a representation is the vector ${\bf v} := ( \dim_\C V_i )_{i \in \mathcal{I}} \in \mathbb{Z}_{\geq 0}^\mathcal{I}$. The vector space of all representations with fixed dimension vector is denoted 
\begin{equation*}
\Rep(Q, {\bf v}) := \bigoplus_{a \in \mathcal{E}} \Hom(V_{t(a)}, V_{h(a)}) .
\end{equation*}
\end{definition}

The groups
\begin{equation*}
K_{\bf v} := \prod_{i \in \mathcal{I}} \U(V_i) \subset G_{\bf v} := \prod_{i \in \mathcal{I}} \GL(V_i, \C)
\end{equation*}
both act on the space $\Rep(Q, {\bf v})$ via the induced action on each factor $\Hom(V_{t(a)}, V_{h(a)})$ 
\begin{equation}\label{eqn:group-action}
( g_i )_{i \in \mathcal{I}} \cdot (x_a)_{a \in \mathcal{E}} := \left( g_{h(a)} x_a g_{t(a)}^{-1} \right)_{a \in \mathcal{E}} . 
\end{equation}
With respect to the standard symplectic structure on $\Rep(Q, {\bf v})$ induced from the symplectic structure on each $V_i$, the action of $K_{\bf v}$ is Hamiltonian with moment map
\begin{align}\label{eqn:moment-map-def}
\begin{split}
\mu : \Rep(Q, {\bf v}) & \rightarrow \mathfrak{k}_{\bf v}^* \\
 (x_a)_{a \in \mathcal{E}} & \mapsto \frac{1}{2i} \sum_{a \in E} [x_a, x_a^*] 
\end{split}
\end{align}
Given any $\alpha \in Z(\mathfrak{k}^*)$, define $f = \| \mu - \alpha \|^2$. In \cite{wilkin-quiver-flow-hecke} we adapt the method of \cite{wilkin-YMH-flow-lines} for Higgs bundles (based on the ``scattering method'' of Hubbard \cite{Hubbard05}) and use the distance-decreasing property of the gradient flow of $f$ to prove the following result.

\begin{theorem}[{\cite[Cor. 4.24]{wilkin-quiver-flow-hecke}}]\label{thm:scattering-homeomorphism}
For any critical set $C$ of $f = \| \mu - \alpha \|^2$ and any closed subset $Z \subset \Rep(Q, {\bf v})$ such that $G_{\bf v} \cdot Z \subset Z$, there is a $K_{\bf v}$-equivariant homeomorphism of pairs $(W_C^- \cap Z, (W_C^- \setminus C) \cap Z) \cong (S_C^- \cap Z, (S_C^- \setminus C) \cap Z)$.
\end{theorem}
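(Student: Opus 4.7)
My plan is to construct a $K_{\bf v}$-equivariant scattering map $\xi : S_C^- \to W_C^-$ via an explicit limiting formula, and then verify that it (a) preserves the $G_{\bf v}$-invariant subvariety $Z$, (b) is a homeomorphism of pairs, and (c) is $K_{\bf v}$-equivariant. Following Hubbard's linearization heuristic \cite{Hubbard05} adapted to the Hamiltonian setting via the methods of \cite{wilkin-YMH-flow-lines}, for each $(x, y) \in S_C^-$ with $\beta := \mu(x) - \alpha$, the proposed formula is
\begin{equation*}
\xi(x, y) := \lim_{n \to \infty} \phi\bigl( x + e^{i n \beta} y, \; n \bigr),
\end{equation*}
where $e^{i n \beta} \in G_{\bf v}$ denotes the complexified one-parameter subgroup generated by $i\beta \in i \mathfrak{k}_{\bf v} \subset \mathfrak{g}_{\bf v}$. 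Because $y \in S_x^-$ lies in the negative weight space of $e^{it\beta}$, the perturbation $e^{in\beta} y$ decays exponentially as $n \to \infty$, so $x + e^{in\beta} y$ is a small perturbation of the critical point $x$; flowing forward by time $n$ approximately inverts the linearized contraction and should recover the unique point on $W_C^-$ whose backward trajectory has leading asymptotic coefficient $y$.

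The two key technical steps are convergence of the sequence $z_n := \phi(x + e^{in\beta} y, n)$ and the fact that the construction preserves $Z$. For convergence, writing $z_{n+k} = \phi(w_{n,k}, n)$ with $w_{n,k} := \phi(x + e^{i(n+k)\beta} y, k)$ and comparing to $z_n = \phi(x + e^{in\beta} y, n)$, the linearization identifies $w_{n,k}$ with $x + e^{in\beta} y$ up to a nonlinear error of order $\|e^{i(n+k)\beta} y\|^2 \lesssim e^{-c(n+k)}$, while the distance-decreasing property of the moment-map gradient flow on the linear representation -- the feature highlighted in the remark following Proposition \ref{prop:condition5} -- uniformly bounds $\|\phi(w_{n,k}, n) - \phi(x + e^{in\beta} y, n)\|$ in terms of $\|w_{n,k} - (x + e^{in\beta} y)\|$. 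This produces a Cauchy sequence with continuous limit. Preservation of $Z$ is then automatic: since $\rho_x(i\beta) = \grad f(x) = 0$ at the critical point $x$, the element $e^{in\beta}$ fixes $x$, so by linearity of the $G_{\bf v}$-action on $V$ one has $x + e^{in\beta} y = e^{in\beta} \cdot (x + y)$, which lies in $Z$ whenever $x + y \in Z$ because $Z$ is $G_{\bf v}$-invariant; the gradient flow $\phi(\cdot, n)$ also preserves $Z$, and closedness of $Z$ in $V$ forces $\xi(x, y) \in Z$.

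To complete the pair-homeomorphism, I would construct the inverse directly by setting $\xi^{-1}(z) = (x, y)$ with $x := \lim_{t \to -\infty} \phi(z, t) \in C$ and $y$ extracted as the leading asymptotic coefficient $y := \lim_{t \to -\infty} e^{-it\beta}\bigl( \phi(z, t) - x \bigr)$; the same linearization and distance-decreasing estimates give convergence, continuity, and the identities $\xi \circ \xi^{-1} = \id$ and $\xi^{-1} \circ \xi = \id$. The $K_{\bf v}$-equivariance of $\xi$ is immediate from the $K_{\bf v}$-equivariance of $\mu$, of the flow, and of the complex structure together with the identity $e^{i n \Ad_k \beta} (k \cdot y) = k \cdot (e^{in\beta} y)$. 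Finally $\xi(x, 0) = x$ and $\xi$ is a local diffeomorphism near the zero section, so $(S_C^- \setminus C) \cap Z$ maps to $(W_C^- \setminus C) \cap Z$. The main obstacle, which Hubbard's argument in \cite{Hubbard05} does not address, is establishing these convergence estimates globally and compatibly with the singular subset $Z$, rather than merely in the smooth local neighbourhood produced by the contraction mapping theorem; the distance-decreasing property of the moment-map gradient flow on a linear representation is precisely what replaces the smooth contraction argument and forces the scattering map to respect both the closedness of $Z$ and the action of $G_{\bf v}$.
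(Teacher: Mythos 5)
The paper does not actually prove this statement: it is imported from \cite[Cor.~4.24]{wilkin-quiver-flow-hecke}, and the surrounding text only records that the cited proof adapts Hubbard's scattering method using the distance-decreasing property of the moment-map flow. Your strategy therefore matches the one the paper attributes to its source, and the two structural points are handled correctly: the identity $x + e^{in\beta}y = e^{in\beta}\cdot(x+y)$ (valid because $\rho_x(i\beta)=0$ at a critical point and the $G_{\bf v}$-action is linear) is exactly what makes the scattering construction respect a closed $G_{\bf v}$-invariant subset $Z$, and the $\Ad_k$-conjugation identity gives $K_{\bf v}$-equivariance.

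The gap is in the convergence step, which is the entire analytic content of the cited corollary. You claim the distance-decreasing property ``uniformly bounds $\|\phi(w_{n,k},n)-\phi(x+e^{in\beta}y,n)\|$ in terms of $\|w_{n,k}-(x+e^{in\beta}y)\|$.'' But the time-$n$ forward flow near $x$ \emph{expands} the unstable directions --- its linearization on $S_x^-$ is $e^{-i\beta n}$, with factors as large as $e^{\lambda_{\max}n}$ --- so no $n$-independent Lipschitz bound of this form can hold; the negative gradient flow of $\|\mu\|^2$ is not a contraction on $V$. A correct argument must either balance the expansion $e^{\lambda_{\max}n}$ against the quadratic smallness $O(e^{-2\lambda_{\min}(n+k)})$ of the nonlinear error, which fails in the resonant case $\lambda_{\max}\ge 2\lambda_{\min}$ that Hubbard's theorem is specifically designed to handle, or else use the distance-decreasing property in its actual (Donaldson-type) form, namely a monotonicity statement for the induced flow on $G_{\bf v}/K_{\bf v}$ comparing the true trajectory with the model trajectory $t\mapsto e^{-i\beta t}\cdot(x+e^{in\beta}y)$; the latter is how \cite{wilkin-YMH-flow-lines} and \cite{wilkin-quiver-flow-hecke} proceed. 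You also leave unverified that the limit lies in $W_C^-$ (that its backward flow converges into $C$), that the convergence is uniform in $(x,y)$ (needed for continuity of $\xi$ and for joint continuity as $x$ varies over $C$), and that the candidate inverse lands in $S_C^-\cap Z$ (this does follow from $e^{-it\beta}\cdot\phi(z,t)\in Z$ together with closedness of $Z$, but it should be said). As written, the proposal is the right plan with the hard estimate asserted rather than proved.
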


This does not follow from the usual Banach fixed point theorem technique for constructing the unstable manifold of a Morse function on a smooth space (see for example \cite[Sec. 6]{Jost05}), since these methods rely on choosing local coordinates to define a projection onto the negative eigenspace of the Hessian at a critical point. Unless one can make this projection map compatible with the singularities in the space $Z$, then it is not obvious that both sides of $S_C^- \cong W_C^-$ remain homeomorphic after intersecting with a singular subset $Z \subset V$.

Theorem \ref{thm:scattering-homeomorphism} shows that for a $G_{\bf v}$-variety $Z \subset \Rep(Q, {\bf v})$, the problem of computing $K_{\bf v}$-equivariant topological invariants of the pair $(W_C^- \cap Z, (W_C^- \setminus C) \cap Z)$ reduces to the same question for the pair $(S_C^- \cap Z, (S_C^- \setminus C) \cap Z)$, which we can describe explicitly in terms of the negative slice at each critical point. This is the analog of the homeomorphism $(W_x^-, W_x^- \setminus \{x\}) \cong (\R^\lambda, \R^\lambda \setminus \{0\})$ for a Morse function on a manifold. 

An important class of examples is when $Z$ is the subvariety of representations satisfying a finite set of relations on the quiver. In the papers \cite{gothenwilkin} and \cite{wilkin-quiver-topology} we will investigate the topology of the pair $(S_C^- \cap Z, (S_C^- \setminus C) \cap Z)$ to derive information about the cohomology groups and low-dimensional homotopy groups of moduli spaces of representations of quivers with relations.


\end{document}